\def\QEDclosed{\mbox{\rule[0pt]{1.3ex}{1.3ex}}} % for a filled box
\def\QEDopen{{\setlength{\fboxsep}{0pt}\setlength{\fboxrule}{0.2pt}\fbox{\rule[0pt]{0pt}{1.3ex}\rule[0pt]{1.3ex}{0pt}}}}
\def\QED{\QEDclosed} % default to closed
\def\QEDhereeqn{\eqno\let\eqno\relax\let\leqno\relax\let\veqno\relax\hbox{\QED}}
\def\QEDopenhereeqn{\eqno\let\eqno\relax\let\leqno\relax\let\veqno\relax\hbox{\QEDopen}}
 \newcommand{\bs}{\boldsymbol}
 \newcommand{\mc}{\mathcal}
 \renewcommand{\emph}{\textit}
 \newcommand{\0}{\bs 0}
 \def\1{{\bs 1}}
 \def\argmin{\mathop{\rm argmin}}
  \def\argmax{\mathop{\rm argmax}}
 \newcommand{\col}{\mathrm{col}}
 \def\diag{\mathop{\hbox{\rm diag}}}
 \def\R{\mathbb{R}}
 \def\I{\mc{I}}
 \def\i{{i\in\mc{I}}}
  \def\J{{\mc{J}}}
  \def\jzero{{j\in\mc{J}_0}}
 \DeclareSymbolFontAlphabet{\mathbbm}{bbold}
 \DeclareSymbolFontAlphabet{\mathbb}{AMSb}%
	 \newcommand\tsup[2][2]{%
	 	\def\useanchorwidth{T}%
	 	\ifnum#1>1%
	 	\stackon[-.5pt]{\tsup[\numexpr#1-1\relax]{#2}}{\scriptscriptstyle\sim}%
	 	\else%
	 	\stackon[.5pt]{#2}{\scriptscriptstyle\sim}%
	 	\fi%
	 }
\newacronym{KKT}{KKT}{Karush--Kuhn--Tucker}
\newacronym{ADMM}{ADMM}{alternating direction method of multipliers}
\newacronym{OPF}{OPF}{optimal power flow}
\newacronym{OPFP}{OPFP}{optimal power flow problem}
\newacronym{NUM}{NUM}{network utility maximization}
\newacronym{LMI}{LMI}{linear matrix inequality}
\newacronym{BMI}{BMI}{bilinear matrix inequality}
\newacronym{LM}{LM}{Lyapunov-Metzler}
\newacronym{SDP}{SDP}{semidefinite programming}
\newacronym{LTI}{LTI}{linear time invariant}
\newacronym{MJLS}{MJLS}{Markov jump linear system}
\newacronym{PID}{PID}{proportional-integral-derivative}
\newacronym{DT}{DT}{discrete-time}
\newacronym{CT}{CT}{continuous-time}
\newacronym{RM}{RM}{rank-minimization}
\let\cl@part\relax \makeatother
 \crefname{thm}{Theorem}{Theorems}
 \crefname{lem}{Lemma}{Lemmas}
 \crefname{cor}{Corollary}{Corollaries}
 \crefname{rem}{Remark}{Remarks}
 \crefname{alg}{Algorithm}{Algorithm}
 \crefname{figure}{Figure}{Figures}
 \crefname{assumption}{Assumption}{Assumptions}
  \crefname{corollary}{Corollary}{Corollaries}
    \crefname{proposition}{Proposition}{Propositions}
    \crefname{problem}{Problem}{Problems}
     \crefname{example}{Example}{Examples}
 \crefname{thmlisti}{Theorem}{Theorem}
 \crefname{lemlisti}{Lemma}{Lemma}
 \crefname{asmlisti}{Assumption}{Assumptions}
 \newlist{thmlist}{enumerate}{1}
 \setlist[thmlist]{label=(\roman{thmlisti}), ref=\thethm(\roman{thmlisti}),noitemsep}
 \newlist{lemlist}{enumerate}{1}
 \setlist[lemlist]{label=(\roman{lemlisti}), ref=\thelem(\roman{lemlisti}),noitemsep}
 \newlist{asmlist}{enumerate}{1}
 \setlist[asmlist]{label=(\roman{asmlisti}), ref=\theassumption(\roman{asmlisti}),noitemsep,nosep,leftmargin=*} %wide=0pt
 \newtheorem{lemma}{Lemma}
 \newtheorem{theorem}{Theorem}
 \newtheorem{remark}{Remark}
 \newtheorem{assumption}{Assumption}
  \newtheorem{corollary}{Corollary}
    \newtheorem{proposition}{Proposition}
    \newtheorem{example}{Example}
\newtheorem{problem}{Problem}
\def\M{\mc{M}}
\def\Mp{\mc{M}_+}
\def\D{\mc{\noiseset}}
\def\C{\mc{C}}
\def\ms{{\hspace{-0.25em}}}
\def\Pit{\tilde P_i}
\def\Pjt{\tilde P_j}
\def\Pbs{\bs{P}_{\ms -i}}
\def\Pibs{\bs{\pi}_{ -i}}
\newcommand{\Metzler}{weight\@\xspace}  % name for the matrix \Pi
\def\noise{w}
\def\dist{\psi}
\def\noiseset{W}
\def\distset{\Psi}
\def\Acl{A^{\textnormal{cl}}}
\def\Acli{{A^{\textnormal{cl}}_i}}
\def\Aclibar{{\bar A^{\textnormal{cl}}_i}}
\def\Aclj{{A^{\textnormal {cl}}_j}}
\def\Ccli{{C^{\textnormal {cl}}_i}}
\def\H{\mc{H}}
\patchcmd{\smallmatrix}{\thickspace}{\kern.5em}{}{}
\newenvironment{smallbmatrix}
  { \left[\begin{smallmatrix}}
  {\end{smallmatrix}\right]}
\newcommand{\setpropositiontag}[1]{% \settheoremtag{<tag>}
  \let\oldtheproposition\theproposition% Store \thetheorem
  \renewcommand{\theproposition}{#1}% Redefine it to a fixed value
%   \g@addto@macro\endproposition{% At \end{theorem}, ...
    % \addtocounter{proposition}{-1}% ...restore theorem counter value and...
    % \global\let\proposition\oldtheproposition}% ...restore \thetheorem
  }
\def\QEDhereeqn{\eqno\let\eqno\relax\let\leqno\relax\let\veqno\relax\hbox{\QED}}
\def\QEDopenhereeqn{\eqno\let\eqno\relax\let\leqno\relax\let\veqno\relax\hbox{\QEDopen}}
\def\vmin{v_{\min}}
\newcommand{\mycomment}[1]{}
\newcommand{\redone}[1]{\mycomment{#1}}
\begin{document}

	\begin{frontmatter}
\title{
Data-driven stabilization of  switched and constrained linear systems\thanksref{footnoteinfo}
}

\thanks[footnoteinfo]{
	This work was partially supported by NWO (OMEGA 613.001.702), by the ERC (COSMOS 802348),  by NSF Award (CMMI-2044900), and by ETH Z\"urich funds.
}% <-this % stops a space

% <-this % stops a space

		\author[aff0]{Mattia Bianchi},
		\author[aff1]{Sergio Grammatico},
		\author[aff2]{Jorge Cort\'es}

        \address[aff0]{Automatic Control Laboratory,  ETH Z\"urich, Switzerland}
		\address[aff1]{Delft Center for Systems and Control, Delft University of Technology, The Netherlands}
		\address[aff2]{Department of Mechanical and Aerospace Engineering, UC San Diego, California}

\maketitle
% \thispagestyle{empty}
% \pagestyle{empty}

%%%%%%%%%%%%%%%%%%%%%%%%%%%%%%%%%%%%%%%%%%%%%%%%%%%%%%%%%%%%%%%%%%%%%%%%%%%%%%%%
\begin{abstract} We consider the design of state feedback control laws for both the switching signal and the continuous input of an unknown switched linear system, given past noisy input-state trajectories measurements. 
Based on Lyapunov-Metzler inequalities and on a matrix S-lemma, we derive data-dependent bilinear programs, whose solution  directly returns a provably stabilizing controller and ensures $\H_2$ or $\H_{\infty}$ performance.
We further present  relaxations  that considerably reduce the computational cost, 
still without requiring stabilizability of any of the switching modes. 
 Finally, we showcase the flexibility of our approach on  the  constrained stabilization problem for an unknown perturbed linear system.
We validate our theoretical findings numerically, demonstrating the favourable trade-off between conservatism and tractability achieved by the proposed relaxations.

\end{abstract}

\end{frontmatter}

% \begin{IEEEkeywords}
% Data-driven control,  linear matrix inequalities, robust control,  sliding mode control, switched systems.
% \end{IEEEkeywords}

%\begin{IEEEkeywords}
%		Optimization algorithms, networked control systems, variational methods
%\end{IEEEkeywords}

%%%%%%%%%%%%%%%%%%%%%%%%%%%%%%%%%%%%%%%%%%%%%%%%%%%%%%%%%%%%%%%%%%%%%%%%%%%%%%%%
\section{Introduction}\label{sec:introduction}

\emph{Direct} data-driven control refers to the design of controllers based only on observed trajectories generated by an unknown dynamical system, without explicitly identifying its parameters\cite{DePersis_Formulas_TAC2020,Dorfler:Coulson:Markovsky:Bridging:TAC:2022,Krishnan:Pasqualetti:DirectvsIndirect:CDC:2021}. Bypassing the identification step  comes with important advantages. First, the computational complexity of system identification is mitigated.  
Second, uncertainty propagation is avoided.  In fact,
measurement noise can lead to inaccurate models:
% the experiments available  can rarely  characterize  the plant exactly, 
% as in practice  measurement noise is unavoidable --potentially leading to inaccurate models.
while it is possible to bound the identification error  and  resort to robust controllers,  this two-step procedure is typically conservative.  Third, frequently control synthesis requires less information on the system (and hence, less data)  than identification of its dynamics \cite{VanWaardeEtal_TAC2020}. 
% \marginJC{Mattia, check out "F. Dorfler, J. Coulson, and I. Markovsky, “Bridging direct and indirect
% data-driven control formulations via regularizations and relaxations,”
% IEEE Transactions on Automatic Control, 2022" and 
% "V. Krishnan and F. Pasqualetti, “On direct vs indirect data-driven predictive control,” arXiv preprint, 2021, arXiv:2103.14936", maybe it'd be worth to refer the reader to them too.}

\emph{Literature review:} Direct data-driven control traces back to the work of Ziegler and Nichols \cite{Ziegler:Nichols:1942} on PID controllers. Classical methods are also virtual reference feedback tuning \cite{Campi:VRFeedbackTuning:AUT:2001}, iterative feedback tuning \cite{Hjalmarsson:IterativeFeedbackTuning:CS:1998}, reinforcement learning \cite{Bradtke:RLlqr:NEURIPS:1993} and extremum seeking \cite{KRSTIC_2000}. Later alternatives include  intelligent PID controllers \cite{Fliess:Join:IntelligentPID:IFAC2009} and  model-free adaptive control \cite{Hou:Jin:ModelFreeAdaptiveControl:TNN:2011}; methods based on Willems' fundamental lemma are currently  enjoying renewed popularity  as well \cite{Coulson:Lygeros:Dorfler:DeePC:ECC2019,DePersis_Formulas_TAC2020, MARKOVSKY2023_persistency_AUT,Berberich:Kohler:Allgower:DataDrivenMPC:TAC2021,RuedaEscobedo:Schiffer:VolterraSystems:CDC2020,Allibhoy:Cortes:MPC:LCSS:2021,BRESCHI2023110961}.

In this paper, we adopt instead a recent robust-control approach, 
whose strength is to provide non-asymptotic theoretical guarantees  \cite{Dai_Semialgebraic_LCSS2020,VanWaardeEtal_TAC2020,BISOFFI_petersen_AUT2022}.
In simple terms, when the true plant is not known, the goal is to find a controller that provably ensures closed-loop stability (or performance/optimality \cite{VanWaardeEtal_Noisy_TAC2022}, safety \cite{Ahmadi:Israel:Topcu:SafeDifferentialInclusions:TAC2020}) for \emph{all the systems compatible} with 
(a) a few, finite, open-loop measured trajectories, possibly  corrupted by noise, and
(b) prior knowledge on the model class (e.g., \gls{LTI} systems \cite{VanWaardeEtal_TAC2020}, polynomial dynamics  \cite{Dai_Semialgebraic_LCSS2020}) and noise bounds.
\color{black}
The controller is typically built by solving a data-dependent optimization problem, for instance a \gls{LMI} \cite{VanWaardeEtal_TAC2020} or a polynomial program  \cite{Dai_Semialgebraic_LCSS2020}. In this stream of literature, much effort is devoted to providing \emph{tractable} conditions for stabilization of several classes of nonlinear systems and under various noise bounds\cite{Strasser:Berberich:Allgower:CDC2021,DePersis:Rotulo:Tesi:Cancellation:arXiv2022,Martin:Allgower:PolynomialApproximations:arXiv2022,Dai_Switched_CDC2018,ROTULO2022_switched_AUT}. 
In particular, the works \cite{Dai_Switched_CDC2018,DAI_switched_AUT2022,ROTULO2022_switched_AUT,Eising_switched_CDC2022,eising2023_switched} focus on the data-driven design of the \emph{continuous} input for \emph{switching} linear systems \cite{Liberzon2003}, where the dynamics  switches \color{black} 
%
% \marginJC{Is "commute" standard in this body of literature? If it is, then I'm ok with the term, otherwise I'd use something different (b/c of the implicit meaning of "order does not matter" in commute, which is not true here.}
%
freely among a set of \gls{LTI} plants (also called ``modes'').  In  \cite{Dai_Switched_CDC2018,DAI_switched_AUT2022}, the switching signal is arbitrary but measured. The authors of \cite{ROTULO2022_switched_AUT,Eising_switched_CDC2022,eising2023_switched} study the case of an unknown  switching signal, with stability  ensured under a sufficiently large dwell time.
%
% \marginJC{Maybe cite Jaap and Shenyu's CDC paper too?}
% \marginMB{Couldn't find it online yet}
%
Both cases require  stabilizability of each subsystem. 

On the contrary, in this paper, we   consider \emph{switched} linear systems --where the active mode is chosen by the controller. In particular, we are interested in ``stabilization by switching'', namely controlling the
system by opportunely choosing the \emph{discrete} switching signal: famously, this can be achieved even when all the individual modes are unstable \cite{Liberzon2003}. Not only switched dynamics naturally arise in prominent engineering applications (robotics, embedded systems, traffic control, power electronics \cite{Baldi:PowerConverters:HS:2018,Deaecto:Networked:TAC2015}), but even for a single plant switching  among different compensators can achieve otherwise impossible control goals \cite[Ch.~9]{Blanchini:Miani:SetTheoretic:2015}, \cite{Breschi_switching_IJC_2020}.
For this reason, stabilization by switching has been extensively investigated (for  a known system)~\cite{Morse:LogicBasedSwitching:1995,Liberzon2003,Fiacchini:Jungers:AUT:2014,Egidio:Deaecto:TAC:2021}. The challenge remains finding good trade-offs between complexity and conservatism: for instance, tight conditions  for  stabilizability of \gls{DT} switched linear systems are known, but computationally prohibitive  \cite{Fiacchini:Jungers:AUT:2014}. In this spirit,  the main tool is the \gls{LM} inequalities, introduced by Geromel and Colaneri \cite{GeromelColaneri_SIAM2006,GeromelColaneri_IJC2006},  that provide sufficient conditions for stability in the form of a \gls{BMI}.
% based on a non-convex Lyapunov function.
% The success of the \gls{LM} conditions has witnessed many reformulations, relaxations and generalizations \cite{Heemels:Kundu:Daafouz:LMinequalities:TAC2017,Geromel:Deacto:Daafouz:Suboptimal:TAC2013}. 

In contrast, \emph{data-driven} stabilization of linear dynamics with controlled switching is essentially unexplored. To the best of our knowledge, the only work to address this problem (under additional dwell-time constraints) is \cite{Kundu:DataDrivenSwitchingLogic:arXiv:2020}. Yet, the design is particularly restrictive (e.g., a specific class of linear systems is considered, and  at least one of the modes must be stable) and the assumption on the \emph{noiseless} data implies that each subsystem can be uniquely identified. \cite{Zhang:Switched:IJSS:2019} also studies  switched systems, but rather considers a finite-horizon control task via a reinforcement learning approach. 

\emph{Contributions:} In this paper, we consider the direct data-driven stabilization of an unknown continuous-time switched linear system. We start by observations of finite input-state-derivative data generated by the system; 
differently from \cite{Kundu:DataDrivenSwitchingLogic:arXiv:2020}, the measurements are subject to noise, obeying an energy-type bound.
% Our main contribution is a set of data-dependent matrix inequalities, whose solution provides 
We provide a method to design a  state-feedback controller (for the discrete switching signal, and for the continuous input if present) that provably stabilizes the unknown dynamics, with guaranteed performance. 
% stabilizes every systems explaining the collected data.
Our results are complementary to those in \cite{Dai_Switched_CDC2018,DAI_switched_AUT2022,Eising_switched_CDC2022,ROTULO2022_switched_AUT}  about stabilization of systems with  uncontrolled switching. We build upon \gls{LM} inequalities  \cite{GeromelColaneri_SIAM2006}, and on a matrix S-lemma, developed in \cite{VanWaardeEtal_Noisy_TAC2022} for data-driven control of linear discrete-time systems, but that found numerous other applications in the field \cite{Guo:DePersis:Tesi:PolynomialNoisy:TAC2021,VanWaarde_QMI_2023}. Compared to \cite{GeromelColaneri_SIAM2006}, we deal with the \emph{set} of systems unfalsified by the data, rather than with a well-known model. Unlike \cite{VanWaardeEtal_Noisy_TAC2022}, we consider continuous-time switched systems, resulting in a substantially different analysis.   The technical novelties of our work are summarized as follows:
\begin{itemize}[leftmargin=*] 
    \item \emph{Data-driven \gls{LM} inequalities}: 
    We derive a \gls{BMI}, dependent only on measurements and prior structural knowledge, whose solution directly supplies a stabilizing controller. The condition is obtained by applying the S-procedure
    \cite{VanWaardeEtal_Noisy_TAC2022} to a dual version of the \gls{LM} inequalities of \cite{GeromelColaneri_SIAM2006}.
    Under a Slater's condition (as in \cite{VanWaardeEtal_Noisy_TAC2022}), our formulation is nonconservative, i.e., it is solvable if and only if there exists a  solution to the \gls{LM} inequalities common to all the systems that are compatible with the data.
    % (under a Slater's qualification as in  \cite{VanWaardeEtal_Noisy_TAC2022}). 
    Crucially, although this set of systems can be unbounded, we prove that it is not restrictive to constraint some variables to be strictly positive --in turn allowing for their inversion, which is fundamental for the dualization of the \gls{LM} inequalities
    % crucially, we prove that certain variables can be taken positive without restriction, which permits us to 
    % \blue{crucially, we prove that it is not restrictrive to consider certain variable to be positive, and we carefully massage the matrix conditions to avoid the additional complexity introduced by the dualization}
    % crucially, we handle the additional complexity (variable inversion) induced by the dualization.
    (Section~\ref{sec:main}); 
    \item \emph{Relaxations}: As for the model-based case, our data-driven \gls{LM} inequalities are nonconvex and challenging to solve. Thus, we propose two relaxations that, at the price of some conservatism, considerably reduce the computational burden.  In particular, the second condition is an \gls{LMI} when a scalar variable is fixed (hence, it can be efficiently solved via \gls{SDP} and line-search), and directly improves on that  in \cite{GeromelColaneri_SIAM2006}, by being both easier to solve and  less restrictive. 
    Importantly, we also provide sufficient conditions  for solvability of our relaxed inequalities, directly in terms of the system properties (Section~\ref{sec:relaxations});
    \item \emph{Performance}: We derive data-dependent \gls{LM} inequalities to solve the  $\mc{H}_2$ and $\mc{H}_\infty$ control problems for unknown switched linear systems (Section~\ref{sec:Extensions:H});
    % then, for the case of an unknown plant, we recast the problem as an optimization program, solely parametrized by data (§\ref{sec:Extensions:H});
    %\item \emph{Discrete-time}: We extend our results to \gls{DT} switched linear systems,
    %where the main challenge is to carefully massage the matrix conditions to avoid the occurrence of  additional nonlinearities(Section~\ref{sec:DiscreteTime});
    % \blue{where we use the Woodbury identity to handle the additional nonlinearities occurring in the matrix conditions}(Section~\ref{sec:DiscreteTime});
    % where the main challenge is the occurrence of additional nonlinearities in the matrix conditions, which we  (Section~\ref{sec:DiscreteTime});
    \item \emph{Safe stabilization}: We  show that the tools developed in this paper are of interest even when the original system is \gls{LTI}. In particular, we design, solely based on data, a switched controller and a robustly invariant set that  guarantee not only robust satisfaction of state (and input) constraints, but also --differently from the results in \cite{Bisoffi:DePersis:Tesi:RobustInvariance:arXiv2020,Luppi:Bisoffi:DePersis:Tesi:SafePolynomial:arXiv2021}-- closed-loop asymptotic stability (Section~\ref{sec:Constrained}). 
    % \item \emph{Numerics}: We provide numerical examples to illustrate our theoretical  findings, with particular attention to the tractability/conservatism tradeoff (\S\ref{sec:Numerics}).
    \end{itemize}

Some background on \gls{LM} inequalities is provided in Appendix~\ref{app:LM}, where we also prove novel results on stabilization of switched systems in the case of sliding motions: this is an extension of general interest, that we then  leverage in our data-driven formulation. Finally, in Section~\ref{sec:Numerics} we illustrate our data-driven methods via numerical examples, with  attention to the tractability/conservatism trade-off.
    
% Breschi: model reference approach, no stability guarantees, different problem (the system switches by itself)

\emph{Notation:} We  use the compact notation $\tilde P\coloneqq P^{-1}$ for the inverse of a matrix $P$. We denote $\Delta\coloneqq \{\lambda \in \R^N_{\geq 0} \mid \1_N^\top \lambda=1\}$ and $\Delta_+\coloneqq \{\lambda \in \R^N_{> 0} \mid \1_N^\top \lambda=1\}$; let $\mc{S}$ ($\mc{S}_+$) be the set of matrices whose columns belong to $\Delta$ ($\Delta_+$). We define the sets of Metzler matrices
$ \M  \coloneqq \{ \Pi  \in  \R^{N\times N} \mid \pi_{i,j}\geq 0 \ \forall i \neq j,  \sum_{i=1}^N  \pi_{i,j}=0 \  \forall j \}$ and 
$\Mp  \coloneqq \{ \Pi  \in  \R^{N\times N} \mid \pi_{i,j}> 0 \ \forall i \neq j, \ \textstyle \sum_{i=1}^N\pi_{i,j} = 0 \ \forall j \}$,
where $\pi_{i,j}$ is the element of $\Pi$ in row $i$ and column $j$. For a signal $s:\R_{\geq 0}\rightarrow \R^m$, its $2$-norm is $\|s \|_2 = \left(\int_{t = 0}^ \infty s(t)^\top s(t) \text{d}t\right) ^{\frac{1}{2}}$; we denote by $\mc{L}_2$ the set of signals with bounded $2$-norm. $e_i \in \R^m$ is the $i$-th column of the identity matrix $I$ of appropriate dimension $m$. $\delta(t)$ denotes the continuous-time unitary impulse. $P \succ 0$ ($\succcurlyeq 0$) denotes a symmetric positive (semi-)definite matrix; $\text{\textlambda}_{\min}(P) $  and $\text{\textlambda}_{\max}(P)$ denote the minimum and maximum eigenvalue of $P$. We may replace the blocks of a matrix that can be deduced by symmetry with the shorthand notation ``$\star$''. 
$\|\cdot\|_\textnormal{F}$ denotes the Frobenius matrix norm.

% \emph{\gls{LM} inequalities:} We review in Appendix~\ref{app:LM} some results on stabilization of switched systems and refine them to cope with sliding motions.

\begin{lemma}[Matrix S-lemma {\cite[Th.~9]{VanWaardeEtal_Noisy_TAC2022}}]\label{lem:S} Let $G,H\in \R^{(k+n)\times (k+n)}$ be symmetric. Consider the following:
\begin{itemize}
    \item[(a)] $\exists \alpha \in \R_{\geq 0} \text{ such that } G-\alpha H \succcurlyeq 0$; 
    \item[(b)]\!\!$\begin{bmatrix}
          I \\ Z
    \end{bmatrix}^{\! \top}
    \!G \begin{bmatrix}
          I \\ Z
    \end{bmatrix} \! \succcurlyeq \! 0, \forall Z \in \R^{n\times k} \text{ s.t. } \begin{bmatrix}
          I \\ Z
    \end{bmatrix}^\top H \begin{bmatrix}
          I \\ Z
    \end{bmatrix}\!\succcurlyeq 0$.
\end{itemize}
%
% \marginJC{I'd spell out the symbol $\mid$ in (b), otherwise it is easy to misunderstand the statement. I guess it means "such that", no? Also, a comma before $\forall Z $, no?}
%
Then $\textnormal{(a)}\Rightarrow\textnormal{(b)}$; if in addition  $\exists \bar Z\in \R^{n\times k}$ such that $ \begin{bmatrix}
          I &  \bar Z^\top 
    \end{bmatrix} H \begin{bmatrix}
          I & \bar Z^\top 
    \end{bmatrix}^\top \succ 0$,  then also $\textnormal{(b)}\Rightarrow\textnormal{(a)}$. \hfill $\square$
\end{lemma}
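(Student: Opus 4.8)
The plan is to handle the two implications by entirely different means: $\textnormal{(a)}\Rightarrow\textnormal{(b)}$ is a one-line convexity computation, while $\textnormal{(b)}\Rightarrow\textnormal{(a)}$ is a separation/duality argument in which the qualification condition is indispensable. I read the qualification as $\begin{bmatrix} I \\ \bar Z \end{bmatrix}^{\top} H \begin{bmatrix} I \\ \bar Z \end{bmatrix}\succ 0$, i.e., strict feasibility of the $H$-form (the $N$ in the statement being a typo for $H$).

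For $\textnormal{(a)}\Rightarrow\textnormal{(b)}$, I would fix any $Z$ feasible for the constraint in (b) and decompose
\[
\begin{bmatrix} I \\ Z \end{bmatrix}^{\top} G \begin{bmatrix} I \\ Z \end{bmatrix} = \begin{bmatrix} I \\ Z \end{bmatrix}^{\top}(G-\alpha H)\begin{bmatrix} I \\ Z \end{bmatrix} + \alpha\begin{bmatrix} I \\ Z \end{bmatrix}^{\top} H \begin{bmatrix} I \\ Z \end{bmatrix}.
\]
The first summand is positive semidefinite because congruence preserves semidefiniteness and $G-\alpha H\succcurlyeq 0$ by (a); the second is positive semidefinite since $\alpha\geq 0$ and $Z$ is feasible. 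Hence the sum is $\succcurlyeq 0$, which is exactly (b). This uses neither the graph structure of $\begin{bmatrix} I \\ Z \end{bmatrix}$ nor the qualification, so I expect it to be routine.

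For $\textnormal{(b)}\Rightarrow\textnormal{(a)}$ I would argue by contraposition. The failure of (a) says that $G$ does not belong to the convex cone $\{\alpha H + P : \alpha\geq 0,\ P\succcurlyeq 0\}$; separating $G$ from this cone produces a symmetric matrix $M$ with $M\succcurlyeq 0$, $\langle M,H\rangle\geq 0$, and $\langle M,G\rangle<0$. The role of the qualification is to rule out the closedness and degeneracy pathologies of this cone and to keep the separation proper, exactly as Slater's condition underwrites strong duality in the scalar S-lemma.

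The main obstacle, and the genuinely matrix-specific difficulty, is to convert the generic semidefinite separator $M$ into a single explicit $Z$ refuting (b): I must exhibit one $Z$ whose graph subspace $\mathrm{range}\begin{bmatrix} I \\ Z \end{bmatrix}$ is $H$-positive-semidefinite yet not $G$-positive-semidefinite. In the scalar S-lemma this step is supplied by Dines's theorem on the convexity of the joint image of two quadratic forms; here one needs its matrix-valued analogue, and the technical heart is a rank reduction of $M$ combined with a perturbation toward the Slater point $\bar Z$ that preserves $H$-feasibility while driving the $G$-form negative, all while ensuring the extracted direction has a nonzero component on the first $k$ coordinates so that it genuinely corresponds to a matrix of the form $\begin{bmatrix} I \\ Z \end{bmatrix}$. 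I expect essentially all of the work to concentrate in controlling this rank and structure reduction.
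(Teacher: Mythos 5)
First, a point of comparison: the paper never proves this lemma at all---it is imported verbatim from \cite[Th.~9]{VanWaardeEtal_Noisy_TAC2022} (and you are right that the $N$ appearing in the Slater condition is a typo for $H$)---so your attempt has to be judged as a self-contained proof. Your argument for $\textnormal{(a)}\Rightarrow\textnormal{(b)}$ is correct and complete: the decomposition $\begin{bmatrix} I \\ Z\end{bmatrix}^{\top}\! G \begin{bmatrix} I \\ Z\end{bmatrix} = \begin{bmatrix} I \\ Z\end{bmatrix}^{\top}\!(G-\alpha H)\begin{bmatrix} I \\ Z\end{bmatrix} + \alpha \begin{bmatrix} I \\ Z\end{bmatrix}^{\top}\! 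H \begin{bmatrix} I \\ Z\end{bmatrix}$ is exactly the standard argument, and it indeed needs neither the graph structure nor the qualification.

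The direction $\textnormal{(b)}\Rightarrow\textnormal{(a)}$, however, contains a genuine gap, one that you locate yourself but do not close. The separation step is fine and can even be made rigorous: under the Slater condition $H$ is positive definite on a $k$-dimensional subspace, hence not negative semidefinite, so the cone $\{\alpha H + P : \alpha \geq 0,\ P \succcurlyeq 0\}$ is closed and proper separation yields $M \succcurlyeq 0$ with $\operatorname{tr}(MH)\geq 0$ and $\operatorname{tr}(MG)<0$. But everything after that is missing, and it is not a routine technicality: passing from this unstructured certificate to a single refuting $Z$---equivalently, showing that whenever a separator exists there is one of the structured form $\begin{bmatrix} I \\ Z\end{bmatrix}xx^\top \begin{bmatrix}I \\ Z\end{bmatrix}^{\top}$ with $\begin{bmatrix}I\\Z\end{bmatrix}^{\top}\! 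H \begin{bmatrix}I\\Z\end{bmatrix}\succcurlyeq 0$---is the entire mathematical content of the matrix S-lemma, and the tool you invoke for it does not exist in the form you need. The joint image of \emph{matrix-valued} quadratic maps $Z \mapsto \begin{bmatrix}I\\Z\end{bmatrix}^{\top}\!A\begin{bmatrix}I\\Z\end{bmatrix}$ is not convex in general (already for a single map the image can consist of rank-one matrices only), so there is no off-the-shelf ``matrix Dines theorem.'' Concretely, a Sturm--Zhang-type rank-one decomposition of $M$ compatible with $\operatorname{tr}(MH)\geq 0$ only produces a \emph{vector} $(\xi,\eta)$ with $[\xi^\top\ \eta^\top] H [\xi^\top\ \eta^\top]^\top \geq 0$ and $[\xi^\top\ \eta^\top] G [\xi^\top\ \eta^\top]^\top<0$; to contradict (b) you must then extend it to a full $H$-feasible matrix $Z$ with $Z\xi=\eta$, and the natural interpolants built from the Slater point, e.g.\ $Z = \eta\xi^\top/\|\xi\|^2+\bar Z(I - \xi\xi^\top/\|\xi\|^2)$, are in general not $H$-feasible, failing on directions that mix $\xi$ with its orthogonal complement. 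This vector-to-matrix (``losslessness'') step is precisely where the proof in \cite{VanWaardeEtal_Noisy_TAC2022} does its real work; without it, your proposal establishes only the easy implication.
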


\section{Problem statement}\label{sec:prob}
We consider a  switched linear system 
\begin{align}\label{eq:system}
    \dot{x} =\bar{A}_\sigma x+ \bar B_\sigma u,
\end{align}
where $x\in \R^n$, $u\in \R^m$ is a  controlled continuous input, $\sigma \in \mc{I} \coloneqq \{1,2,\dots,N\}$ is a  controlled discrete input (switching signal), and $\{\bar A_i\}_{\i}$, $\{ \bar B_i \} _{\i}$ are the system matrices. Here the bar indicates the \emph{true} system matrices, which are  unknown.

Instead, we assume that some experimental data are available, generated by applying  inputs $u$ and $\sigma$, measuring the state $x$ and obtaining an estimate of the derivative, $\dot x +\noise$, for an unknown disturbance $\noise$. In particular,  $T_i \geq  0$ samples have been collected for each subsystem $\i$ (at non switching instants $ \{t^i_k\}_{k=1,\dots,T_i } $, not necessarily from a unique trajectory), and organized in the following matrices:
\begin{align*}
    X_i & \coloneqq \begin{bmatrix} x(t_1^ i) & x(t_2^ i) & \dots & x(t_{T_i}^ i) \end{bmatrix} 
    \\ 
    \dot X_i & \coloneqq  \begin{bmatrix} \dot x(t_1^ i)\!+\! \noise(t_1^ i) & \dot x (t_2^ i)\!+\!\noise(t_2^ i) & \dots & \dot x(t_{T_i}^ i)\!+\! \noise(t_{T_i}^ i) \end{bmatrix} 
    \\ 
    \bar \noiseset_i & \coloneqq  \begin{bmatrix} \noise(t_1^ i) & \noise(t_2^ i) & \dots & \noise(t_{T_i}^ i) \end{bmatrix}
    \\
    U_i & \coloneqq  \begin{bmatrix} u(t_1^ i) & u(t_2^ i) & \dots & u(t_{T_i}^ i) \end{bmatrix},
\end{align*}
that satisfy 
\begin{align}\label{eq:data}
    \dot X_i = \bar A_i X_i + \bar B_i U_i + \bar \noiseset_i,
\end{align}
where the matrix $\bar{\noiseset}_i$ is \emph{unknown}.
\begin{assumption}[Disturbance model]\label{asm:disturbancemodel}
For each $\i$, $\bar \noiseset_i \in \D_i$, where 
\begin{align*}
\D_i \coloneqq \Biggl\{  \noiseset_i \in \R^{n \times T_i} \bigl\vert 
    \begin{bmatrix}
    I \\  \noiseset_i^\top 
    \end{bmatrix}^\top 
    \underbrace{ \begin{bmatrix}
        \Phi_{1,1}^i & \Phi_{1,2}^i \\[0.5em]
        {\Phi_{1,2}^i}^\top & \Phi_{2,2}^i
    \end{bmatrix} }_{\coloneqq \Phi^i}
       \begin{bmatrix}
    I \\  \noiseset_i^\top 
    \end{bmatrix} \succcurlyeq 0 \Biggr\} 
\end{align*}
for known  $\Phi^{i}_{1,1}={\Phi_{1,1}^{i}}^\top$, $\Phi_{1,2}^i$, $\Phi^{i}_{2,2}={\Phi^{i}_{2,2}}^\top \prec 0$. \hfill $\square$
\end{assumption}

By replacing \eqref{eq:data} in \cref{asm:disturbancemodel}, we infer that, for any $\i$, a pair of matrices $(A_i,B_i)$ is consistent with the experiments (i.e., can explain the data) if and only if 
\begin{align}\label{eq:compatibility_ineq}
    \begin{bmatrix}
    I \\ A_i ^\top \\ B_i^\top 
    \end{bmatrix}^{\ms \top} \ms
    \underbrace{\begin{bmatrix}
      I & \dot X_i \\ 
      0 & - X_i \\ 
      0 & - U_i 
    \end{bmatrix}  \ms
     \begin{bmatrix}
        \Phi_{1,1}^i & \Phi_{1,2}^i \\[0.5em]
        {\Phi_{1,2}^i}^\top & \Phi_{2,2}^i
    \end{bmatrix}  \ms
       \begin{bmatrix}
      I & \dot X_i \\ 
      0 & - X_i \\ 
      0 & - U_i 
    \end{bmatrix}^{ \ms \top}}_{\coloneqq H_i} \ms
  \begin{bmatrix}
    I \\ A_i ^\top \\ B_i^\top 
    \end{bmatrix} \ms \succcurlyeq 0, 
\end{align}
where the matrix $H_i$ depends on known quantities only. Let us define, for each $\i$, the compatibility set
\begin{align}
   \ms\ms \C_i & \coloneqq \{(A_i,B_i) \mid  \exists \noiseset_i \in \D_i : \dot X_i =A_i X_i +B_i U_i +\noiseset_i \} \ms   \nonumber  \\
       & \,=\{ (A_i,B_i) \mid \  \eqref{eq:compatibility_ineq} \text{ holds} \}. \label{eq:compatibility_definition} 
\end{align}
% (with $\C_i := \R^{n\times n} \times \R^{n\times m}$ if $T_i=0$).
Clearly, $(\bar A_i, \bar B_i)\in \C_i $; however, the true system matrices cannot  be discerned via the available data. 
Thus, to ensure stability of the true system, we need to find a controller that would stabilize every plausible plant.

\begin{problem}\label{prob:main} Under \cref{asm:disturbancemodel}, find state-feedback laws for the input $u$ and the switching signal $\sigma$ that stabilize the system in  \eqref{eq:system} (with $\bar{A}_\sigma$, $\bar B_\sigma$ replaced by $A_\sigma$, $B_\sigma$) for any set of matrices compatible with the measured data in \eqref{eq:data}, i.e., for all $(A_i,B_i)_{\i} \in \C \coloneqq \C_1 \times \C_2 \times \dots \times  \C_N$, with $\mc{C}_i$ given in~\eqref{eq:compatibility_definition}.  \hfill $\square$
\end{problem}

\begin{remark}[On the disturbance model] 
\label{rem:alternative_disturbance}
The unknown-but-bounded model in \cref{asm:disturbancemodel} is standard in the recent literature \cite[Asm.~1]{Strasser:Berberich:Allgower:CDC2021}, \cite[Asm.~1]{VanWaardeEtal_Noisy_TAC2022}, \cite[Asm. 2]{depersis2022eventtriggered}, \cite[Eq. 6]{eising2023_switched}. It is an energy-like bound. The matrix $\Phi^i$ is chosen by the designer to encode (or overapproximate) the prior bounds available on the disturbance, for instance bounds on the noise-to-signal ratio,  sample covariance or individual samples (see \cite{VanWaarde_QMI_2023} for a detailed discussion).
% we refer to the cited works for details.
For instance, if the prior  on the disturbance is given by instantaneous bounds on the norm of $\noise_i$, i.e., $\sup_t \|\noise_i(t)\|^2\leq \overline{\noise}_i$, then  \cref{asm:disturbancemodel} holds by choosing $\Phi^i_{1,1} = T_i \overline \noise_i I$, $\Phi^{1}_{1,2}=0$, $\Phi^i_{2,2} = -I$
(a tighter upper bound on the compatibility sets in the form \eqref{eq:compatibility_ineq}, for some data-dependent matrix $H_i$, could also be computed as in \cite[Eq.~18]{Luppi:Bisoffi:DePersis:Tesi:SafePolynomial:arXiv2021}, by solving a data-dependent program). 
Moreover, \cref{asm:disturbancemodel} also comprises the case of zero disturbance, by choosing $\Phi_{1,1}^i$ and $\Phi_{1,2}^i$ as zero matrices (in turn, $\C$ can be a singleton if the data are rich enough, i.e., the system \eqref{eq:system} is identifiable). 
Finally, we interpret $\noise$ as  noise on the derivative estimate. First, this estimate can be obtained without actually measuring the derivative, for example by Euler approximation (with guaranteed error bound satisfying \cref{asm:disturbancemodel} \cite{depersis2022eventtriggered}); in fact, one can also obtain data of the form \eqref{eq:data} without numerical derivation, via an integral form, see \cite[App.~A]{depersis2022eventtriggered}. Second, it is actually irrelevant how $\noise$ is generated, as long as \eqref{eq:data} and \cref{asm:disturbancemodel} hold. On this basis,
we can also account for process disturbance (as in \cref{sec:Extensions:H}), or inaccuracy on the off-line state measurement $X_i$, see  \cite[Rem.~2] {depersis2022eventtriggered} (although we assume exact on-line state measurement for the implementation of our controllers, as in  the recent data-driven literature \cite{DePersis_Formulas_TAC2020,Dai_Semialgebraic_LCSS2020,depersis2022eventtriggered}; the case of online noisy measurements remains challenging, even for a perfectly known system). \color{black} \hfill$\square$
\end{remark}

\begin{remark}[State measurement]
We assume that measurements are collected about the state, as in virtually all the data-driven literature that takes the robust-control approach \cite{DePersis:Rotulo:Tesi:Cancellation:arXiv2022,Dai_Semialgebraic_LCSS2020,Strasser:Berberich:Allgower:CDC2021}. 
Dealing with input-output data is an important open problem, partially settled only for \emph{discrete-time} \emph{linear} systems \cite{DePersis_Formulas_TAC2020,VanWaarde_QMI_2023}.
\hfill $\square$
\end{remark}

% \begin{remark}
% Although we focus in the switching signal $\sigma$, for the sake of generality, in \eqref{eq:system} we also allow for a continuous input $u$; we retrieve an autonomous switched system for $m=0$  (while $\I={1}$n gives an \gls{LTI} model). \hfill$\square$
% \end{remark}
% \begin{remark}[Autonomous case] 
% Although we are mostly interested in the design of the switching signal $\sigma$, we also included a continuous input in \eqref{eq:system} for the sake og 

% \end{remark

%%%%%%%%%%%%%%%%%%%%%%%%%%%%%%%%%%%%%%%%%%%%%%%%%%%%%%%%%%%%%%%%%%%%%%%%%%%%%%%%%%%%%%%%%%%%%%%%%%%%%%%%%%%%%%%%%%%%%%%%%%%%%%%%%%%%%%%%%%
%%%%%%%%%%%%%%%%%%%%%%%%%%%%%%%%%%%%%%%%%%%%%%%%%%%%%%%%%%%%%%%%%%%%%

%  \addtocounter{equation}{-1}

 \section{Data-driven stabilization}\label{sec:main}
 
 For the continuous input $u$, we restrict our attention to linear controllers of the form $u=K_\sigma x$, where $\{K_i\}_{\i}$ are feedback gains to be designed. For brevity, we define \begin{align}\label{eq:Acl} \Acl_i\coloneqq A_i+B_i K_i.
\end{align}
To solve \cref{prob:main} in a data-driven fashion, we aim at leveraging \cref{prop:LMstability} in Appendix~\ref{app:LM}. In particular, if we can find  $\{K_i\}_{\i}$,  $\{P_i \succ 0\}_\i$, $Q\succcurlyeq 0$ and $\Pi \in \M $ such that for all $\i$,  for all  $(A_i,B_i)\in \C_i$,
%
% \marginJC{Why do we write the parentheses and no commas in "$(\forall  \i) ( \forall (A_i,B_i)\in \C_i)$"?}
%
% \begin{align}\label{eq:LM1}
%     \hspace{-0.7em}  (A_i+B_iK_i)^\top P_i + P_i (A_i+B_iK_i) +  \underset{{j\in \mc{I}}}{\textstyle \sum} \pi_{j,i} P_j+Q \prec  0,\hspace{-0.3em} 
% \end{align}
\begin{align}\label{eq:LM1} 
\Acli ^\top P_i + P_i \Acli + \textstyle \sum_{j\in \mc{I}}  \pi_{j,i} P_j+Q \prec  0,
\end{align}
 then \cref{prop:LMstability} ensures that the controller 
 \begin{align}\label{eq:control}
     \sigma(x) = \min \left\{ \textstyle \argmin_{\i} \ x^\top P_i x \right\}, \ u(x) = K_{\sigma(x)} x,
 \end{align}
 \begin{figure*}[!h]
\begingroup%
\thinmuskip=0mu plus 1mu
\medmuskip=0mu plus 2mu
\thickmuskip=1mu plus 3mu
\begin{align}
\label{eq:main}
\tag{C1}
 \begin{bmatrix}\\[-0.8em]
        \tilde Q & 0 & \tilde P_i & 0 & 0
        \\
        0& 
        \tilde{\bs{\pi}}_{-i}\tilde {\bs{P}}_{\ms -i} & (\1 \otimes I)\tilde P_i & 0& 0 
        \\
        \tilde P_i &  \Pit (\1_{N-1} \otimes I_n)^\top   &  -\pi_{i,i}\Pit  & -\Pit & -L_i^\top  \\
      0 & 0&  -\Pit & 0 & 0 \\
      0& 0&  -L_i & 0 & 0
    \end{bmatrix}
- \alpha_i \begin{bmatrix}
      0& 0 \\
      0 & 0 \\
      I & \dot X_i \\ 
      0 & - X_i \\ 
      0 & - U_i 
    \end{bmatrix}  
       \begin{bmatrix}
        \Phi_{1,1}^i & \Phi_{1,2}^i \\[0.5em]
        {\Phi_{1,2}^i}^\top & \Phi_{2,2}^i
    \end{bmatrix} 
       \begin{bmatrix}
      0& 0 \\
      0 & 0 \\
      I & \dot X_i \\ 
      0 & - X_i \\ 
      0 & - U_i 
    \end{bmatrix}^\top \succcurlyeq 0 
% \\
% \begin{bmatrix}
%         (Q+ (\1 \otimes I)^\top \Pibs  \Pbs (\1 \otimes I) )^{-1}  &\Pit & 0 & 0 \\ 
%       \Pit&  -\pi_{i,i}\Pit  & -\Pit & -\Pit K_i^\top \\
%       0&  -\Pit & 0 & 0 \\
%       0&  -K_i \Pit & 0 & 0
%     \end{bmatrix}
\end{align}
\hrule
\endgroup%
\end{figure*}
 asymptotically stabilizes \eqref{eq:system}, with guaranteed performance $\int_{0}^{\infty} \ x^\top Q x \ \text{d}t < \min_{\i} x(0)^\top P_i x(0)$. Note that we look for a  \emph{common} \Metzler matrix $\Pi$ for all the systems in the compatibility set $\C$, akin to most methods in the  literature that seek a common Lyapunov function \cite{VanWaardeEtal_Noisy_TAC2022,Strasser:Berberich:Allgower:CDC2021} --instead, $K_i$ and $P_i$ \emph{must} be common to all systems  in $\C_i$ for  \eqref{eq:control} to be implementable.

 In place of \eqref{eq:LM1}, we consider the following related problem, more suitable for our data-driven formulation because of the strict conditions on $Q$ and $\Pi$, as shown later.

%  for technical reasons that will be clear later on, we consider the following  related problem: 

 \begin{problem}\label{prob:LM}
 Find $\{K_i\}_\i$, $\{P_i\succ 0\}_{\i}$, $Q \succ 0$, $\Pi \in \Mp$ such that for all  $\i$, for all $(A_i,B_i)\in \C_i$,
 \begin{align}\label{eq:LM2}
\Acli^\top P_i + P_i \Acli + \textstyle \sum_{j\in \mc{I}} \pi_{j,i} P_j+Q \preccurlyeq 0. \hspace{-0.3em}
\end{align} 
\hfill $\square$
 \end{problem}
Note that a solution to \eqref{eq:LM2} immediately provides a solution to \eqref{eq:LM1} (e.g., with $Q=0$ in \eqref{eq:LM1}). Thus implementing \eqref{eq:control} with matrices $\{P_i,K_i\}_\i$ that satisfy \eqref{eq:LM2} does stabilize \eqref{eq:system} and assures that
\begin{align}\label{eq:performance1}
 \int_{0}^{\infty} \ x^\top Q x \ \text{d}t \leq \min_{\i} x(0)^\top P_i x(0).
\end{align}
Conversely, we wonder if feasibility of \eqref{eq:LM1} implies feasibility of \eqref{eq:LM2}. When the compatibility set $\C$ is a singleton (i.e., if the real system is known), or more generally when $\C$ is compact, this is immediately true, due to the strict inequality in \eqref{eq:LM1}. 
% it is less so when some of the sets $\C_i$ are unbounded. 
The following result shows that \eqref{eq:LM2} is nonrestrictive (i.e., \eqref{eq:LM1} can be bounded away from zero) even in case of unbounded compatibility sets $\C_i$'s.

\begin{lemma}[Equivalent \gls{LM} conditions]
\label{lem:strictnonstrict}
The system of inequalities in \eqref{eq:LM1} is feasible if and only if the system of inequalities in \eqref{eq:LM2} is feasible. \hfill $\square$
\end{lemma}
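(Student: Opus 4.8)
The plan is to prove the two implications separately, with essentially all the work in the converse. The ``if'' direction ($\eqref{eq:LM2}\Rightarrow\eqref{eq:LM1}$) is the observation already made before the statement: if $(\{K_i\},\{P_i\},Q,\Pi)$ solves \eqref{eq:LM2} with $Q\succ0$ and $\Pi\in\Mp$, then the left-hand side of \eqref{eq:LM2} without the term $Q$ equals $\preccurlyeq -Q\prec0$, so the same $\{K_i\},\{P_i\}$ together with the choices $Q=0\succcurlyeq0$ and $\Pi\in\Mp\subseteq\M$ solve \eqref{eq:LM1}. So I would state this in one line and move on.

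For the converse, I would assume \eqref{eq:LM1} feasible, say with $(\{K_i\},\{P_i\succ0\},Q\succcurlyeq0,\Pi\in\M)$, and abbreviate $M_i(A_i,B_i):=\Acli^\top P_i+P_i\Acli+\sum_{j\in\I}\pi_{j,i}P_j+Q$, a symmetric matrix that is affine in $(A_i,B_i)$ and satisfies $M_i(A_i,B_i)\prec0$ for all $(A_i,B_i)\in\C_i$. The idea is to keep $\{K_i\},\{P_i\}$ and reach \eqref{eq:LM2} by the perturbations $Q\mapsto Q+\eta I$ (to make it strictly positive) and $\Pi\mapsto\Pi^\delta$, where $\Pi^\delta$ raises every off-diagonal entry of $\Pi$ by $\delta>0$ and lowers each diagonal entry by $(N-1)\delta$ (so $\Pi^\delta\in\Mp$ with zero column sums). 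These changes shift $M_i$ by the $(A_i,B_i)$-independent term $E_i:=\delta\sum_{j\neq i}(P_j-P_i)+\eta I$, of norm $O(\delta+\eta)$. Hence \eqref{eq:LM2} will follow for small $\delta,\eta$ provided I can establish the uniform margin $\sup_{(A_i,B_i)\in\C_i}\lambda_{\max}(M_i(A_i,B_i))<0$ for each $i\in\I$.

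This uniform margin is the only real obstacle, because $\C_i$ may be unbounded, and a priori the strict inequality could degrade to $0$ in the limit. I would resolve it with a lineality argument. First, because $\Phi_{2,2}^i\prec0$, the set $\D_i$ in \cref{asm:disturbancemodel} is a bounded ellipsoid. Second, any direction $(\Delta A,\Delta B)$ with $\Delta A\,X_i+\Delta B\,U_i=0$ leaves the right-hand side of \eqref{eq:data} untouched, so $(A_i+t\Delta A,B_i+t\Delta B)\in\C_i$ for all $t\in\R$ with the \emph{same} witness disturbance; such directions span full lines inside $\C_i$. Imposing $M_i\prec0$ along such a line and letting $t\to\pm\infty$ forces the linear-in-$t$ coefficient $(\Delta A+\Delta B K_i)^\top P_i+P_i(\Delta A+\Delta B K_i)$ to vanish, so $M_i$ is constant along these directions and depends on $(A_i,B_i)$ only through $R_i:=A_iX_i+B_iU_i$. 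As $(A_i,B_i)$ ranges over $\C_i$, the matrix $R_i$ ranges exactly over $\mathcal{V}_i\cap(\dot X_i-\D_i)$, where $\mathcal{V}_i$ is the (closed) range of the linear map $(A,B)\mapsto AX_i+BU_i$; being the intersection of a subspace with a compact set, this is compact. Therefore $\lambda_{\max}(M_i)$ attains its maximum over $\C_i$, and that maximum is $<0$, which yields the margin $-2\rho_i<0$.

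Finally, I would conclude by taking $\rho:=\min_{i\in\I}\rho_i>0$ (a minimum over finitely many positive numbers) and choosing $\delta,\eta>0$ small enough that $\|E_i\|\le\rho$ for every $i$. Then for all $i\in\I$ and all $(A_i,B_i)\in\C_i$ the perturbed left-hand side obeys $M_i(A_i,B_i)+E_i\preccurlyeq(-2\rho I)+\rho I=-\rho I\preccurlyeq0$, while $P_i\succ0$, $Q+\eta I\succ0$ and $\Pi^\delta\in\Mp$; this is exactly a feasible point of \eqref{eq:LM2}. The hard part, as anticipated, is the uniform margin over a possibly unbounded $\C_i$; once the lineality/factoring step reduces the relevant supremum to one over a compact set, everything else is routine bookkeeping.
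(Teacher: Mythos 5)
Your proof is correct, and its skeleton matches the paper's: both handle the only nontrivial direction by establishing a uniform negative margin for the left-hand side of \eqref{eq:LM1} over the possibly unbounded set $\C_i$, and then perturbing $Q\mapsto Q+\eta I$ and $\Pi$ into $\Mp$. The key mechanism is also shared: full lines inside $\C_i$ (generated by the kernel of $(A,B)\mapsto AX_i+BU_i$, with the same disturbance witness) together with the strict inequality force the linear-in-$t$ coefficient $(\Delta A+\Delta B K_i)^\top P_i+P_i(\Delta A+\Delta B K_i)$ to vanish, and compactness of $\D_i$ (from $\Phi^i_{2,2}\prec 0$) does the rest. Where you diverge is the intermediate conclusion drawn from this vanishing. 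The paper pushes further: using an auxiliary direction $(\bar A_i',\bar B_i')=((\bar A_i^0+\bar B_i^0\bar K_i)^\top \bar A_i^0,\,(\bar A_i^0+\bar B_i^0\bar K_i)^\top \bar B_i^0)$ and a nilpotency/positive-definiteness argument, it shows that every homogeneous solution satisfies $\bar A_i^0+\bar B_i^0\bar K_i=0$, i.e., the feedback annihilates the kernel, so the closed-loop set $\mc{C}_i^{\textrm{cl}}=\{(A_i+B_i\bar K_i)^\top\mid (A_i,B_i)\in\C_i\}$ is itself compact. You instead settle for the weaker statement that the LM1 left-hand side is \emph{constant} along kernel directions, hence factors through the residual $R_i=A_iX_i+B_iU_i$, which ranges over the compact set $\mc{V}_i\cap(\dot X_i-\D_i)$; this suffices for the margin and spares you the paper's auxiliary-direction trick, so your route is arguably leaner for the lemma itself. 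What the paper's stronger conclusion buys is a byproduct used elsewhere: boundedness of the closed-loop set is exactly what the paper invokes (in the remark following the lemma) to derive the necessary rank condition \eqref{eq:rankcondition} on the data; your invariance-only argument would not directly deliver that. One cosmetic caveat: when some $T_i=0$ (so $\C_i$ is the whole space), your forced-vanishing conclusion is contradictory, which simply means \eqref{eq:LM1} is infeasible and the implication holds vacuously; it would be worth one sentence to note this, as the paper's convention explicitly allows $T_i=0$.
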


The proof of this result is given in Appendix~\ref{app:lem:strictnonstrict}.

% The proof is found in appendix and bounds \eqref{eq:LM2} away from zero at a solution, even when the sets $\C_i$ are unbounded: for instance, in discrete time linear systems, a similar derivation gives an upper bound on the convergence rate in \cite[Th.14]{VanWaardeEtal_Noisy_TAC2022}.

To impose \eqref{eq:LM2} for all systems in the compatibility set, we  use the S-procedure \cite{VanWaardeEtal_Noisy_TAC2022}. First, we need to ``dualize'' \eqref{eq:LM2} (to have $A_i$,$ B_i$ as factors on the left, and their transpose on the right, as in \eqref{eq:compatibility_ineq}). By left- an right-multiplying both sides of \eqref{eq:LM2} by $\Pit=P_i^{-1}\succ 0$, we obtain the equivalent inequality
\begin{align}\label{eq:dualform}
    \begin{bmatrix}
    I \\ A_i ^\top \\ B_i^\top
    \end{bmatrix}^{\! \! \top} \ms 
       \underbrace{ \ms  \ms \begin{bmatrix}
        -\Pit (Q+\underset{j\in\mc{I}}{\textstyle \sum} \pi_{j,i} P_j ) \tilde P_i & -\Pit & -\Pit K_i^\top \\[1em]
        -\Pit & 0 & 0 \\
        -K_i \Pit & 0 & 0
    \end{bmatrix} \ms \ms}_{\coloneqq G_i} \ms \!  
  \begin{bmatrix}
    I \\ A_i ^\top \\ B_i^\top 
    \end{bmatrix} \ms \! \! \succcurlyeq 0.
\end{align}
Then  \cref{lem:S} shows that \eqref{eq:dualform} holds for all $(A_i,B_i)$ satisfying \eqref{eq:compatibility_ineq} if there is a scalar $\alpha_i\geq 0$ such that 
\begin{equation}\label{eq:Slemmacompact}
    G_i - \alpha_i H_i \succcurlyeq 0;
\end{equation}
necessity also holds under the mild Slater's condition
\begin{align}\label{eq:Slaters}
\exists Z_i \in \R^{(n+m) \times n} \text{ s.t. } \begin{bmatrix}
  I & Z_i^\top 
\end{bmatrix}^\top 
H_i
\begin{bmatrix}
  I &  Z_i^\top 
\end{bmatrix}^\top \succ 0.
\end{align}
% Note that the dualization in \eqref{eq:dualform} generates an additional nonlinearity, since, in the upper left block of $G_i$,  $P_i$ and $P_j$ and $\pi_{i,j}$ are all variables; we cope with this complication using the  Schur complement.
Note that, in the top-left corner of  $G_i$ in \eqref{eq:dualform},  $\tilde P_i$ and $P_j$ and $\pi_{i,j}$ are all variables. To  cope with this complication --that arises due to the coupling in \eqref{eq:LM_original} and is absent in problems for \gls{LTI} systems \cite{VanWaardeEtal_Noisy_TAC2022}-- we exploit \cref{lem:strictnonstrict} and a Schur complement argument.
% and the Woodbury identity, which we hereby recall:
% \begin{align}\label{eq:Woodbury}
% (D+UCV)^{-1}=\tilde D-\tilde DU(\tilde C +V\tilde D U)^{-1}V \tilde D,
% \end{align}
% for any suitably invertible and sized matrices $C,D,U,V$. 
The following is our main result of this section; let us define, throughout the paper,\color{black}
\begin{subequations}
\begin{align}
    \Pbs \coloneqq \diag ((P_{j})_{j\in \I \backslash \{i\}}), \\ \Pibs \coloneqq \diag ((\pi_{j,i}I_n)_{j\in \I \backslash \{i\}}).
\end{align}
\end{subequations}
\begin{theorem}[Data-driven \gls{LM} inequalities] \label{th:main}
For all $\i$, let the Slater's condition in \eqref{eq:Slaters} hold. Then, $(\{P_i\succ 0,K_i\}_\i,\Pi \in \Mp,Q\succ 0)$ solve \cref{prob:LM} if and only if there exist scalars $\{\alpha_i\geq 0 \}_{\i}$ such that
$(P_i,L_i\coloneqq K_i \tilde P_i \}_\i,\Pi, Q,\{\alpha_i\}_{\i})$ verify the inequality \eqref{eq:main} on top of this page, for all $\i$. \hfill $\square$
\end{theorem}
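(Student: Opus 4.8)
The plan is to prove the theorem by chaining three equivalences, reducing \cref{prob:LM} to the single matrix inequality \eqref{eq:main}, one index $\i$ at a time. First I would record the equivalence already set up in the text: since $\Pit=P_i^{-1}\succ 0$, left- and right-multiplying \eqref{eq:LM2} by $\Pit$ is a congruence, so for a fixed pair $(A_i,B_i)$ the inequality \eqref{eq:LM2} holds if and only if the dualized inequality \eqref{eq:dualform} holds. Imposing this for all $(A_i,B_i)\in\C_i$ therefore amounts to requiring that \eqref{eq:dualform} hold for every matrix satisfying the compatibility inequality \eqref{eq:compatibility_ineq}, which is exactly the hypothesis of the Matrix S-lemma (\cref{lem:S}) with $G=G_i$ and $H=H_i$. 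The Slater-type condition \eqref{eq:Slaters} supplies the regularity for the converse implication of \cref{lem:S}, so robust feasibility of \eqref{eq:dualform} over $\C_i$ is \emph{equivalent} to the existence of $\alpha_i\geq 0$ with $G_i-\alpha_i H_i\succcurlyeq 0$, that is, to \eqref{eq:Slemmacompact}.

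Second, I would substitute $L_i=K_i\Pit$ into $G_i$. Since $P_i$ is symmetric, $\Pit K_i^\top=L_i^\top$, so the two off-diagonal blocks of $G_i$ are expressed through $L_i$, and the sole remaining nonlinearity is the top-left block $-\Pit(Q+\sum_{\j}\pi_{j,i}P_j)\Pit$. Splitting off the $j=i$ term and using $\Pit P_i\Pit=\Pit$, I would rewrite this block as $-\Pit Q\Pit-\pi_{i,i}\Pit-\sum_{j\neq i}\pi_{j,i}\Pit P_j\Pit$.

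Third, and this is the crux, I would exhibit $G_i-\alpha_i H_i$ as the Schur complement of the leading block of the matrix in \eqref{eq:main}. Partition that matrix into the $2\times2$ leading block $\diag(\tilde Q,\ \tilde{\bs\pi}_{-i}\tilde{\bs P}_{-i})$, of sizes $n$ and $(N-1)n$, and the trailing $(2n+m)\times(2n+m)$ block carrying $\Pit$, $L_i$, $\pi_{i,i}$. The leading block is positive definite precisely because $Q\succ 0$, $P_j\succ 0$ and $\Pi\in\Mp$ forces $\pi_{j,i}>0$ for $j\neq i$; this is exactly why \cref{prob:LM} is stated with $Q\succ 0$ and $\Pi\in\Mp$ rather than with the nonstrict data of \eqref{eq:LM1}, and it is what makes the Schur-complement step an equivalence rather than a mere implication. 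Computing the Schur complement, the coupling through $\tilde Q$ with the off-diagonal $\Pit$ contributes $-\Pit Q\Pit$, while the coupling through $\tilde{\bs\pi}_{-i}\tilde{\bs P}_{-i}$, whose inverse is $\Pibs\Pbs=\diag((\pi_{j,i}P_j)_{j\neq i})$, with the off-diagonal $(\1_{N-1}\otimes I)\Pit$ contributes $-\Pit(\1_{N-1}\otimes I)^\top\diag((\pi_{j,i}P_j)_{j\neq i})(\1_{N-1}\otimes I)\Pit=-\sum_{j\neq i}\pi_{j,i}\Pit P_j\Pit$. Added to the $-\pi_{i,i}\Pit$ already present in the trailing block, these reproduce precisely the top-left block of $G_i$ found in the second step, while every other block passes through unchanged. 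Because the first two block-rows of the $\Phi^i$-term in \eqref{eq:main} are zero, that term modifies only the trailing block, where it is exactly $-\alpha_i H_i$. Hence the Schur complement of \eqref{eq:main} equals $G_i-\alpha_i H_i$, and positive definiteness of the pivot gives \eqref{eq:main}$\,\succcurlyeq 0\iff$\eqref{eq:Slemmacompact}. Quantifying over all $\i$ and composing the three equivalences yields the claim.

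I expect the main obstacle to be the bookkeeping in the Schur-complement step: verifying that the block $(\1_{N-1}\otimes I)\Pit$, coupled against the inverse $\Pibs\Pbs$, collapses to $\sum_{j\neq i}\pi_{j,i}\Pit P_j\Pit$, and keeping all block dimensions consistent so that the off-diagonal terms $-\Pit$, $-L_i^\top$ and the $\Phi^i$-term land in the correct trailing positions; the remaining arguments are direct invocations of \cref{lem:S} and of the congruence underlying \eqref{eq:dualform}.
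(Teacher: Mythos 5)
Your proposal is correct and follows essentially the same route as the paper: the paper's proof consists of the dualization to \eqref{eq:dualform}, the S-lemma with Slater's condition \eqref{eq:Slaters} giving equivalence to \eqref{eq:Slemmacompact}, and then a one-line appeal to the Schur complement of the $2\times 2$ upper-left block of \eqref{eq:main}, which is exactly the computation you carry out in detail. Your expanded bookkeeping (the pivot's positive definiteness from $Q\succ 0$ and $\Pi\in\Mp$, and the collapse of the coupling terms to $-\Pit Q\Pit-\sum_{j\neq i}\pi_{j,i}\Pit P_j\Pit$) is precisely what the paper leaves implicit.
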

\begin{pf}
By taking the Schur complement with respect to the $2$-by-$2$ block upper-left matrix in \eqref{eq:main}, and by definition of $L_i$, we retrieve \eqref{eq:Slemmacompact}.
% % By taking the  Schur complement with respect to the upper left block in \eqref{eq:main}, we obtain a $4$-by-$4$ block matrix with upper left block 
% % \begin{align*}
% %  & \tilde Q - \tilde Q (\1 \otimes I)^\top ( \tilde{\bs{\pi}}_{-i} \tilde {\bs{P}}_{\ms -i} +    (\1 \otimes I)\tilde Q(\1 \otimes I)^\top) ^{-1} (\1 \otimes I)\tilde Q 
% % \\
% %  & \quad = (Q+ (\1 \otimes I)^\top \Pibs  \Pbs (\1 \otimes I) )^{-1} \\
% % & \quad = (Q+ \textstyle \sum_{j\in\mc{I}\backslash \{i\}} \pi_{j,i} P_j)^{-1}
% % \end{align*}
% % where the first equality is \eqref{eq:Woodbury}.
% % Taking again the Schur complement for the
% upper left block yields  \eqref{eq:Slemmacompact}. 
\hfill $\blacksquare$
\end{pf}

Sufficiency in \cref{th:main} holds also without the assumption in \eqref{eq:Slaters}. Hence, since the unknown real system belongs to the consistency set, we have the following. 
\begin{corollary}[Data-driven stabilization]\label{cor:main1}
Assume that \sloppy $(\{ \tilde P_i\succ 0\}_\i, \{\alpha_i \geq 0\}_\i,\{L_i \}_\i, \tilde Q\succ  0,\Pi\in \Mp)$ satisfy \eqref{eq:main}, for all $\i$. Then the controller  in \eqref{eq:control} with $K_i=L_i  P_i$ globally asymptotically stabilizes the switched system in \eqref{eq:system}. Furthermore, \eqref{eq:performance1} holds true. 
\hfill $\square$
\end{corollary}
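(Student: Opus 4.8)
The plan is to chain together the results already in place, since \cref{cor:main1} is essentially the ``certificate $\Rightarrow$ stability'' read-off of \cref{th:main}. First I would invoke the sufficiency direction of \cref{th:main}, which the text emphasizes holds \emph{without} the Slater qualification \eqref{eq:Slaters}: because $(\{P_i\succ 0, K_i = L_i P_i\}_\i, \Pi, Q)$ satisfy \eqref{eq:main}, taking the Schur complement with respect to the upper-left $2\times 2$ block recovers $G_i - \alpha_i H_i \succcurlyeq 0$, i.e. \eqref{eq:Slemmacompact}, for each $\i$. The implication $\textnormal{(a)}\Rightarrow\textnormal{(b)}$ of the matrix S-lemma (\cref{lem:S})---which needs no Slater condition---then yields the dual inequality \eqref{eq:dualform} for every $(A_i,B_i)$ obeying \eqref{eq:compatibility_ineq}, that is, for all $(A_i,B_i)\in\C_i$. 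Pre- and post-multiplying \eqref{eq:dualform} by $P_i=\tilde P_i^{-1}\succ 0$ restores \eqref{eq:LM2}, so $(\{P_i,K_i\}_\i,\Pi,Q)$ solve \cref{prob:LM}.

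Next I would specialize to the true (unknown) plant. Since $(\bar A_i,\bar B_i)\in\C_i$ for every $\i$ (as noted below \eqref{eq:compatibility_definition}), inequality \eqref{eq:LM2} holds in particular for the genuine closed-loop matrices $\Acli = \bar A_i + \bar B_i K_i$ with $K_i = L_i P_i$. Because $Q\succ 0$, the non-strict inequality \eqref{eq:LM2} forces $\Acli^\top P_i + P_i\Acli + \sum_{j} \pi_{j,i}P_j \preccurlyeq -Q \prec 0$, which is exactly \eqref{eq:LM1} (with the $Q$-term there set to $0$). This is the hypothesis of \cref{prop:LMstability} in Appendix~\ref{app:LM}, so the min-switching controller \eqref{eq:control} with gains $K_{\sigma(x)}$ renders the origin of the switched system \eqref{eq:system} globally asymptotically stable.

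Finally, the performance certificate \eqref{eq:performance1} comes for free from the same invocation of \cref{prop:LMstability}: that result attaches to the law \eqref{eq:control} the bound $\int_0^\infty x^\top Q x\,\mathrm{d}t \le \min_\i x(0)^\top P_i x(0)$, obtained by integrating the decrease of $V(x)=\min_\i x^\top P_i x$ along closed-loop trajectories (including any sliding motion), precisely as recorded in the discussion following \cref{prob:LM}.

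I expect the only genuinely load-bearing step of the \emph{corollary} to be the passage from the single data inequality \eqref{eq:main} to feasibility of \cref{prob:LM} over the \emph{entire} compatibility set, which rests on the sufficiency half of the S-lemma; everything downstream is a direct application of the model-based \cref{prop:LMstability}, whose own proof (handling the nonsmooth switching rule and sliding motions) is the real technical core but is carried out separately in the appendix. The subtle point I would flag is the reconciliation of quantifiers: \cref{prop:LMstability} is applied to the single fixed true plant, whereas the S-lemma step certifies \eqref{eq:LM2} \emph{uniformly} for all of $\C_i$ at once---the two are compatible precisely because the common variables $P_i, K_i, \Pi, Q$ serve simultaneously as a valid Lyapunov--Metzler certificate for every member of $\C_i$, the true plant included.
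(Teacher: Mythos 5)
Your overall route is exactly the paper's: the corollary is presented there as an immediate consequence of the sufficiency half of \cref{th:main} (which, as you correctly stress, uses only the implication (a)$\Rightarrow$(b) of \cref{lem:S} and hence needs no Slater condition), the membership $(\bar A_i,\bar B_i)\in\C_i$, and the discussion following \cref{prob:LM}. Your stability chain --- Schur complement of \eqref{eq:main} to get \eqref{eq:Slemmacompact}, S-lemma to get \eqref{eq:dualform} over all of $\C_i$, congruence by $P_i$ to get \eqref{eq:LM2}, specialization to the true plant, and then \cref{prop:LMstability} applied to \eqref{eq:LM1} with the $Q$-term set to zero --- is sound and matches the intended argument.

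The one step that does not hold as written is the performance claim. You assert that \eqref{eq:performance1} ``comes for free from the same invocation of \cref{prop:LMstability}.'' It does not: the bound that \cref{prop:LMstability} attaches is \eqref{eq:performance}, which involves the matrices $Q_i$ appearing in its hypothesis \eqref{eq:LM_original}, and in your invocation you took $Q_i=0$; the resulting bound reads $0<\min_{\i} x(0)^\top P_i x(0)$ and says nothing about $Q$. To certify \eqref{eq:performance1} you need one extra (easy) move. For any $\epsilon\in(0,1)$, inequality \eqref{eq:LM2} for the true plant gives
\begin{align*}
\Acli^\top P_i + P_i\Acli + \textstyle\sum_{j\in\mc{I}}\pi_{j,i}P_j + (1-\epsilon)Q \preccurlyeq -\epsilon Q \prec 0,
\end{align*}
so \cref{prop:LMstability} applies with $Q_i=(1-\epsilon)Q$; since the closed-loop trajectory generated by \eqref{eq:control} does not depend on $\epsilon$ (the controller uses only the $P_i$'s), this yields $(1-\epsilon)\int_0^\infty x^\top Q x\,\mathrm{d}t < \min_{\i} x(0)^\top P_i x(0)$ for every such $\epsilon$, and letting $\epsilon\to 0^+$ recovers the non-strict bound \eqref{eq:performance1}. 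Equivalently, one can rerun the proof of \cref{prop:LMstability} under the non-strict inequality \eqref{eq:LM2}, obtaining $\dot v_{\min}(x,\dot x)\leq -x^\top Q x$ along every closed-loop solution and integrating. This limiting (or rederivation) step is also why \eqref{eq:performance1} is stated with ``$\leq$'' while \eqref{eq:performance} has ``$<$''. Aside from this patch, your proof coincides with the paper's.
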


% \begin{remark}[On Slater's condition]\label{rem:}
% In \cref{cor:main1}, we used that sufficiency in \cref{th:main} holds also without the assumption \eqref{eq:Slaters}. Although mild, the Slater's condition \eqref{eq:Slaters} does not hold in the case of zero disturbance; yet, necessity in \cref{th:main} still holds for this particular case, as it is shown by using 

% \begin{remark}[On Slater's condition]\label{rem:extensions}
% In \cref{cor:main1}, we used that sufficiency in \cref{th:main} holds also without the assumption \eqref{eq:Slaters}
% ...
% \end{remark}

Some technical remarks are in order.

\begin{remark}[Batch sizes]
\label{rem:batchsizes} The number of variables and the dimension in \eqref{eq:main} grows with  $N$, but does not depend on the dataset lengths $\{T_i\}_\i$, a convenient feature inherited from \cite{VanWaardeEtal_Noisy_TAC2022}. Depending on the disturbance model in \cref{asm:disturbancemodel}, a larger batch size can result in a smaller set $\mc{C}_i$, thus favoring solvability of \eqref{eq:main}.  \hfill $\square$
\end{remark}

\begin{remark}[Nonnegative \Metzler matrix]\label{rem:nonpositive Metzlermatrix}
Using \eqref{eq:LM2} in place of \eqref{eq:LM1} is crucial for \cref{th:main} as it allows inversion of  $\Pibs$ and $Q$. However, if $\Pi$ is given and known a priori, it is easy to accommodate for $\Pi \in \M$ instead of $\Pi \in \Mp$ (i.e., $\pi_{i,j}$ might be zero --note that stability of the closed-loop is not affected; this case will be useful in the following, see \cref{ex:LTI,ex:Markov}): it is enough to remove in $\Pibs$, $\Pbs$ any  block $j$ such that $\pi_{j,i}=0$ and cancel the corresponding rows and columns in \eqref{eq:main}.  \hfill $\square$ 
% It would also be possible to consider a given $Q\succcurlyeq 0$ (as done in \cite{GeromelColaneri_SIAM2006}); however this requires taking the strict inequality in 
\end{remark}

\begin{remark}[Unbounded $\mc{C}$ and data collection]
If $\C_i$ is quadratically stabilizable\footnote{Namely, if there are $K_i$, $P_i\succ 0$ such that $\Acli ^\top P_i + P_i \Acli \prec 0$ for all $(A_i,B_i)\in\C_i$;  this  condition can be easily checked from data  \cite[Th.~14]{VanWaardeEtal_Noisy_TAC2022}.}, then \cref{prob:main} admits a trivial solution with $\sigma(t) \equiv i$; yet, the system in \eqref{eq:LM1} might still be unfeasible when $\C_j$ is unbounded, for some $j \neq i$. In fact, the proof of \cref{lem:strictnonstrict} shows that if  \eqref{eq:LM2} admits a solution, then, for all $j \in \mc{I}$
% there exists  $K_j$ that makes the set $\{ \Acl_j= A_j+B_jK_j \mid (A_j,B_j)\in \C_j\}$ bounded, or equivalently 
\begin{align}\label{eq:rankcondition}
    \operatorname{rank} (X_j) = n.
\end{align}
%
% \marginJC{Have we defined the notation with $-i$ in subindex, and with boldface for the matrix $\bs{P}$?}
%
This necessary condition on the data is well known for
the case $N=1$ (i.e., \gls{LTI} systems) \cite[Lem.~15]{VanWaardeEtal_TAC2020}. 
Unfortunately, except for the case of linear noiseless controllable dynamics \cite{Willems_2005}, it is a vastly open problem how to design the  inputs of a system to guarantee that the obtained trajectories are ``persistently exciting'', e.g., satisfy \eqref{eq:rankcondition}  \cite[Sec.~V]{DePersis_Formulas_TAC2020}. Luckily, the rank condition can be easily checked a posteriori: if the data collected do not verify \eqref{eq:rankcondition} for some $j$, then that mode must be ignored in the design (by removing $j$ from $\I$). This simple expedient automatically avoids pathological unfeasibility of \eqref{eq:main} of the kind described above. Note that, even under \eqref{eq:rankcondition}, $\mc{C}_j$ can still be unbounded. \hfill $\square$
% Finally, note that  \eqref{eq:rankcondition} and \eqref{eq:compatibility_definition} imply boundedness of $\mc{C}_j$  if \eqref{eq:system} is autonomous (i.e., $m=0$), but not in general. \hfill $\square$
\end{remark}

\begin{remark}[Linear convergence]
\label{rem:rate}The proof of \cref{prop:LMstability} in Appendix~\ref{app:LM} shows that  stability in \cref{cor:main1} is 
exponential, with rate $\text{\textlambda}_{\min}(Q) / \max_{\i} \{\text{\textlambda}_{\max} (P_i)\}$.  \hfill $\square$
\end{remark}

\begin{remark}[Computational cost of \eqref{eq:main}] 
\label{rem:computationalcost} The inequalities \eqref{eq:main} depend on experimental data and prior on the disturbance only; hence, their solution provides a direct data-driven criterion to seek a stabilizing controller for \eqref{eq:system}.  The problem is nonconvex; it is cast as a \gls{BMI} in the variables $(\{\Pit,L_i\}_\i,\tilde Q, \Pi, \{\tilde \pi_{i,j} \}_{i,j \in \mc{I}}$), via the additional  constraints 
\begin{align}\label{eq:additionalconstraints}
  \pi_{i,j} \tilde \pi_{i,j} =1, \qquad \forall \i,\forall j\in \mc{I} \backslash \{i\},
\end{align} that enforce  $\Pi \in \Mp$.
% \footnote{These additional nonconvex constraints can beequivalently replaced by auxiliary variables and \gls{LMI} constraints.
% avoided via auxiliary variables $\{Y_{i,j} = Y_{i,j}^\top \in \R^{n\times n}\}_{i\neq j \in \mc{I}}$ subject to $Y_{i,j} \succcurlyeq \Pit P_j \Pit - \Pit$ (which are \glspl{LMI} after application of the Schur complement), and by replacing the term $-\sum_{j\in \mc{I}}\pi_{i,j} \Pit \tilde P_j \Pit $ in the top-left block of   \eqref{eq:dualform} by $-\sum_{j\in \mc{I}}\pi_{i,j} Y_{i,j}$, which leads to an equivalent formulation. 
% We do not introduce this redundancy in view of the relaxations in the next section, which do not require \eqref{eq:additionalconstraints}.}.
Due to their ubiquity in control systems, significant effort has been devoted to the development of algorithms \cite{Quoc_BMI}  and software \cite{PENBMI} for the solution of \glspl{BMI}, typically resorting to  sequential convex (often, \gls{SDP}) relaxations. These tools can be readily used to solve \eqref{eq:main}. 
Yet, the problem remains computationally expensive and poorly scalable. Let us recall that the model-based condition in \cite{GeromelColaneri_SIAM2006} (see \cref{prop:LMstability}) is also a  \gls{BMI}; more recently,  bilinear conditions have been also proposed in  the context of data-driven control, e.g., \cite{Dai_Semialgebraic_LCSS2020,Dai_Switched_CDC2018,Luppi:Bisoffi:DePersis:Tesi:SafePolynomial:arXiv2021}. All these formulations suffer the same limitations. \hfill $\square$
\end{remark}
\color{black}

To address the issue in \cref{rem:computationalcost}, in \cref{sec:relaxations} we propose two relaxations of \eqref{eq:main} that, at the price of some conservatism, result in substantial complexity reduction. However, first let us note that, if $\Pi$ is fixed, then \eqref{eq:main} reduces to an \gls{LMI}. This case is still relevant, as illustrated in the following two examples. 

\begin{example}[LTI systems] \label{ex:LTI}
Consider the system $\dot x =Ax+Bu$ (corresponding to \eqref{eq:system} for $N=1$), set $\Pi=0$ (justified by \cref{rem:nonpositive Metzlermatrix}), and note that \eqref{eq:LM1} recovers the standard Lyapunov inequality. Then, under \eqref{eq:Slaters}, \cref{th:main} proves that \eqref{eq:main} is \emph{necessary and sufficient} for quadratic stabilization of all the systems in $\C$. As an advantage with respect to \cite[Th.~14]{VanWaardeEtal_Noisy_TAC2022} (formulated in discrete-time), we use the nonstrict version of the S-lemma in our derivation, as \cref{lem:strictnonstrict} already bounds
\eqref{eq:LM1} away from zero over $\C$ at a solution --in fact, $Q\succ 0$ can be used to certify the worst-case (linear) convergence rate. \hfill $\square$. 
\end{example}

\begin{example}[Markov jump linear systems]\label{ex:Markov} Consider the system $\dot x = ( A_\sigma + B_\sigma K_\sigma) x $, where $\sigma(t)$ is an (uncontrolled, but measured) continuous-time Markov chain with known infinitesimal transition matrix $\Pi^\top$, $\Pi\in \M $. The system is stochastically stable (i.e., $ \int_0 ^\infty (E (\|x\|^2)) \text{d}t <\infty$ for any initial condition) if and only if \eqref{eq:LM1} admits a solution \cite[Th.~3.25]{Costa_MarkovJump_Springer2013}. In turn, the system of \glspl{LMI} \eqref{eq:main} provides \emph{nonconservative} conditions for stabilization if the system matrices are unknown, but open-loop experiments have been recorded. A numerical example is in \cref{sec:numerics:MJLS}.\hfill$\square$
\end{example} 
%%%%%%%%%%%%%%%%%%%%%%%%%%%%%%%%%%%%%%%%%%%%%%%%%%%%%%%%%%%%%%%%%%%%%%%%%%%%%%%%%%%%%%%%%%%%%%%%%%%%

\begin{figure*}[!h]
\begingroup%
\thinmuskip=0mu plus 1mu
\medmuskip=0mu plus 2mu
\thickmuskip=1mu plus 3mu
\begin{align}
\label{eq:main2}
\tag{C2}
% \begin{bmatrix}\\[-0.8em] 
%         \tilde {\bs{P}}_{\ms -i} +    (\1 \otimes I)\tilde {\bs{Q}}(\1 \otimes I)^\top  & (\1 \otimes I) \tilde { \bs Q} & 0 & 0& 0 \\
%         \tilde {\bs Q} (\1_{N-1} \otimes I_n)^\top  &\tilde {\bs Q} &\Pit & 0 & 0 \\ 
%       0& \Pit&  (N-1)\Pit  & -\tilde \gamma_i \Pit & - \bs L_i^\top  \\
%       0 & 0&  - \tilde \gamma_i \Pit & 0 & 0 \\
%       0& 0&  -\bs L_i & 0 & 0
%     \end{bmatrix}
\begin{bmatrix}\\[-0.8em] 
       \tilde {\bs{Q}} & 0 & \tilde P_i & 0 & 0
        \\
        0& 
        \tilde {\bs{P}}_{\ms -i} & (\1 \otimes I)\tilde P_i & 0& 0 
        \\
        \Pit&   \Pit  (\1 \otimes I)^\top &  (N-1)\Pit  & -\tilde \gamma_i \Pit & - \bs L_i^\top  
        \\
       0 & 0&  - \tilde \gamma_i \Pit & 0 & 0 
       \\
       0& 0&  -\bs L_i & 0 & 0
\end{bmatrix}
    - \alpha_i \begin{bmatrix}
      0& 0 \\
      0 & 0 \\
      I & \dot X_i \\ 
      0 & - X_i \\ 
      0 & - U_i 
    \end{bmatrix}  
       \begin{bmatrix}
        \Phi_{1,1}^i & \Phi_{1,2}^i \\[0.5em]
        {\Phi_{1,2}^i}^\top & \Phi_{2,2}^i
    \end{bmatrix} 
       \begin{bmatrix}
      0& 0 \\
      0 & 0 \\
      I & \dot X_i \\ 
      0 & - X_i \\ 
      0 & - U_i 
    \end{bmatrix}^\top \succcurlyeq 0 
% \\
% \begin{bmatrix}
%         (Q+ (\1 \otimes I)^\top \Pibs  \Pbs (\1 \otimes I) )^{-1}  &\Pit & 0 & 0 \\ 
%       \Pit&  -\pi_{i,i}\Pit  & -\Pit & -\Pit K_i^\top \\
%       0&  -\Pit & 0 & 0 \\
%       0&  -K_i \Pit & 0 & 0
%     \end{bmatrix}
\end{align}
\vspace{-0.7em}
\hrule
\endgroup%
\end{figure*}
% \setcounter{equation}{17}
%  \addtocounter{equation}{-1}

\section{Relaxations via structured weight matrix}\label{sec:relaxations}
%
% \marginJC{Both relaxations have to do with assuming a particular form for the matrix $\Pi$. How about we give the matrix a name in English (weight matrix?) and have a title that says "Relaxations via structure of weight matrix" or something better.}
%
In this section, we provide two  matrix inequalities, that are sufficient for the satisfaction of \eqref{eq:main}: the first 
is still a  \gls{BMI}, but of reduced dimension;
the second is more conservative, but only requires solving an \gls{LMI} with line-search on a scalar parameter. Both are obtained by assuming some extra structure on the \Metzler matrix $\Pi$.

\subsection{ \gls{BMI} of reduced order}
Let us impose,   for all $\i$, the additional constraints
\begin{align}\label{eq:specialPi_convex}
\pi_{j,i}= \gamma_i, \ \forall j\neq i; \quad \pi_{i,i} = -(N-1) \gamma_i,
\end{align}
where  $\gamma_i>0$ is a variable to be determined. 
To solve \eqref{eq:LM2} more efficiently under
\eqref{eq:specialPi_convex}, we replace the variable $Q\succ 0$ in \eqref{eq:LM2} with $\gamma_i \bs Q$, $\bs{Q} \succ 0$;
by dividing both sides by $\gamma_i$  we obtain: for all  $\i$, for all $(A_i,B_i)\in \C_i,$
 \begin{align}\label{eq:LM2modified}
\Acli^\top (\tilde \gamma_i P_i) +  (\tilde \gamma _i P_i)  \Acli + \textstyle \sum_{j\in \mc{I}\backslash\{i\}} (P_j-P_i)+ \bs{Q}\preccurlyeq 0.
\end{align}
%  Note that this is not restrictive, in terms of feasibility:  a solution to \eqref{eq:LM2modified} also provides a solution to \eqref{eq:LM2} (with $ Q = \bs{Q} \min_{\i}  \gamma_i$) and vice versa (in fact, one could even fix $Q= I $ or $\bs{Q}=I$ because of homogeneity). \color{black}
%
% \marginJC{I didn't understand what we're trying to say in the last sentence. With this choice, $Q$ depends on $i$?}
%
% is without loss of generality, as \eqref{eq:LM2modified} implies that  \eqref{eq:LM2} holds with $Q = \gamma_i\bs{Q}$,  and vice versa \eqref{eq:LM2} implies that \eqref{eq:LM_modified} is solved by $\bs{Q} = Q/  \max \{ \gamma_i, \i \} $). 
By repeating the derivation in \cref{th:main},  we can show that, under the Slater's condition \eqref{eq:Slaters}, via the transformation $\bs{L}_i=\tilde \gamma_i K_i \tilde P_i $,  
\eqref{eq:LM2modified} is equivalent to \eqref{eq:main2}, on top of the next page.
\begin{theorem}[Relaxed data-driven stabilization]\label{th:relax1}
Assume that $(\{\tilde P_i \succ 0,\alpha_i\geq 0, \tilde \gamma_i >0, \bs L_i\}_{\i}, \bs Q\succ 0) $ satisfy \eqref{eq:main2}, for all $\i$. Then the controller in  \eqref{eq:control}, with $K_i = \gamma_i \bs L_i P_i$, globally asymptotically stabilizes \eqref{eq:system}. Furthermore, \eqref{eq:performance1} holds with $Q = \bs{Q} \left(\min_{\i}  \gamma_i \right)$. \hfill $\square$
\end{theorem}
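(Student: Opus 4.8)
The plan is to follow exactly the pattern established for Theorem 1 (the proof of \cref{th:main}), adapting it to the structured weight matrix \eqref{eq:specialPi_convex} and the rescaled inequality \eqref{eq:LM2modified}. First I would verify that \eqref{eq:main2} is precisely the data-driven S-procedure reformulation of \eqref{eq:LM2modified}: taking the Schur complement of \eqref{eq:main2} with respect to its $2\times 2$ upper-left block (the block containing $\tilde{\bs Q}$ and $\tilde{\bs P}_{-i}$) should recover the compact S-lemma condition $G_i-\alpha_i H_i\succcurlyeq 0$ associated with the ``dualized'' version of \eqref{eq:LM2modified}, where $G_i$ now carries the rescaled quantities $\tilde\gamma_i\tilde P_i$ and the term $\sum_{j\neq i}(P_j-P_i)$ in its top-left corner, and the change of variable $\bs L_i=\tilde\gamma_i K_i\tilde P_i$ linearizes the controller-dependent off-diagonal entries. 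Since \cref{lem:S} gives sufficiency without the Slater qualification, feasibility of \eqref{eq:main2} for all $\i$ implies that \eqref{eq:LM2modified} holds for every $(A_i,B_i)\in\C_i$ and every $\i$.

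Next I would undo the rescaling to return to the original \gls{LM} inequality. Multiplying \eqref{eq:LM2modified} through by $\gamma_i>0$ and using \eqref{eq:specialPi_convex} — namely $\pi_{j,i}=\gamma_i$ for $j\neq i$ and $\pi_{i,i}=-(N-1)\gamma_i$, so that $\sum_{j\in\mc I}\pi_{j,i}P_j=\gamma_i\sum_{j\neq i}(P_j-P_i)$ — shows that \eqref{eq:LM2modified} is equivalent to \eqref{eq:LM2} with $Q$ replaced by $\gamma_i\bs Q$. As noted in the text preceding the theorem, setting $Q=\bs Q\,(\min_\i\gamma_i)\succ 0$ then yields a genuine solution of \eqref{eq:LM2} with a single common $Q\succ 0$, a common $\Pi\in\Mp$ of the special form \eqref{eq:specialPi_convex}, matrices $P_i\succ 0$, and gains recovered as $K_i=\gamma_i\bs L_i P_i$ (inverting the change of variable $\bs L_i=\tilde\gamma_i K_i\tilde P_i$). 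At this point the hypotheses of \cref{prob:LM} are met, so I can invoke \cref{cor:main1} (equivalently, \cref{prop:LMstability}) directly: the controller \eqref{eq:control} built from these $\{P_i,K_i\}_\i$ globally asymptotically stabilizes \eqref{eq:system}, and the performance bound \eqref{eq:performance1} holds with this $Q$, which is the claimed $Q=\bs Q(\min_\i\gamma_i)$.

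The step I expect to be the main obstacle is the bookkeeping in the Schur-complement verification: one must confirm that the specific placement of the factors $\tilde\gamma_i\tilde P_i$, the $(N-1)\tilde P_i$ diagonal term, and the Kronecker blocks $(\1\otimes I)\tilde P_i$ in \eqref{eq:main2} reproduce, after eliminating the upper-left block, exactly the rescaled $G_i$ — in particular that the term $\sum_{j\neq i}(P_j-P_i)$ is encoded correctly once the inverse variables $\tilde P_i,\tilde{\bs P}_{-i}$ are substituted back. Because this is the direct analogue of the computation already carried out for \eqref{eq:main}, and because sufficiency of the S-lemma does not require \eqref{eq:Slaters}, the remainder of the argument is then immediate and the stability/performance conclusions transfer verbatim from \cref{cor:main1}.
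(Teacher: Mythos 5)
Your proposal is correct and takes essentially the same route as the paper: the paper likewise treats \eqref{eq:main2} as the Schur-complement/S-lemma reformulation (repeating the derivation of \cref{th:main}) of the dualized, rescaled inequality \eqref{eq:LM2modified} under the change of variable $\bs L_i=\tilde\gamma_i K_i\tilde P_i$, then uses the earlier observation that a solution of \eqref{eq:LM2modified} yields a solution of \eqref{eq:LM2} with $Q=\bs Q\,(\min_{\i}\gamma_i)$ and $\Pi$ as in \eqref{eq:specialPi_convex}, so that stability and \eqref{eq:performance1} follow from \cref{cor:main1} (i.e., \cref{prop:LMstability}). Your remark that sufficiency of \cref{lem:S} requires no Slater condition also matches the paper's treatment.
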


\begin{remark}[Complexity reduction]
    The relaxation \eqref{eq:main2} is still a \gls{BMI}, but computationally much easier to solve than \eqref{eq:main}, where the  bilinearity involves significantly many more variables. 
For instance, \eqref{eq:main2} could be solved via \gls{SDP} and line-search over $N$ scalar variables -- instead of $N(N-1)$. 
As another example, let $M= \diag((M_i)_\i)$,
\begin{align}\label{eq:Mstructure}
    M_i\coloneqq \begin{bmatrix}
          1 &\tilde \gamma_i &  \operatorname{vec}(\tilde P_i)^\top
    \end{bmatrix}^\top \begin{bmatrix}
          1 & \tilde \gamma_i &  \operatorname{vec}( \tilde P_i)^\top
    \end{bmatrix},
\end{align}
and note that \eqref{eq:LM2modified} is an \gls{LMI} in the elements of $M$ and the variables $\{\bs L_i\}_\i$, $\tilde {\bs Q}$, $\{\alpha_i\}_{\i}$.
Therefore, the problem can  be recast as the following rank constrained  \gls{LMI}: find  $\tilde {\bs{Q}}$, $\{\bs L_i\}_\i$, $\{\alpha_i\}_{\i}$, $M$ such that:
\begin{align}\label{eq:rankconstrained}
\left\{
\begin{aligned}
&  M \succcurlyeq 0, \  \operatorname{rank}(M) = N,
\\
 &  \forall \i: \  [M_i]_{1,1} = 1, \ \eqref{eq:main2},  \tilde P_i \succ 0, \tilde{\gamma } _i > 0
   \end{aligned}
   \right.
\end{align}
where the constraints on $M$ enforce the structure in \eqref{eq:Mstructure}. This problem can be solved via recursive algorithms, where  each iteration requires solving an \gls{SDP} in all the variables (see \cite{Recht_SIAM2010} for an overview on the topic, or \cref{sec:numerics:switched:B} for an example). 
 In the case of \eqref{eq:main2}, the square matrix $M$ has $O(Nn^4)$ nonzero entries (variables); the corresponding number for the case of \eqref{eq:main}  is $O(N^3n^4)$. \hfill $\square$
%  (and  additional constr
\end{remark}

The rationale behind the relaxation in \eqref{eq:specialPi_convex} is that, in the known-model case, \cite[Eq.~39]{GeromelColaneri_SIAM2006} ensures that the \gls{LM} inequalities in \eqref{eq:LM1} admit a solution satisfying  \eqref{eq:specialPi_convex} if there is a Hurwitz convex combination of the matrices $\Acli$'s. Interestingly,
%
% \marginJC{Why not? B/c we have to reason now with a set of possible systems instead of just one?}
%
an analogous result holds for the data-driven case if the convex combination of the compatibility sets is \emph{quadratically} stabilizable. The proof of the following result is given in  Appendix~\ref{app:lem:existenceofspecialsol}. 
% The condition $\lambda_i>0$ in the next result is actually nonrestrictive (similarly to \cref{lem:strictnonstrict}).
\begin{lemma}[A condition for solvability] \label{lem:existenceofspecialsol}
Assume that there exist  $\lambda\coloneqq(\lambda_i)_\i\in \Lambda_+$, $\{K_i\}_\i$,  $P \succ 0$ such that
\begin{align}\label{eq:quadratic}
\left( \textstyle \sum _\i \lambda_i \Acli \right  )   ^\top P + P  \left( \textstyle \sum _\i   \lambda_i \Acli \right) \prec 0
\end{align}
for all $(A_i,B_i)_\i \in \C$. Then, \eqref{eq:LM2} is solvable with $\Pi$  as in   \eqref{eq:specialPi_convex}, with $\gamma_i= \mu\lambda_i^{-1}$ and large enough $\mu>0$. Furthermore, if \eqref{eq:Slaters}  holds, then \eqref{eq:main2} is feasible. \hfill $\square$
\end{lemma}

% Other classes of matrices $\Pi$ that  fall in \cref{lem:avg} would lead to similar results, but we prefer \eqref{eq:specialPi_convex} for clarity of exposition.
The use of relaxations like \eqref{eq:specialPi_convex} is predominant in the recent data-driven results for nonlinear systems, e.g., \cite{Dai_Semialgebraic_LCSS2020,Guo:DePersis:Tesi:PolynomialNoisy:TAC2021}, allowing for tractable design.
While such relaxations are algebraically meaningful, a disadvantage is that it is usually not clear how restrictive they are, namely what conditions they impose  on the real system and on the data quality for the existence of a solution.
In contrast, \cref{lem:existenceofspecialsol} provides such an insight for \eqref{eq:specialPi_convex};  
this point is further illustrated in \cref{sec:Numerics}.
\color{black}

\begin{remark}[Comparison with classical conditions]
% Assuming for simplicity that $m=0$,
% and that $\bs{Q}$ is  known (e.g., the choice $\bs{Q}= I$ is always possible because of homogeinity), 
% the nonlinearity in \eqref{eq:LM2modified} arises only via the product of the variables  $\tilde \gamma_i$ (scalar) and $P_i$, for each $\i$. Similarly, the inequality \eqref{eq:quadratic} is bilinear in $\lambda_i$'s and $P$; 
A well-known result from \cite{Liberzon2003} establishes that, if a solution to \eqref{eq:quadratic}  exists, then  the controller
\begin{align}\label{eq:classicinput}
 \sigma(x)=\argmin_\i \  x^\top ({\Aclibar{}}^\top P +  \Aclibar P) x, 
\end{align}
with $\Aclibar \coloneqq\bar A_i+ \bar B_iK_i$, stabilizes \eqref{eq:system}.
Yet, \eqref{eq:quadratic} is more restrictive than \eqref{eq:LM2modified} and,  most importantly, the controller \eqref{eq:classicinput} cannot be implemented in a data-driven fashion (in contrast to \eqref{eq:control}), because the true system matrices are not known.
% Let us remark some advantages of \eqref{eq:LM2modified} with controller~\eqref{eq:control} over \eqref{eq:quadratic} with controller~\eqref{eq:classicinput}. First, existence of a Hurwitz convex combination is sufficient, but not necessary, for solvability of \eqref{eq:LM2modified}. Second, in the data-driven case, it is prohibitive  to check  \eqref{eq:quadratic}  --in a non-conservative way--  when $\C$ is not a singleton. Technically speaking, we would need to deal with the convex combination of the ellipsoids $\C_i$'s, which is not  an ellipsoid.  Third and most important, the controller \eqref{eq:classicinput} cannot be implemented (in contrast to \eqref{eq:control}), because the true system matrices are not known. 
\hfill $\square$
 \end{remark}

% \color{blue}
%  \begin{remark}[On \cref{lem:existenceofspecialsol}]
% The use of relaxations 

% to obtain tractable conditions is ubiquitous in the recent 
% \end{remark}
% \color{black}

\subsection{LMI with line-search} 
Let us discuss a more conservative choice for the \Metzler matrix $\Pi$. For the structure in \eqref{eq:specialPi_convex},  we further assume the  parameters $\gamma_i$'s to be identical, so that
\begin{align}
\label{eq:specialPi_average}
 \ \Pi = \gamma (\textstyle \1_N \1_N ^\top-N I_N),
\end{align}
for some scalar variable $\gamma> 0$. 
The advantage is computational: for $\gamma$ fixed, \eqref{eq:main} (or \eqref{eq:main2}) reduces to an \gls{LMI}. Thus, a solution can be sought via standard convex solvers and line-search over \emph{one} scalar variable $\gamma$. 

\cref{lem:existenceofspecialsol} provides a sufficient condition for existence of a solution to \eqref{eq:main2} under \eqref{eq:specialPi_average}, i.e., quadratic stabilizability of the ``average'' compatibility set.
% (once again, otherwise dealing with this set is not trivial).
If a solution is found, then \cref{th:relax1} provides a stabilizing controller. 

\begin{remark}[Comparison with a known relaxation]\label{rem:less conservative} In \cite[Th.~4]{GeromelColaneri_SIAM2006} (see also \cite{Geromel:Colaneri:Bolzern:TAC:2008,Geromel:Deacto:Daafouz:Suboptimal:TAC2013}) the following relaxation to the (model-based) \gls{LM} inequalities \eqref{eq:LM1} is proposed:
\begin{align}\label{eq:LM_modified}
\! \Acli^{\!\top} P_i + P_i \Acli+\gamma(P_j - P_i) +Q \prec 0, \ \forall j\neq \i, \!
 \end{align}
with $\gamma>0$. These conditions are also \glspl{LMI} for fixed $\gamma$.
By multiplying both sides of \eqref{eq:LM_modified} by  $\pi_{j,i}/ \gamma$ and summing over 
$j$, we see that any solution  $(\{P_i,K_i\}_\i,Q,\gamma)$ of \eqref{eq:LM_modified} also solves \eqref{eq:LM1} with $\Pi$ as in \eqref{eq:specialPi_average}. 
%  and for any  matrix $\Pi\in \M$ such that $\pi_{i,i}=-(N-1)\gamma$ (including \eqref{eq:specialPi_average}), multiplying both sides of \eqref{eq:LM_modified} by  $\pi_{j,i}/ \gamma$ and summing over 
% $j$ shows that \eqref{eq:LM_modified} implies
% $\Acli^\top P_i + P_i \Acli + \textstyle \sum_{\i} \pi_{j,i} P_j  + Q \prec 0,$ i.e.,
% \eqref{eq:LM1}.
Hence, our proposed relaxation \eqref{eq:LM1}-\eqref{eq:specialPi_average} is (strictly, see an example in  \cref{sec:numerics:switched:A}) less restrictive than \eqref{eq:LM_modified} --as well as computationally convenient: the number of \glspl{LMI} in \eqref{eq:LM_modified} is quadratic in $N$. \hfill $\square$
\end{remark}

% which Finally, our performance certificate in \eqref{eq:performance1} is  superior to that in \cref{lem:Geromelrelaxation}.
% and with lower guaranteed cost (i.e., the performance ensured in \cref{lem:Geromelrelaxation} versus \eqref{eq:performance}, where the same matrices.

% To conclude this section, we note that optimizing some performance metric over the solutions of \gls{LM} inequalities is particularly simple under \eqref{eq:specialPi_convex}. For instance
% \begin{align}\label{eq:optimization1}
% \begin{matrix}
%     \underset{ \{ \tilde P_i \succ 0,\alpha_i\geq 0,L_i\}_\i,\tilde { Q} \succ 0 ,\Pi\in\mc{M_+}}{\min} &  -\sum_\i \operatorname{tr}(\tilde P_i) \\ \text{ s.t.} &  \text{  \eqref{eq:main},  } \operatorname{tr}(  \tilde {{Q}}) = 1
%     \end{matrix}
% \end{align}
% promotes matrices $P_i$'s with eigenvalues far from 0  (the trace constraint normalizes the homogeneous inequality in \eqref{eq:main}), cf. \eqref{eq:performance1}. Under \eqref{eq:specialPi_convex}, 
% for fixed $\gamma$, \eqref{eq:optimization1} is an \gls{SDP}. 

%%%%%%%%%%%%%%%%%%%%%%%%%%%%%%%%%%%%%%%%%%%%%%%%%%%%%%%%%%%%%%%%%%%%%%%%%%%%%%%%%%%%%%%%%%%%%%%%%%%%%%%%%%%%%%%%%%%%%%%%%%%%%%%%%%%%%%%%%%%%%%%%%%%%%%%%%%%%%%%%%%%%%%%%%%%%%%%%%%%%%%%%%%%%%%%%%%%%%%%%%%%%%%%%%%%%%%%%%%%%%%%%%%%%%%%%%%%

\section{Performance specifications}\label{sec:Extensions:H}
\begin{figure*}[!h]
\begingroup%
\thinmuskip=0mu plus 1mu
\medmuskip=0mu plus 2mu
\thickmuskip=1mu plus 3mu
\begin{align}
\label{eq:mainH2}
\tag{$\H_2$}
\begin{bmatrix}\\[-0.8em]
        \tilde{\bs{\pi}}_{-i}\tilde {\bs{P}}_{\ms -i}  & (\1 \otimes I) \tilde P_i & 0 & 0& 
        \\
        \tilde P_i (\1 \otimes I_n)^\top & -\pi_{i,i}\Pit -Y_i & -\Pit & -L_i^\top  \\
        0&  -\Pit & 0 & 0 \\
         0&  -L_i & 0 & 0
    \end{bmatrix} - \alpha_i \begin{bmatrix}
      0 & 0 \\
      I & \dot X_i \\ 
      0 & - X_i \\ 
      0 & - U_i 
    \end{bmatrix}  
        \Phi^i 
       \begin{bmatrix}
      0 & 0 \\
      I & \dot X_i \\ 
      0 & - X_i \\ 
      0 & - U_i 
    \end{bmatrix}^\top  \succcurlyeq 0, \quad  \begin{bmatrix}
          I &  C_i\tilde P_i +D_i L_i \\
          \star  & Y_i
    \end{bmatrix}\succ 0
\end{align}
\endgroup%
\end{figure*}
\begin{figure*}[!h]
\vspace{-1.5em}
\begingroup%
\thinmuskip=0mu plus 1mu
\medmuskip=0mu plus 2mu
\thickmuskip=1mu plus 3mu
\begin{align}
\label{eq:mainHinf}
\tag{$\H_\infty$}
\begin{bmatrix}\\[-0.8em]
        \tilde{\bs{\pi}}_{-i}\tilde {\bs{P}}_{\ms -i}  & (\1 \otimes I) \tilde P_i & 0 & 0& 
        \\
        \tilde P_i (\1 \otimes I_n)^\top & -\pi_{i,i}\Pit -Y_i & -\Pit & -L_i^\top  \\
        0&  -\Pit & 0 & 0 \\
         0&  -L_i & 0 & 0
    \end{bmatrix} - \alpha_i \begin{bmatrix}
      0 & 0 \\
      I & \dot X_i \\ 
      0 & - X_i \\ 
      0 & - U_i 
    \end{bmatrix}  
        \Phi^i 
       \begin{bmatrix}
      0 & 0 \\
      I & \dot X_i \\ 
      0 & - X_i \\ 
      0 & - U_i 
    \end{bmatrix}^\top \succcurlyeq 0, \quad  \begin{bmatrix}
          Y_i & E_i &   (C_i\tilde P_i +D_i L_i)^\top \\
          \star  & \rho I & -F_i^\top \\
           \star & \star & I
    \end{bmatrix}\succ 0 \end{align}
\hrule
\endgroup%
\end{figure*}
% \setcounter{equation}{28}
%  \addtocounter{equation}{-1}

In this section we study quantitative
performance specifications. We consider a model like~\eqref{eq:system} with disturbances in the dynamics and outputs,
\begin{subequations}\label{eq:system2}
    \begin{align}
    \label{eq:system2:a}
    \dot{x} &  =\bar{A}_\sigma x+ \bar B_\sigma u + E_\sigma \dist \\
    \label{eq:system2:b}
    z & = C_\sigma x + D_\sigma u + F_\sigma \dist,
\end{align}
\end{subequations}
where $\dist(t)\in\R^q$ is an exogenous disturbance and $z \in \R^p$ is a performance output;  $\{C_i,D_i, E_i,F_i\}_\i$ are \emph{known} matrices. The matrices  $\{C_i,D_i,F_i\}_\i$ are chosen by the designer to define the performance objective. The matrices 
$\{E_i\}_\i$ measure the influence of the disturbance signal on the state evolution, and can be used to encode structural prior knowledge about the disturbance (e.g., if $\psi$ only acts on some state components; if no prior information is available, then $E_i=I$ is chosen).
%
% \marginJC{We have used the notation $\bar{D}_i$ earlier to denote a very different thing, so this might be confusing}
%
% and the \emph{known} matrices $\{E_i\}_\i$ and $\{F_i\}_\i$ measure the influence of the disturbance signal on the state evolution and output, respectively. 
Further, the only information available on the matrices $\{\bar A_i,\bar B_i\}_\i$ is a set of data satisfying \eqref{eq:data}.
%
% \marginJC{"...satisfying \eqref{eq:data}" But with measurements noise too? Or without additional noise but only $ E_\sigma w$}
%

We next address the  $\H_2$ and $\H_\infty$ stabilization problems for \eqref{eq:system2}. We refer to  Appendix~\ref{app:LM:performance} for a formal description of the performance indices $J_2$ and $J_\infty$: in analogy to the  \gls{LTI} case, these can be defined in terms of the system response to impulsive and integrable disturbances, respectively. 
The problems are well known to be hard to solve for switched linear systems, even when a model is perfectly known, and thus tight formulation are usually replaced by sufficient conditions \cite{Geromel:Colaneri:Bolzern:TAC:2008,Antunes:Heemels:LQRswitched:TAC2017,Deaecto:Geromel:Hinf:ASME:2010}; in particular, in this paper we rely on the \gls{LM} inequalities reviewed in \cref{app:LM:performance}.
For the continuous input, we again only focus on linear switched controllers $u = K_\sigma x $. 

\begin{remark}[Refined disturbance model]\label{rem:refineddisturbance}
Assume that the data \eqref{eq:data} are generated by \eqref{eq:system2:a}, where the quantities $\bar \noiseset_i$ in \eqref{eq:data} are due to the unknown process disturbance $\dist$ in \eqref{eq:system2:a}: in short, $\noise(t^i_k) = E_i \dist(t^i_k)$. Then, if $\bar{\distset}_i \coloneqq  \begin{bmatrix} \dist(t_1^ i) & \dist(t_2^ i) & \dots & \dist(t_{T_i}^ i) \end{bmatrix}$ satisfies 
\begin{align*}
\begin{bmatrix}
    I \\  \bar \distset_i^\top 
    \end{bmatrix}^\top  \begin{bmatrix}
        \hat \Phi_{1,1}^i & \hat \Phi_{1,2}^i \\[0.5em]
        {\hat{\Phi}_{1,2}}^{i^\top} & \hat \Phi_{2,2}^i
    \end{bmatrix} 
       \begin{bmatrix}
    I \\  \bar \distset_i^\top 
    \end{bmatrix} \succcurlyeq 0, \end{align*} with $\hat \Phi^i_{2,2} \prec 0$ (e.g., an energy-type prior on the process noise $\dist$), then \cref{asm:disturbancemodel} is satisfied with  $\Phi^{i}_{1,1} = E_i\hat \Phi^i_{1,1}E_i^\top$, $\Phi_{1,2}^i = E_i \hat \Phi^{i}_{1,1} $, $ \Phi_{2,2}^i = \hat \Phi_{2,2}^i$ (see  \cite[Rem.~2]{VanWaardeEtal_Noisy_TAC2022}), which allows us to incorporate the prior knowledge on the matrix $E_i$. 
    Additional noise/estimate error  can also be taken into account by properly choosing  $\Phi_{i}$. 
    \hfill $\square$
\color{black}
\end{remark}

\subsection{Data-driven $\mc{H}_2$ control}\label{sec:Extensions:H2} 

Following \cref{prop:H2} in \cref{app:LM}, to design a controller with guaranteed $\H_2$ performance, we impose: for all  $\i$, for all $(A_i,B_i)\in \C_i$,
\begin{align}\label{eq:LMH2} 
\Acli ^\top P_i + P_i \Acli + \textstyle \sum_{j\in \mc{I}}  \pi_{j,i} P_j+ \Ccli^\top \Ccli \prec  0,
\end{align}
where $\Ccli \coloneqq C_i +D_iK_i$. 
 Ideally, we should introduce no additional nonlinearity with respect to \eqref{eq:main} (we recall that $K_i$ is a variable, and further the matrix $\Ccli^\top \Ccli$ might be singular, thus not positive definite). This goal is achieved in the  following result. 

\begin{theorem}[Data-driven $\H_2$ stabilization]\label{th:mainH2}
Let  $(\{ P_i\succ 0,\alpha_i \geq 0,L_i,Y_i \succ 0\}_\i, \Pi\in \Mp)$ satisfy \eqref{eq:mainH2}, where the matrix $\Phi_i$ is given in \cref{asm:disturbancemodel}, for all $\i$.
%
% \marginJC{I'm still confused as to which $\Phi_i$ this is given the observation in Remark 8.}
%
Then the controller in \eqref{eq:control} with $K_i=L_i P_i$ globally asymptotically stabilizes the switched system in \eqref{eq:system2}. Furthermore, if $E_i =E$ for all $\i$, then  the closed-loop system satisfies $
    J_2 (\sigma,u) < \min \  \{\operatorname{tr} (E^\top P_i E ), i\in \mc{I} \}. 
   $
%   \QEDopenhereeqn 
\hfill $\square$
\end{theorem}

\begin{figure*}[!h]
\begingroup%
\thinmuskip=0mu plus 1mu
\medmuskip=0mu plus 2mu
\thickmuskip=1mu plus 3mu
\begin{align}
\label{eq:maininvariance}
\tag{C4}
% (\forall  j \in \mc{J}) \quad 
\begin{bmatrix}\\[-0.8em]
        \beta_j I & E^\top & 0 & 0
        \\
        E &
        (\pi_{j,j}-\beta_j)\Pjt +  \Pjt \left(\sum_{k \neq j }   \pi_{k,j}P_k \right) \Pjt & -\Pjt & -L_j^\top  
        \\
      0&  -\Pjt & 0 & 0 
        \\
        0&   -L_j & 0 & 0
    \end{bmatrix} - \alpha_j \begin{bmatrix}
      0 & 0 \\
      I & \dot X \\ 
      0 & - X \\ 
      0 & - U
    \end{bmatrix}  
              \begin{bmatrix}
        \Phi_{1,1} & \Phi_{1,2}\\[0.5em]
        {\Phi_{1,2}}^\top & \Phi_{2,2}
    \end{bmatrix} 
      \begin{bmatrix}
      0 & 0 \\ 
      I & \dot X \\ 
      0 & - X \\ 
      0 & - U 
    \end{bmatrix} ^\top\succcurlyeq 0
    % \\
    % \begin{aligned}
    % \begin{bmatrix}
    %       \beta_j E E^\top -  Y_0 &  \tilde P_0 (\1 \otimes I)^\top \\
    %       \star  &  \tilde{\bs{\pi}}_{-0}\tilde {\bs{P}}_{\ms -0}
    % \end{bmatrix}\succcurlyeq  0
    % \\
    % \begin{bmatrix}
    %       \beta_j E E^\top  \sum _{k \in \mc{J}\backslash \{0,j \} }  \pi_{k,j} \tilde P_j P_K \tilde P_j - Y_0 & (\1 \otimes I) \Pjt \\
    %       \star  &  \tilde{\bs{\pi}}_{-0}\tilde {\bs{P}}_{\ms -0}
    % \end{bmatrix}\succ 0
    % \end{aligned} 
\end{align}
  \vspace{-2em}
  %%%%%%%%%%%%%%%%%%%%%%%%%%%%%%%%%%%%%%%%%%%%%%%%%%%%%%%%%%%%%%%%%%%%%%%%%%%%%%%%%%%%%%%%%%%%%%%%%%%%%%%%%%%%%%%%%%%%%%%%%%%%%%%%
\begin{align}
\label{eq:maininvariance2}
\tag{C5}
% (\forall  j \in \mc{J}) \quad 
\begin{bmatrix}\\[-0.8em]
        \bs \beta_j I & E^\top & 0 & 0 
        \\
        E & -(M+\bs \beta_j)\Pjt + \Pjt \left(  \sum_{k\neq j} P_k \right) \Pjt  & -\tilde \gamma_j\Pjt & -\bs L_j^\top  \\
        0 & -\tilde \gamma_j\Pjt & 0 & 0 
        \\
        0 &   -\bs L_j & 0 & 0
    \end{bmatrix} - \bs \alpha_j \begin{bmatrix}
      0 & 0 \\
      I & \dot X \\ 
      0 & - X \\ 
      0 & - U
    \end{bmatrix}  
          \begin{bmatrix}
        \Phi_{1,1} & \Phi_{1,2}\\[0.5em]
        {\Phi_{1,2}}^\top & \Phi_{2,2}
    \end{bmatrix} 
      \begin{bmatrix}
      0 & 0 \\
      I & \dot X \\ 
      0 & - X \\ 
      0 & - U 
    \end{bmatrix}^\top  \succcurlyeq 0
    % \\
    % \begin{aligned}
    % \begin{bmatrix}
    %       \beta_j E E^\top -  Y_0 &  \tilde P_0 (\1 \otimes I)^\top \\
    %       \star  &  \tilde{\bs{\pi}}_{-0}\tilde {\bs{P}}_{\ms -0}
    % \end{bmatrix}\succcurlyeq  0
    % \\
    % \begin{bmatrix}
    %       \beta_j E E^\top  \sum _{k \in \mc{J}\backslash \{0,j \} }  \pi_{k,j} \tilde P_j P_K \tilde P_j - Y_0 & (\1 \otimes I) \Pjt \\
    %       \star  &  \tilde{\bs{\pi}}_{-0}\tilde {\bs{P}}_{\ms -0}
    % \end{bmatrix}\succ 0
    % \end{aligned} 
 \end{align}
\hrule
\endgroup%
\end{figure*}

\begin{pf}
By taking the Schur complement with respect to the upper-left block and by \cref{lem:S}, we deduce that \eqref{eq:mainH2} implies:
% (actually, \emph{it is equivalent to}  if \eqref{eq:Slaters} holds)
for all $\i$, for all $(A_i,B_i)\in \C_i$,
\begin{subequations}\label{eq:useful3}
\begin{align}\label{eq:useful3:a}
- \tilde P_i \Acli ^\top  -  \Acli \tilde P_i - \textstyle \sum_{j\in \mc{I}}  \pi_{j,i} \tilde P_i  P_j\tilde P_i -  Y_i \succcurlyeq  0,
\\\label{eq:useful3:b}
Y_i \succ (C_i \tilde P_i +D_i L_i)^\top (C_i \tilde P_i +D_i L_i)
 \end{align}
\end{subequations}
where  $K_i \coloneqq L_i P_i$. 
 Multiplying both sides of \eqref{eq:useful3:a} by $P_i$ and using \eqref{eq:useful3:b}, we retrieve \eqref{eq:LMH2}, because $C_i\Pit +D_i L_i = \Ccli \Pit$. The conclusion follows by \cref{prop:H2}. 
%  In turn, by \cite[Th.~1]{Geromel:Colaneri:Bolzern:TAC:2008}, \eqref{eq:LMH2} implies that
%  \begin{align*}
%      J_2(\sigma,u)  & =\textstyle \sum_{k=1}^q \int_{t=0} ^\infty  x_k^\top Q_\sigma x_k \  \text{d}t
%      \\ & < \textstyle \sum_{k=1}^q \min_{{\i}} \  (Ee_k)^\top P_i (Ee_k)
%      \\ 
%      & \leq  \textstyle
%      \min_{{\i}} \ 
%      \sum_{k=1}^q    (Ee_k)^\top P_i (Ee_k),
%   \end{align*}
%   and the conclusion follows by definition of $e_k$. 
\hfill $\blacksquare$
\end{pf}

\subsection{Data-driven $\mc{H}_\infty$ control}\label{sec:Extensions:Hinfty} 

The following is the data-driven counterpart of \cref{prop:Hinf} in Appendix~\ref{app:LM}.

\begin{theorem}[Data-driven $\H_\infty$ stabilization]\label{th:main:Hinf}
Let $(\{ P_i\succ 0,\alpha_i \geq 0\,L_i ,Y_i \succ 0\}_\i, \Pi\in \Mp, \rho>0 )$ satisfy \eqref{eq:mainHinf}, where the matrix $\Phi_i$ is given in \cref{asm:disturbancemodel}, for all $\i$. Then the controller in \eqref{eq:control} with $K_i=L_i P_i$ globally asymptotically stabilizes the switched system in \eqref{eq:system2}. Furthermore,  the closed-loop systems satisfies    $J_\infty (\sigma,u) < \rho.$ \hfill $\square$ 
% \begin{equation*}
%   \QEDopenhereeqn
%     \end{equation*} 
\end{theorem}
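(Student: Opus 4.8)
The plan is to follow the proof of \cref{th:mainH2} almost verbatim, the only genuinely new ingredient being the way the performance certificate is assembled from the second matrix inequality. I first observe that the large left-hand matrix in \eqref{eq:mainHinf} is identical to the one in \eqref{eq:mainH2}; hence the same step — taking the Schur complement with respect to the upper-left block $\tilde{\bs{\pi}}_{-i}\tilde{\bs{P}}_{-i}$ and invoking the matrix S-lemma (\cref{lem:S}) — applies unchanged. Under \eqref{eq:Slaters} this is an equivalence, and sufficiency holds without it, so \eqref{eq:mainHinf} implies, for all $\i$ and all $(A_i,B_i)\in\C_i$, the robust dual inequality \eqref{eq:useful3:a}, namely
\begin{align*}
-\tilde P_i\Acli^\top - \Acli\tilde P_i - \textstyle\sum_{j\in\mc{I}}\pi_{j,i}\tilde P_iP_j\tilde P_i - Y_i \succcurlyeq 0, \qquad K_i\coloneqq L_iP_i.
\end{align*}

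I would then process the second, $3\times 3$-block inequality in \eqref{eq:mainHinf}. Taking its Schur complement with respect to the bottom-right identity block and using $C_i\tilde P_i + D_iL_i = \Ccli\tilde P_i$ (which holds since $K_i=L_iP_i$) gives
\begin{align*}
\begin{bmatrix}
Y_i & E_i \\ \star & \rho I
\end{bmatrix}
\succ
\begin{bmatrix}
\tilde P_i\Ccli^\top\Ccli\tilde P_i & -\tilde P_i\Ccli^\top F_i \\ \star & F_i^\top F_i
\end{bmatrix},
\end{align*}
which is the disturbance-to-output part of the bounded-real lemma written in the dual variable $\tilde P_i$.

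The final step combines the two: substituting the lower bound $Y_i \preccurlyeq -\tilde P_i\Acli^\top - \Acli\tilde P_i - \sum_{j\in\mc{I}}\pi_{j,i}\tilde P_iP_j\tilde P_i$ into the $(1,1)$ entry of the previous display produces a single state--disturbance block
\begin{align*}
\begin{bmatrix}
-\tilde P_i\Acli^\top - \Acli\tilde P_i - \textstyle\sum_{j\in\mc{I}}\pi_{j,i}\tilde P_iP_j\tilde P_i - \tilde P_i\Ccli^\top\Ccli\tilde P_i & E_i + \tilde P_i\Ccli^\top F_i \\
\star & \rho I - F_i^\top F_i
\end{bmatrix}\succ 0.
\end{align*}
A congruence by $\diag(P_i,I)$ turns every $\tilde P_i$ into $P_i$ — in particular $\sum_{j\in\mc{I}}\pi_{j,i}\tilde P_iP_j\tilde P_i\mapsto\sum_{j\in\mc{I}}\pi_{j,i}P_j$ — and, after negating and absorbing the sign of the off-diagonal block by a further congruence with $\diag(I,-I)$, yields exactly the primal $\H_\infty$ \gls{LM} inequality of \cref{prop:Hinf}, holding for every $(A_i,B_i)\in\C_i$ and hence for the true pair $(\bar A_i,\bar B_i)$. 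Global asymptotic stability of \eqref{eq:control} and the guarantee $J_\infty(\sigma,u)<\rho$ then follow directly from \cref{prop:Hinf}.

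I expect the main obstacle to lie precisely in this assembly step. Unlike the $\H_2$ case, where the trace bound drops out immediately, the $\H_\infty$ certificate must be stitched together from two separate matrix inequalities, and one has to check that after the congruence by $\diag(P_i,I)$ the coupling term becomes exactly $P_iE_i+\Ccli^\top F_i$ and the $(2,2)$-block exactly $F_i^\top F_i-\rho I$, with no leftover indefinite remainder; verifying that the substitution of the $Y_i$-bound preserves strictness (so that the resulting inequality is genuinely $\prec 0$, as required by \cref{prop:Hinf}) is the delicate point.
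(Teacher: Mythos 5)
Your proposal is correct and follows essentially the same route as the paper's proof: the S-lemma applied to the (identical) first block inequality to obtain \eqref{eq:useful3:a}, Schur-complement manipulation of the second inequality, substitution of the $Y_i$ bound, congruence by $P_i$, and conclusion via \cref{prop:Hinf}. The only cosmetic difference is that the paper takes the Schur complement of the second inequality with respect to the full $2\times 2$ bottom-right block (isolating $Y_i$ directly), whereas you complement only the identity block and carry the disturbance row along — your final $2\times 2$ inequality is precisely the Schur complement of \eqref{eq:Hinfinequality} with respect to its $-I$ block, so the two computations coincide.
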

\begin{pf}
 By applying the Schur complement with respect to the two bottom-right blocks, the second inequality of \eqref{eq:mainHinf} is equivalent to
 \begin{align*} Y_i & \succ  \begin{bmatrix} E_i^\top \\
    \Ccli \tilde P_i
    \end{bmatrix}^\top
    \begin{bmatrix}
          \rho_I  & -F_i^\top \\
          \star & I 
    \end{bmatrix}^{-1}
     \begin{bmatrix} E_i^\top \\
    \Ccli \tilde P_i
    \end{bmatrix}
 \end{align*}
 where we used the definition $C_i \tilde P_i +D_i L_i = \Ccli \tilde P_i $. By replacing the previous inequality in \eqref{eq:useful3:a}, by multiplying both sides by $P_i$, and by a Schur complement argument, we obtain: for all  $\i$, for all $(A_i,B_i)\in \C_i$,
 \begin{align*}
     \begin{bmatrix}
           \Acli^\top P_i +P_i\Acli + 
          \textstyle \sum_{j\in \mc{I}} \pi_{i,j} P_j & P_i E_i & \Ccli^\top
          \\
          \star & -\rho I & F_i^\top  \\
          \star & \star & -I
     \end{bmatrix} \prec 0. 
 \end{align*}
 The conclusion follows by \cref{prop:Hinf}. \hfill $\blacksquare$
\end{pf}

The relaxations in \cref{sec:relaxations} can also be applied to reduce the computational cost in \eqref{eq:mainH2} and \eqref{eq:mainHinf}. We note that, for a \emph{fixed} matrix $\Pi$, optimizing the cost $\min_{{\i}} \operatorname{tr} (E^\top P_i E)$ over the solutions  of \eqref{eq:mainH2} reduces to $N$ \gls{SDP} problems. 
% (with $\operatorname{tr}(E^\top  P_i E)$ as cost of the $i$-th problem). 
Instead the upper bound $\rho$ in \cref{th:main:Hinf} is conveniently  linear, thus its minimization corresponds to only one \gls{SDP}.

%%%%%%%%%%%%%%%%%%%%%%%%%%%%%%%%%%%%%%%%%%%%%%%%%%%%%%%%%%%%%%%%%%%%%%%%%%%%%%%%%%%%%%%%%%%%%%%%%%%%%%%%%%%%%%%%%%%%%%%%%%%%%%%%%%%%%%%%%%%%%%%%%%%%%%%%%%%%%%%%%%%%%%%%%%%%%%%%%%%%

\redone{

\section{Discrete-time switched linear systems}\label{sec:DiscreteTime}
\begin{figure*}[!h]
\begingroup%
\thinmuskip=0mu plus 1mu
\medmuskip=0mu plus 2mu
\thickmuskip=1mu plus 3mu
\begin{align}
\label{eq:main_DT}
\!
\begin{bmatrix}\\[-0.8em]
        \tilde{\bs{\Lambda}}_{-i}\tilde {\bs{P}}_{\ms -i} + (\1 \otimes I)\tilde \lambda_{i,i} \tilde P_i (\1 \otimes I)^\top  & (\1 \otimes I) \tilde \lambda_{i,i} \tilde P_i & 0 & 0& 0
        \\
        \tilde \lambda_{i,i} \tilde P_i (\1 \otimes I_n)^\top & \tilde \lambda_{i,i}\Pit & 0 & 0   & 0
        \\
        0 & 0 & -Y_i & - L_i ^\top & 0
        \\
         0 & 0 &  -L_i & 0 & L_i 
         \\
         0 & 0 & 0 & L_i^\top &  Y_i
    \end{bmatrix}\! - \alpha_i \begin{bmatrix}
      0 & 0 \\
      I & \dot X_i \\ 
      0 & - X_i \\ 
      0 & - U_i \\
      0 & 0
    \end{bmatrix}  
        \Phi^i 
       \begin{bmatrix}
      0 & 0 \\
      I & \dot X_i \\ 
      0 & - X_i \\ 
      0 & - U_i \\
      0 & 0 
    \end{bmatrix}^{\hspace{-0.3em} \top} \hspace{-0.3em} \succcurlyeq 0, 
    \quad  
    \begin{aligned}
    \tilde Q - \Pit \succ 0
    \\
    \begin{bmatrix}
          Y_i - \tilde P_i &  \tilde P_i
          \\
          \tilde P_i &   \tilde Q - \Pit
    \end{bmatrix} \! \succcurlyeq 0
    \end{aligned}
    \tag{C3}
\end{align}
\hrule
\endgroup%
\end{figure*}

Data-driven stabilization of \emph{discrete-time} switched linear systems via \gls{LM} inequalities presents
some additional technical difficulties, which we address in this section. We consider the (nominal) plant
\begin{align}\label{eq:DTsystem}
    x(k+1) =  \bar A_{\sigma (k)} x(k) + \bar B_\sigma u(k),
\end{align}
where $\sigma \in \I$ is the switching signal, and $\{\bar {A}_i \in \R^{n\times n},\bar B_i \in \R^{n \times m} \}_{\i} $ are the unknown true system matrices. Analogously to the continuous-time case, we assume that for each mode $\i$, $T_i> 0$ samples have been collected for each subsystem at sampling times $\{t^i_s\}_{s=1,2,\dots,T_i} \subset \mathbb{N}$, satisfying 
\begin{align}\label{eq:DTdata}
    X_i^+ = \bar A_i X_i + \bar B_i U_i + \bar \noiseset_i,
\end{align}
where 
\begin{align*}
     X_i^{+} & \coloneqq  \begin{bmatrix}  x(t_1^ i +1)  &  x (t_2^ i+1) & \dots &  x(t_{T_i}^ i+1)  \end{bmatrix},
\end{align*}
and the other quantities in \eqref{eq:DTsystem} are defined as in \eqref{eq:data}, with $\noise$ being an unknown process noise satisfying \cref{asm:disturbancemodel}. Note that, for each mode, this setup retrieves exactly that considered for \gls{LTI} systems, e.g., in \cite{VanWaardeEtal_Noisy_TAC2022,DePersis_Formulas_TAC2020}. 

Our first result provides a data-driven version of the  \gls{LM} inequalities in \cite[Lem.~2]{GeromelColaneri_IJC2006}, which read:  for all $\i$,
\begin{align}\label{eq:DTLM}
\Acli^\top \left( \textstyle  \sum _{j\in \I} \lambda_{i,j} P_j
\right ) \Acli - P_i + Q \prec 0 ,
\end{align}
where $\{P_i \succ 0 \}_{\i}$, $\Lambda = [\lambda_{i,j} ]_{i,j \in \I} \in \mc{S}$, $Q \succcurlyeq 0$ and $\Acli \coloneqq A_i +B_i K_i$. We wish to impose these conditions for all the matrices compatible with the experiments. Similarly to \cref{lem:strictnonstrict}, we can consider without loss of generality 
the variant:  for all $ \i$, for all $(A_i,B_i) \in \C_i$,
\begin{align}\label{eq:DTLM:b}
\Acli^\top \left( \textstyle \sum _{j\in \I} \lambda_{i,j} P_j
\right ) \Acli - P_i + Q \preccurlyeq 0,
\end{align}
with $\{P_i \succ 0 \}_{\i}$, $Q \succ 0$ and $\Lambda \in \mc{S}_+$, $P_i -Q \succ 0$.
%
% \marginJC{Where did $P_i -Q \succ 0$ come from? Or is it missing above when introducing
% ~\eqref{eq:DTLM}?}
% \marginMB{It is implied by (30), it does not affect feasibility of (31), we use it for the dualization. }
%

Note that in \eqref{eq:DTLM:b}, the quantity $\Acli^\top$ is a factor on the left, which prevents us from directly applying the S-lemma. A standard dualization argument (i.e., double application of the Schur complement) gives the equivalent inequality 
\begin{align}\label{eq:DTdual}
 \left(\textstyle \sum_{j\in\mc{I}} \lambda_{j,i} P_j \right)^{-1} - \Acli  (P_i -Q)^{-1} \Acli^\top \succcurlyeq 0.
 \end{align}
Differently from the continuous-time case,   the first term of \eqref{eq:DTdual} is the \emph{inverse of the sum} of variables, a nonlinearity that cannot be dealt with by using the Schur complement.  
To cope with this complication, we leverage  the Woodbury identity, which we hereby recall:
\begin{align}\label{eq:Woodbury}
(D+UCV)^{-1}=\tilde D-\tilde DU(\tilde C +V\tilde D U)^{-1}V \tilde D,
\end{align}
for any suitably invertible and sized matrices $C,D,U,V$.    
The following result provides a \gls{DT} counterpart of \cref{th:main} and it is the main result of this section. 

\begin{theorem}[\gls{DT} data-driven LM inequalities]\label{th:DTmain}
Assume that the Slater's condition in \eqref{eq:Slaters} holds, and let  $\Pbs \coloneqq \diag ((P_i)_{j\in \I \backslash \{i\}})$, $\bs{\Lambda}_{-i} \coloneqq \diag ((\lambda_{j,i}I_n)_{j\in \I \backslash \{i\}}) $, for all $\i$. Then, $(\{P_i\succ 0\}_{\i},\{K_i\}_\i,\Lambda \in \mc{S}_{+},Q\succ 0)$ solve \eqref{eq:DTdual} if and only if there exist $\{\alpha_i\geq 0,Y_i \succ 0\}_{\i}$ such that
$(\{P_i,Y_i,L_i\coloneqq K_i Y_i,\alpha_i \}_\i,\Lambda,Q)$ verify the inequality \eqref{eq:main_DT}, where  $\Phi_i$ is as in \cref{asm:disturbancemodel}, for all $\i$. \hfill $\square$
\end{theorem}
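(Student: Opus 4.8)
The plan is to follow the route of \cref{th:main}---dualize, then invoke the matrix S-lemma---while additionally linearizing the two ``inverse'' nonlinearities specific to discrete time: the inverse-of-a-sum $\big(\sum_{j\in\I}\lambda_{j,i}P_j\big)^{-1}$ and the term $\Acli(P_i-Q)^{-1}\Acli^\top$. First I would put \eqref{eq:DTdual} into the quadratic form required by \cref{lem:S}. Writing $\Acli^\top=\begin{bmatrix}0 & I & K_i^\top\end{bmatrix}\begin{bmatrix}I\\A_i^\top\\B_i^\top\end{bmatrix}$ and $S_i\coloneqq\sum_{j\in\I}\lambda_{j,i}P_j$, the left-hand side of \eqref{eq:DTdual} equals $\begin{bmatrix}I\\A_i^\top\\B_i^\top\end{bmatrix}^\top G_i\begin{bmatrix}I\\A_i^\top\\B_i^\top\end{bmatrix}$ with
\[
G_i\coloneqq\begin{bmatrix}S_i^{-1} & 0 & 0\\ 0 & -(P_i-Q)^{-1} & -(P_i-Q)^{-1}K_i^\top\\ 0 & -K_i(P_i-Q)^{-1} & -K_i(P_i-Q)^{-1}K_i^\top\end{bmatrix}.
\]
By \cref{lem:S}, applied with $Z=\begin{bmatrix}A_i^\top & B_i^\top\end{bmatrix}^\top$ and the data matrix $H_i$ of \eqref{eq:compatibility_ineq}, \eqref{eq:DTdual} holds for all $(A_i,B_i)\in\C_i$ if and only if (the converse using the Slater condition \eqref{eq:Slaters}) there is $\alpha_i\ge0$ with $G_i-\alpha_iH_i\succcurlyeq0$. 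It then remains to prove that, after suitable Schur complements, \eqref{eq:main_DT} reduces to exactly this inequality.

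The crux is to represent $S_i^{-1}$ without inverting a sum of variables, using the Woodbury identity \eqref{eq:Woodbury}. Taking $D=\lambda_{i,i}P_i$, $C=\bs{\Lambda}_{-i}\bs{P}_{-i}=\diag((\lambda_{j,i}P_j)_{j\ne i})$, $U=\1_{N-1}^\top\otimes I_n$, and $V=\1_{N-1}\otimes I_n$, so that $D+UCV=S_i$, formula \eqref{eq:Woodbury} shows that $S_i^{-1}$ is precisely the Schur complement, with respect to its top-left block, of
\[
\begin{bmatrix}\tilde{\bs{\Lambda}}_{-i}\tilde{\bs{P}}_{-i}+(\1\otimes I)\tilde\lambda_{i,i}\Pit(\1\otimes I)^\top & (\1\otimes I)\tilde\lambda_{i,i}\Pit\\ \tilde\lambda_{i,i}\Pit(\1\otimes I)^\top & \tilde\lambda_{i,i}\Pit\end{bmatrix},
\]
which is exactly the upper-left $2\times2$ block of \eqref{eq:main_DT}. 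This converts $S_i^{-1}$ into quantities affine in the decision variables $\Pit$ and $\tilde\lambda_{i,i}$; since the block is positive definite (as $\lambda_{j,i}>0$ and $P_j\succ0$), the Schur complement is an equivalence.

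It remains to absorb $\Acli(P_i-Q)^{-1}\Acli^\top$ through the slack $Y_i$ and the change of variable $L_i\coloneqq K_iY_i$. Taking the Schur complement of the bottom-right block $\begin{bmatrix}-Y_i & -L_i^\top & 0\\ -L_i & 0 & L_i\\ 0 & L_i^\top & Y_i\end{bmatrix}$ with respect to its $Y_i$-corner ($Y_i\succ0$) turns its central entry into $-L_iY_i^{-1}L_i^\top$, leaving $-\begin{bmatrix}I\\K_i\end{bmatrix}Y_i\begin{bmatrix}I & K_i^\top\end{bmatrix}$, whose $\begin{bmatrix}A_i & B_i\end{bmatrix}$-quadratic form is $-\Acli Y_i\Acli^\top$. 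Eliminating the first and last block rows/columns by the two Schur complements collapses \eqref{eq:main_DT} to $\widehat G_i-\alpha_iH_i\succcurlyeq0$, where $\widehat G_i$ is $G_i$ with $(P_i-Q)^{-1}$ replaced by $Y_i$. Finally, a Schur complement of the second side constraint of \eqref{eq:main_DT}, together with the identity $(P_i-Q)^{-1}=\Pit+\Pit(\tilde Q-\Pit)^{-1}\Pit$ (itself an instance of \eqref{eq:Woodbury}), shows that the two side constraints are equivalent to $P_i-Q\succ0$ and $Y_i\succcurlyeq(P_i-Q)^{-1}$. Hence $\widehat G_i\succcurlyeq G_i$, which delivers sufficiency; and the admissible choice $Y_i=(P_i-Q)^{-1}$ makes $\widehat G_i=G_i$, delivering necessity. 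Because each pivot (the Woodbury block and $Y_i$) is positive definite, every Schur complement is reversible, so the whole chain is an ``if and only if''.

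The main obstacle is exactly the inverse-of-a-sum $S_i^{-1}$: unlike in \cref{th:main}, no single Schur complement linearizes it, and it is the Woodbury identity that realizes $S_i^{-1}$ as a Schur complement of an affine block---this is what forces the extra block row/column in \eqref{eq:main_DT}. The only remaining care is bookkeeping: checking that the $\alpha_iH_i$ term occupies only the central block rows/columns (so that the two Schur complements act independently and cleanly) and that every pivot is positive definite, which is what upgrades the reduction from a one-way implication to an equivalence.
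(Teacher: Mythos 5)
Your proposal is correct and takes essentially the same route as the paper's proof: the Woodbury identity realizing $(\sum_{j\in\I}\lambda_{j,i}P_j)^{-1}$ as the Schur complement of the affine upper-left block, a second Schur complement at the $Y_i$-corner yielding $-L_i\tilde Y_iL_i^\top$, the S-lemma (with Slater's condition for the converse), the side constraints read as $P_i-Q\succ 0$ and $Y_i\succcurlyeq(P_i-Q)^{-1}$, and necessity via the choice $Y_i=(P_i-Q)^{-1}$. The only slip is a reversed inequality at the end: since $Y_i$ and $(P_i-Q)^{-1}$ enter the matrices with a minus sign, $Y_i\succcurlyeq(P_i-Q)^{-1}$ gives $G_i\succcurlyeq\widehat G_i$ (not $\widehat G_i\succcurlyeq G_i$), and it is precisely this direction that delivers sufficiency, via $G_i-\alpha_iH_i\succcurlyeq\widehat G_i-\alpha_iH_i\succcurlyeq 0$ followed by the S-lemma.
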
  
\begin{proof}
By applying the Schur complement with respect to the top-left block in the first inequality of \eqref{eq:main_DT}, we get a (4-by-4 block) inequality, where the component of the top-left block derived by the first matrix (i.e., excluding the matrix multiplied by $\alpha_i$ in \eqref{eq:main_DT}) is, with $\ast = \tilde \lambda_{i,i} \Pit$ for brevity,
\begin{align*}
    & \ast -\ast (\1 \otimes I)^\top \left( \tilde{\bs{\Lambda}}_{-i}\tilde {\bs{P}}_{\ms -i} + (\1 \otimes I)\ast(\1 \otimes I)^\top \right)^{-1}(\1 \otimes I)\ast
    % & \begin{aligned}   - \tilde P_i   (\1 \otimes I)^\top \left( \tilde{\bs{\pi}}_{-i}\tilde {\bs{P}}_{\ms -i} + (\1 \otimes I)\tilde P_i (\1 \otimes I)^\top \right) ^{-1}\tilde P_i   (\1 \otimes I) \\ + \tilde \lambda_{i,i}\tilde P_i\end{aligned}
    \\
    & = ( \lambda_{i,i}  P_i +(\1 \otimes I)^\top {\bs{\pi}}_{-i} {\bs{P}}_{\ms -i} (\1 \otimes I) )^{-1}
    \\ 
    & = (\textstyle \sum_{j\in \I} \lambda_{j,i} P_j)^{-1} 
\end{align*}
where the first equality is due to \eqref{eq:Woodbury}. Subsequently, by applying the Schur complement to the bottom right block we get a (3-by-3 block) inequality where the component of the bottom-right block derived by the first matrix is $ - L_i \tilde Y_i  L_i ^\top$. Then, by \cref{lem:S}, the first inequality of \eqref{eq:main_DT} is equivalent to: for all $(A_i,B_i)\in\C_i$
\begin{align}\label{eq:DTdualform}
    \begin{bmatrix}
    I \\ A_i ^\top \\ B_i^\top 
    \end{bmatrix}^\top \ms 
     \ms  \ms \begin{bmatrix}
        (\sum_{j\in \I} \lambda_{j,i} P_j)^{-1} & 0  & 0
        \\
        0 & -Y_i & -L_i^\top \\
        0 & -L_i ^\top  & - L_i \tilde Y_i L_i^\top
    \end{bmatrix} \ms \ms \ms 
  \begin{bmatrix}
    I \\ A_i ^\top \\ B_i^\top 
    \end{bmatrix} \ms \succcurlyeq 0.
\end{align}
The second inequality in \eqref{eq:main_DT} forces $P_i - Q \succ 0$, while the third, by  Schur complement,  is
\begin{align}
    Y_i & \succcurlyeq \Pit - \Pit (\Pit -\tilde Q)^{-1} \Pit = (P_i -Q)^{-1},
\end{align}
where the equality is again \eqref{eq:Woodbury}. Together with \eqref{eq:DTdualform} and by definition of $L_i$,  this implies
\eqref{eq:DTdual}; conversely, if  \eqref{eq:DTdual} holds, there exists $\alpha_i$ that satisfies \eqref{eq:main_DT}   with $Y_i = (P_i-Q)^{-1}$.
\end{proof}

As an immediate consequence of \cref{th:DTmain} and \cite[Lem.~2]{GeromelColaneri_IJC2006}, we have the following result.

 \begin{corollary}[\gls{DT} stabilization] \label{cor:DTmain}
Assume that \sloppy $(\{  P_i\succ 0,\alpha_i \geq 0,L_i,Y_i \succ 0,  \}_\i, Q\succ  0,\Lambda\in\mc{S}_+)$ solve \eqref{eq:main_DT}  for all $\i$.  Then the controller \eqref{eq:control} with $K_i= L_i \tilde Y_i$ globally asymptotically stabilizes the discrete-time switched system in \eqref{eq:DTsystem}. Furthermore, it holds that 
\begin{equation*}
  \textstyle  \sum_{k=0} ^\infty x(k)^\top Q x(k) \leq \min_{\i} x(0)^\top P_i x(0).   \QEDopenhereeqn
\end{equation*}
\end{corollary}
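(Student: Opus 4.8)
The plan is to reduce the data-driven statement to the model-based stabilization guarantee of \cite[Lem.~2]{GeromelColaneri_IJC2006}, using \cref{th:DTmain} as the bridge between the data-dependent inequality \eqref{eq:main_DT} and the robust condition \eqref{eq:DTdual}. First I would invoke the sufficiency direction of \cref{th:DTmain}: it shows that if the given tuple $(\{P_i,Y_i,L_i,\alpha_i\}_\i,\Lambda,Q)$ verifies \eqref{eq:main_DT}, then, setting $K_i = L_i \tilde Y_i$, the inequality \eqref{eq:DTdual} holds for every $(A_i,B_i)\in\C_i$. Crucially, this implication rests only on the Schur-complement and Woodbury manipulations in the proof of \cref{th:DTmain} and on the sufficiency part of the matrix S-lemma (\cref{lem:S}), none of which require the Slater qualification \eqref{eq:Slaters}; hence it is available under the hypotheses of the corollary, which omit \eqref{eq:Slaters}.

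Next I would specialize to the unknown true plant. Since $(\bar A_i,\bar B_i)\in\C_i$ by construction, \eqref{eq:DTdual} holds in particular for the true matrices. Undoing the double Schur complement that produced \eqref{eq:DTdual} from \eqref{eq:DTLM:b} returns exactly the nonstrict \gls{DT} Lyapunov--Metzler inequality \eqref{eq:DTLM:b} for the true closed-loop matrices $\Acli = \bar A_i + \bar B_i K_i$, with $\{P_i\succ 0\}_\i$, $\Lambda\in\mc{S}_+$, $Q\succ 0$ and $P_i - Q\succ 0$. As already noted before \eqref{eq:DTLM:b} (in analogy with \cref{lem:strictnonstrict}), the nonstrict form with $Q\succ 0$ carries the same stabilization content as the strict form \eqref{eq:DTLM}; \cite[Lem.~2]{GeromelColaneri_IJC2006} then guarantees that the min-switching controller \eqref{eq:control}, with Lyapunov function $V(x)=\min_\i x^\top P_i x$, renders \eqref{eq:DTsystem} globally asymptotically stable.

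For the performance bound I would make the Lyapunov decrease explicit. At a state $x$ with active mode $i^\star \in \argmin_\i x^\top P_i x$, the convex-combination bound $V(\Acl_{i^\star}x) = \min_j (\Acl_{i^\star}x)^\top P_j (\Acl_{i^\star}x) \leq \sum_j \lambda_{i^\star,j}(\Acl_{i^\star}x)^\top P_j (\Acl_{i^\star}x)$, together with \eqref{eq:DTLM:b} evaluated at $i^\star$, yields $V(x(k+1)) \leq V(x(k)) - x(k)^\top Q x(k)$ along closed-loop trajectories. Summing this telescoping inequality from $k=0$ to $\infty$ and using $V(x(k))\to 0$ (from asymptotic stability) together with $V(x(0)) = \min_\i x(0)^\top P_i x(0)$ delivers the claimed bound $\sum_{k=0}^\infty x(k)^\top Q x(k) \leq \min_\i x(0)^\top P_i x(0)$.

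The main obstacle is the strict-versus-nonstrict gap: \cite[Lem.~2]{GeromelColaneri_IJC2006} is phrased for the strict inequality \eqref{eq:DTLM}, whereas \cref{th:DTmain} only delivers the nonstrict \eqref{eq:DTLM:b} for the true system. I expect to close this by exploiting $Q\succ 0$: the decrease $V(x(k+1)) - V(x(k)) \leq -x(k)^\top Q x(k) \leq -\lambda_{\min}(Q)\|x(k)\|^2$ is strict and uniformly bounded away from zero, which already certifies exponential asymptotic stability, so the nonstrict hypothesis with a strictly positive $Q$ loses nothing. A secondary point to verify is the stochasticity convention for $\Lambda\in\mc{S}_+$ (column- versus row-sum normalization), so that the convex-combination step matches the index pattern $\sum_j \lambda_{i^\star,j}P_j$ appearing in \eqref{eq:DTLM:b}.
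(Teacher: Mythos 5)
Your proposal is correct and follows essentially the same route as the paper, which proves \cref{cor:DTmain} simply as an ``immediate consequence'' of the sufficiency direction of \cref{th:DTmain} (which, as you correctly note, does not require the Slater condition \eqref{eq:Slaters}) applied to the true pair $(\bar A_i,\bar B_i)\in\C_i$, together with the model-based guarantee of \cite[Lem.~2]{GeromelColaneri_IJC2006}. The details you supply---the telescoping decrease of $v_{\min}(x)=\min_{\i} x^\top P_i x$, the observation that $Q\succ 0$ closes the strict/nonstrict gap, and the check on the stochasticity convention for $\Lambda$ (where the paper's own indexing is inconsistent between \eqref{eq:DTLM} and \eqref{eq:DTdual})---are precisely what the paper leaves implicit.
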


% \begin{corollary}[\gls{DT} stabilization] \label{cor:DTmain}
% Assume that \sloppy $(\{ \tilde P_i\succ 0,\alpha_i \geq 0,L_i,Y_i \succ 0, \lambda_{i,i} >0 \}_\i,\{\lambda_{i,j}>0,\tilde \lambda_{i,j}>0 \}_{i\neq j \in \mc{I}}, Q\succ  0)$ solve \eqref{eq:main_DT} and satisfy
% \begin{align*}
% (\forall  j\in \mc{I} \backslash \{i\}) \quad  \tilde \lambda_{i,j} \lambda_{i,j} = 1, \quad 
%     \ \lambda_{i,i}+\textstyle \sum_{j \in \mc{I}\backslash \{i\} }  \lambda_{i,j} = 1,
% \end{align*} for all $\i$.  Then the controller \eqref{eq:control} with $K_i= L_i \tilde Y_i$ globally asymptotically stabilizes the discrete-time switched system in \eqref{eq:DTsystem}. Furthermore, it holds that 
% \begin{equation*}
%   \textstyle  \sum_{k=0} ^\infty x(k)^\top Q x(k) \leq \min_{\i} x(0)^\top P_i x(0).   \QEDopenhereeqn
% \end{equation*}
% \end{corollary}
% \begin{proof}
%   By \cref{th:DTmain} and \cite[Lem.~2]{GeromelColaneri_IJC2006}.
% \end{proof}

% In \cref{cor:DTmain} we have explicitly cast \eqref{eq:main_DT} as a \gls{BMI}, via the auxiliary variables $\lambda_{i,j}$ and additional bilinear constraints. This complication (that is also present in \eqref{eq:main}) can be avoided when a suitable relaxation is employed, similarly to \cref{sec:relaxations} (for example, \eqref{eq:main2} is already a \gls{BMI}, and the condition $\Pi \in \mc{M}$ is automatically satisfied).

Casting \eqref{eq:main_DT} as a \gls{BMI} requires some auxiliary variables and constraints to impose $\Lambda \in \mc{S}_+$, analogously to \eqref{eq:main}-\eqref{eq:rankconstrained}. Nonetheless, as in \eqref{eq:main2}, this complication can be avoided  when employing a relaxation analogous to \eqref{eq:specialPi_convex} (i.e., imposing $\lambda_{j,i} = \gamma_i$, for all $j\neq i$ and some $0<\gamma_i<\frac{1}{N}$). Moreover, \eqref{eq:main_DT} is an \gls{LMI} when $\Lambda$ is fixed; this is relevant, for instance  because existence of a solution to \eqref{eq:DTLM} is \emph{equivalent} to stability of a \gls{DT} Markov jump linear system with transition matrix $\Lambda^\top$ (and system matrices $\Acli$), cf. \cref{ex:Markov}.

Finally, similar techniques to \cref{th:DTmain} can  be leveraged to address data-driven $\H_2$ and $\H_\infty$ problems for \gls{DT} switched linear systems, for example based on the results in \cite{Fioravanti:Goncalves:Deaecto:Geromel:Alternative:Franklin:2013}.

%%%%%%%%%%%%%%%%%%%%%%%%%%%%%%%%%%%%%%%%%%%%%%%%%%%%%%%%%%%%%%%%%%%%%%%%%%%%%%%%%%%%%%%%%%%%%%%%%%%%%%%%%%%%%%%%%%%%%%%%%%%%%%%%%%%%%%%%%%%%%%%%%%%%%%%%%%%%%%%%%%%%%%%%%%%%%%%%%%%%%%%%%%%%%%%%%%%%%%%%%%%%%%%%%%%%%%%%%%%%%%%%%%%%%%%%%%

}

\section{Switched data-driven compensators for robust constrained stabilization of \gls{LTI} systems}\label{sec:Constrained}

% \setcounter{equation}{35}

%
% \marginJC{Maybe spend 1/2 more sentences here explaining the transition: we're no longer considering a switched linear system, but instead using our design ideas above to design switched controllers for a \emph{fixed} plant.}
%
In this section, we build upon the design ideas, developed above for switched systems, to design a switched controller for a \emph{fixed} linear plant. 
In particular,
we study a  robust constrained control problem, motivated by applications where it is paramount to keep a plant in safe operating conditions, despite the presence of disturbances. 
% In particular, we design a \emph{switched} controller with guaranteed invariance region, directly from data and without requiring a model of the system. 
% \color{black}
Let us consider the perturbed \gls{LTI} system
\begin{align}\label{eq:disturbedLTI}
\dot{x} = \bar Ax + \bar Bu +E \dist,
\end{align}
% with $E \in \R^n\times q$ a known matrix,
$x\in \R^n$, $u\in\R^m$, where the (persistent) disturbance $\dist(t) \in \R^q$ satisfies $\dist(t)^\top \dist(t) \leq 1$ for all times, subject to  polyhedral  state constraints described by  $x \in \mc{X}$,
\begin{align}
    \mc{X} \coloneqq \left\{ x \mid |c_j ^\top x| \leq 1,  \forall j \in \mc{J} \coloneqq \{1,2,\dots,M\} \right\},
\end{align}
for some $\{c_j\in \R^n \}_{j \in \mc{J}}.$
%
% \marginJC{And $c_j$ are what?}
%
One fundamental challenge is to find an (as large as possible) set $\mc{X}_0 \subseteq \mc{X}$, together with a feedback controller that makes the set $\mc{X}_0$ invariant for the closed-loop dynamics \eqref{eq:disturbedLTI} --for any possible $\dist$-- and ensures asymptotic stability for the nominal system in the absence of disturbance. 
% Related problems were recently addressed in a data-driven context, e.g., in \cite{Luppi:Bisoffi:DePersis:Tesi:SafePolynomial:arXiv2021}  for polynomial systems.
Here we draw from \cite{Thibodeau:Tong:Hu:SetInvariance:AUT:2009}, that leverages the Lyapunov function $v(x) = \max\{ x^\top P_0x, x^\top c_1 c_1^\top x,\dots,x^\top c_N c_N^\top x \} $ (for some $P_0 \succ 0$) for the estimation of maximal invariant sets. 
We depart from  \cite{Thibodeau:Tong:Hu:SetInvariance:AUT:2009} by designing a novel switched controller, that further ensures asymptotic stability, and without requiring identification of the system in \eqref{eq:disturbedLTI}.
% We depart from  \cite{Thibodeau:Tong:Hu:SetInvariance:AUT:2009} by designing a switched controller with guaranteed invariance region, directly from data and without requiring identification of the system \eqref{eq:disturbedLTI}.

In particular, we assume that the matrix $E\in \R^{n\times q}$ is known, but the only information available on the matrices $(\bar A, \bar B)$ is a set of data satisfying \eqref{eq:data}\footnote{Since $N$ =1, we omit the subscript $i=1$ in this section, see also \eqref{eq:maininvariance}.}. For each $j \in \mc{J}$, we define
\begin{align}\label{eq:Pj}
P_j \coloneqq c_jc_j^\top + \nu I  \succ 0,
\end{align}
where $\nu >0 $ is a fixed design regularization constant, and
\begin{align}\label{eq:Vmax}
    v_{\max} (x) \coloneqq \max \left \{ x^\top P_j x \mid    j \in \mc{J}_0\coloneqq \mc{J} \cup \{0\} \right\},
\end{align}
where $P_0 \succ 0$ is to be designed. Furthermore let $\mc{X}_0$ be the $1$-sublevel set of $v_{\max}$, i.e., 
\begin{align}\label{eq:X0}
    \mc{X}_0 \coloneqq \{ x \in \R^n \mid v_{\max} (x) \leq 1 \} \subset \mc{X}.
\end{align}
\begin{theorem}[Data-driven safe stabilization] \label{th:invariance}
Let $(\{P_j \succ 0 , L_j, \beta_j \geq 0, \alpha_j \geq 0\}_{j\in \mc{J}_0 }, \Pi = [ \pi_{j,k}]_{j,k \in \mc{J}_0} \in \Mp)$ satisfy \eqref{eq:maininvariance}, for all $j \in \mc{J}_0$.
Consider the controller 
\begin{align}\label{eq:inputswitchmax}
    \sigma(x) & =  \min \{ \operatorname{argmax}_{\jzero} x^\top P_j x\}, \quad 
    u(x) = K_{\sigma(x)} x,
\end{align}
where $K_j \coloneqq L_jP_j$. Then, any Carathéodory solution $x:\R_{\geq 0} \rightarrow \R^n$ of the closed-loop system \eqref{eq:disturbedLTI}, \eqref{eq:inputswitchmax}, with  $x(0) \in \mc{X}_0 $, satisfies $x(t) \in \mc{X}_0$ for all $t\geq 0$. Further, if $\beta_j >0$ for all $\jzero$ and $\dist(t) \rightarrow 0$ for $t \rightarrow \infty$, then also $x(t) \rightarrow 0$. \hfill $\square$ 
\end{theorem}
\begin{pf}
By taking the Schur complement with respect to the top-left block and by \cref{lem:S}, we deduce that \eqref{eq:maininvariance} implies: for all  $  \jzero$, for all $ (A,B)\in \C,$
\begin{align*}
%  \label{eq:useful_invariance}
-\tilde P_j \Aclj^\top -  \Aclj \tilde P_j - \beta_j\Pjt - \tilde\beta_j EE^\top+  \textstyle \underset{k \in \mc{J}_0}{\textstyle \sum} \pi_{k,j} \Pjt P_k \Pjt \succcurlyeq  0,
\end{align*}
where $\Aclj \coloneqq A+BK_j$. Multiplying both sides by $P_j$, we obtain via  Schur complement that
\begin{align}\label{eq:useful_invariance}
    \begin{bmatrix}
          \Aclj ^\top P_j+ P_j \Aclj + \beta_j P_j - \underset{k\in \J_0 }{\textstyle \sum } \pi_{j,k}P_k  & \star \\[1.5em]
          E^\top P_j & -\beta_j I
    \end{bmatrix} \preccurlyeq 0
\end{align}
    for all $\jzero$.
 We recall that the directional derivative of $ v_{\max}$ along $\zeta$ is $\dot v_{\max} (x,\zeta) = \max \{ 2x^\top  P_j \zeta \mid j \in \mathrm{I}_{\max} (x)\}$, where $I_{\max} (x) = \argmax_{ \jzero} x^\top P_j x $ \cite[Eq.~5]{Thibodeau:Tong:Hu:SetInvariance:AUT:2009}. By multiplying \eqref{eq:useful_invariance} on the left by any $ [ 
     x^\top \ \  \dist^\top ]$ and on the right by its transpose, we have for some $j \in \mathrm{I}_{\max} (x) $ that
    \begin{align}
    \nonumber
        & \hphantom{{}\leq{}}2\dot v_{\max} (x,\Aclj x+E\dist) = 
     2x^\top  P_j (\Aclj x+E\dist) 
     \\ 
     \label{eq:invariance_usstep_2}
     & \leq  {\textstyle \sum_{k\in \J_0} } \pi_{j,k} \ x^\top P_k x + \beta_j (\dist ^\top \dist  - x^\top P_j x ) \end{align}
     Note that ${\textstyle \sum_{k\in \J_0 }} \pi_{j,k} \ x^\top P_k x \leq 0$ (because $j \in \mathrm{I}_{\max} (x)$ and $\Pi \in \M)$. The conclusion follows because the second addend in  \eqref{eq:invariance_usstep_2} is: nonpositive, for each $x$ on the boundary of $\mc{X}_0$ (as $x^\top P_j x  = 1$ and $\dist^\top \dist \leq 1)$; negative, for any  $x\neq 0$ and small enough disturbance $\dist$, if $\beta_j >0$.  \hfill $\blacksquare$
\end{pf}
% Some remarks on the conditions in  \eqref{eq:maininvariance} are necessary. First, the matrices $\{ P_j \}_{j\in \mc{J}}$ are fixed  a priori   --they are not variables. We can reformulate \eqref{eq:maininvariance} as a \gls{BMI} in the variables ($\{\beta_j,L_j,\alpha_j\}_{j\in\mc{J}_0},\Pi,\tilde P_0, P_0, \tilde Y_0, Y_0) $, where the auxiliary positive definite variables $P_0, \tilde Y_0, Y_0 \in \R^{n\times n}$ satisfy 
% \begin{align*}
% \begin{aligned} 
% P_0 \tilde P_0 = I, \  Y_0 \tilde Y_0 = I, 
% \ 
% \end{aligned} \
%     \begin{bmatrix}
%          \tilde Y_0 & P_j \\
%           P_j & \sum_{k \neq 0} \pi_{k,0}P_k
%     \end{bmatrix} \succcurlyeq 0;
% \end{align*}
%  the last inequality equals $Y_0 \succcurlyeq  \Pjt(\sum_{k \neq j} P_k ) \Pjt  $, and provides an upper bound for the trilinear term in the block (2,2) in \eqref{eq:maininvariance} (only arising for $j=0$). This nonlinear term, due to dualization, is not present in the model-based formulation \eqref{eq:useful_invariance}; it results in a  nonconvex condition (so it cannot dealt with via \glspl{LMI}) and it is in general impossible to avoid (because of tightness of the S-lemma);
%  it does not arise in our data-driven formulations of the \gls{LM} inequalities, where the coupling term has the opposite sign (cf. \eqref{eq:useful_invariance}, \eqref{eq:LM1}). This also means that \eqref{eq:maininvariance} is bilinear even when $\Pi$ is fixed. 
If we further impose the structure in \eqref{eq:specialPi_convex} for the \Metzler matrix $\Pi$, \eqref{eq:maininvariance} simplifies as in \eqref{eq:maininvariance2} (via the changes of variable $\bs{\beta}_j = \tilde \gamma_j \beta_j >0$, $ \bs{L_j}=\tilde \gamma_j L_j$, $\bs \alpha_j =\tilde \gamma_j \alpha_j >0$), which is a \gls{BMI}. Besides being easier to solve (see \cref{sec:numerics:switched}), an advantage of \eqref{eq:maininvariance2} is that the number of bilinear terms does not depend on the number of constraints $M$ (note that the matrices $\{ P_j \}_{j\in \mc{J}}$ are fixed  a priori   --they are not variables).

% A larger number of \emph{variable} quadratic components in \eqref{eq:Vmax} can also reduce conservatism, in exchange for increased complexity.
\begin{remark}[Maximizing $\mc{X}_0$] \label{rem:maxX0}
To maximize the volume of the invariant set $\mc{X}_0$, we can fit inside $\mc{X}_0$ an ellipsoid $\mc{X}_Q\coloneqq \{x\in \R^n \mid x^ \top Q x \leq 1\}$ of maximal volume (by imposing $\tilde Q \preccurlyeq \tilde P_j$ for all $j\in \mc{J}_0$, and minimizing the convex cost $-\operatorname{log} (\operatorname{det}(\tilde Q)$), where $\tilde Q \succ 0$ is a new variable). 
% \begin{align*}
% % \label{eq:optimization2}
% \begin{matrix}
%     \underset{ \begin{smallmatrix}\{ \alpha_j\geq 0,L_j,Y_j=Y_j^\top,\beta_j \geq 0,\}_{j\in\mc{J}_0},\\
%     \Pi\in\mc{M_+},P_0\succ 0, Q\succ 0 \end{smallmatrix} }{\min} &  -\operatorname{log} (\operatorname{det}(Q))
%     \\ \text{ s.t.} & \forall j \in \mc{J}_0: \text{  \eqref{eq:maininvariance}},  Q \succcurlyeq P_j. 
%     \end{matrix}
% \end{align*}
Similarly, $\mc{X}_0$ can  be maximized with respect to a reference shape  \cite[Eq.~20]{Thibodeau:Tong:Hu:SetInvariance:AUT:2009}. Note that in \eqref{eq:Pj} we include a positive regularization weighted by $\nu$, which allows for the inversion of $P_j$. A smaller value of $\nu$  reduces conservatism and can result in a larger guaranteed invariant set $\mc{X}_0$. \hfill $\square$ \end{remark}

\begin{remark}[Input saturation]\label{rem:inputconstraints}
Input constraints can be included in the design via additional sufficient  \glspl{LMI}. For instance, we can enforce $\|u \|_{\infty} \leq 1$ by imposing, for each $j \in \mc{J}_0$, with $L_{j,l}$ being the $l$-th row of $L_j$,
\begin{align}\label{eq:inputconstraints}
     \begin{bmatrix}
          1 & L_{j,l} \\ L_{j,l}^\top & \tilde P_j
    \end{bmatrix} \succcurlyeq 0, \quad \forall l \in \{1,2\dots,m\}.
\end{align}
 These inequalities ensure $\| K_j x \|_\infty  \leq 1$ for any $x \in \mc{E}_{P_j} \coloneqq \{x \mid x^\top P_j x \leq 1 \} \supseteq \mc{X}_0$. Interestingly, for $j\neq 0$, this condition is more conservative if $\nu$ is small. \hfill $\square$
\end{remark}

%
% \marginJC{Presentation-wise, this is a bit odd: we have a subsubsection, but not subsection in this section.}
%
\begin{remark}[On chattering]
While the results in \cref{sec:main}-\ref{sec:Extensions:H} are based on the Lyapunov function $v_{\min}$ in \eqref{eq:vmin} (Appendix \ref{app:LM}), \cref{th:invariance} leverages $v_{\max}$ in \eqref{eq:Vmax}. An important difference, noted in \cite[p.~71]{Liberzon2003}, is that the switching rule \eqref{eq:inputswitchmax} cannot guarantee invariance or stability if sliding mode occurs --in fact, \cref{th:invariance} only considers  Carathéodory solutions. This is not a problem if $m=0$, i.e., the goal is to estimate a maximal invariant set, in the spirit of \cite{Thibodeau:Tong:Hu:SetInvariance:AUT:2009}. 
Otherwise, chattering can be avoided by using a linear controller. However, imposing $K_j = K$ for all $j$'s in \eqref{eq:maininvariance} results in nonlinear constraints (efficiently enforcing this condition is an interesting topic for future research). Furthermore, a switched controller is much more powerful than a linear gain in achieving set invariance.
Alternatively,   \cite[Th.~6]{Hu:Nonlinear:AUT:2007} constructs a  nonlinear (continuous) controller, by solving the following \glspl{BMI}: find $\{Q_j \succ 0\}_{j\in\mc{J}}$, $\beta>0$, $\Pi\in \mc{M}$ such that for all $j\in \mc{J}$,
% \begin{align}\label{eq:convexhull_condition}
%     \Acl \Pjt + \Pjt \Acl   + \beta \tilde P_j - \textstyle\sum_{j\neq k}    \pi_{k,j}(\tilde P_k - \tilde P_j) \preccurlyeq  0.
% \end{align}
$
    \Acl Q_j + Q_j \Acl   + \beta  Q_j - \textstyle\sum_{j\neq k}    \pi_{k,j}( Q_k - Q_j) \preccurlyeq  0.$
with $c_\ell ^\top Q_j c_\ell \leq 1$ for all $\ell \in \mc{J}$.
This inequality is already in ``dual'' form, hence a  data-driven version can be obtained via \cref{lem:S} without introducing additional nonlinearities; on the other hand, it has more variables than \eqref{eq:maininvariance}, as the matrices $Q_j$'s are not fixed. \hfill $\square$
\color{black}
\end{remark}

\section{Numerical examples}\label{sec:Numerics}
\begin{figure}[t]
		\centering
\includegraphics[width=\columnwidth]{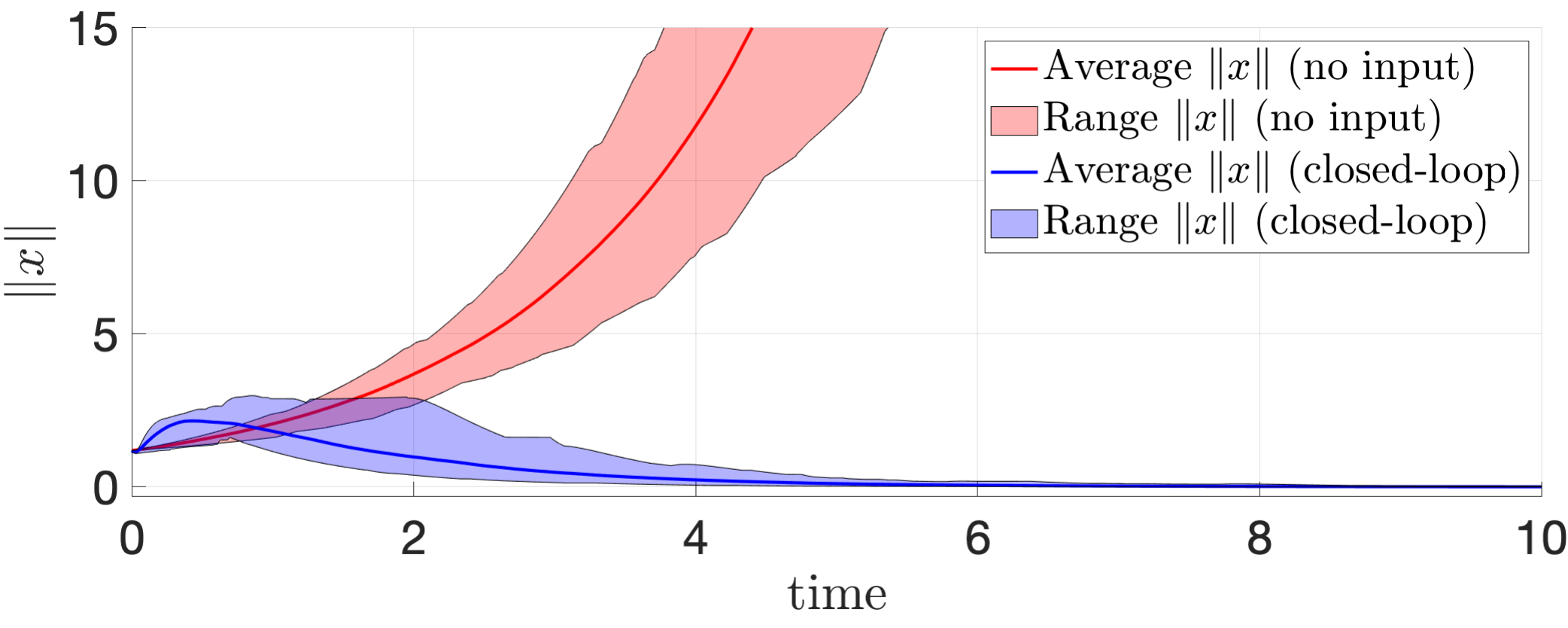}	\caption{Data-driven stabilization of a Markov jump linear system. The results are obtained by simulating 100 stochastic trajectories with the same initial condition. \label{fig:1}
}
	\end{figure}
\noindent We illustrate our results in the data-driven control of Markov jump, switched, and constrained linear systems.

\subsection{Data-driven control of Markov jump linear systems}\label{sec:numerics:MJLS}
\vspace{-0.7em}
We consider a \gls{MJLS} as in \cref{ex:Markov}, with $N=3$, $n =3$, $m = 2$,
\begin{align*}
\setlength\arraycolsep{6pt}
    \bar A_1 =\begin{smallbmatrix}
    0.5 & 0.5 & 0.3 \\ 0.1 & 0.5 & 0 \\ 0 & 0.4 & 0.3
    \end{smallbmatrix}\!,
     \bar A_2 =\begin{smallbmatrix}
    0.3 & 0.2 & 0 \\ 0 & 0 & 0 \\ 0 & 0.2 & 0.5
    \end{smallbmatrix}\!,
    \bar A_3 =\begin{smallbmatrix}
    0 & 0.1 & 0.2 \\ 0.1 &  0.5  & 0 \\ 0 & 0.1 & 0.3
    \end{smallbmatrix}\!,\\
   \bar  B_1 =\begin{smallbmatrix}
     1 & 0 \\  0 & 1\\ 0 & 0
    \end{smallbmatrix}\!,
     \bar B_2 =\begin{smallbmatrix}
     0 & 0 \\  0 & 0\\ 0 & 0
    \end{smallbmatrix}\!,
   \bar B_3 =\begin{smallbmatrix}
     1 & 0 \\  0 & 1\\ 0 & 0
    \end{smallbmatrix}\!, \Pi = \begin{smallbmatrix}
    -3 & 4 & 5 \\ 3 & -7 & 0 \\ 0 & 3 & -5
    \end{smallbmatrix}.
\end{align*}
It is easily proven that the stochastic system is open-loop unstable (cf. red plot in \cref{fig:1}). We assume that the system matrices are unknown; we simulate a trajectory with random input $u$ and measure $20$ data points for each subsystem, as in \eqref{eq:data}, with disturbance generated to satisfy \cref{asm:disturbancemodel} with $\Phi^i_{1,1} = \epsilon I$, $\Phi^i_{1,2}=0$, $\Phi^{i}_{2,2}= -I$ for all $\i$. We want to find a switching controller $ u = K_\sigma x $ that stabilizes the system by solving the \glspl{LMI} in \eqref{eq:main}; we use MATLAB equipped with YALMIP \cite{Yalmip:Lofberg:2004} (with solver Mosek). Note that the subsystem $i=2$ is not affected by the continuous input, and we assume that this information is available by first principles. To incorporate the prior knowledge in the design, we allow for different input dimensions for the modes, i.e., $m_1 = m_3 =2 $, $m_2 = 0$ (this simply correspond to changing the dimension of $L_i$ with $i$ in \eqref{eq:main}).  For $\epsilon = 10$ (corresponding to signal to noise ratio $\textnormal{SNR} = 10 \log_{10}(\|\dot X_i\|^2_{\textnormal F}/\|W_i\|^2_{\textnormal F}) \approx 25 \textnormal{dB}$), the program returns two stabilizing gains $K_1$, $K_3$; \cref{fig:1} show the resulting closed-loop behavior. For $\epsilon =20$ (SNR$\approx23$dB), the program is unfeasible: as \eqref{eq:main}
provides necessary and sufficient conditions (see \cref{ex:Markov}), this means that there exists no linear switched controller that can quadratically stabilize all the systems unfalsified by the collected data. 
%%%%%%%%%%%%%%%%%%%%%%%%%%%%%%%%%%%%%%%%%%%%%%%%%%%%%%%%%%%%%%%%%%%%%%%%%%%%%%%%%%%%%%%%%%%%%%%%%%%%

\subsection{Data-driven stabilization of switched linear systems}\label{sec:numerics:switched}
\vspace{-0.5em}
\subsubsection{Stable average: solvability and computation time}\label{sec:numerics:switched:A}
\vspace{-0.5em}
We consider a switched linear system as \eqref{eq:system}, with $N=3$, $n=2$, $m=0$, 
\begin{align*}
       \bar A_1 =\begin{smallbmatrix}
         2 & 0.1 \\  0.1 & -0.2
    \end{smallbmatrix}, 
    \bar A_2 =\begin{smallbmatrix}
         -10 & 0.1 \\  0.1 & 0
    \end{smallbmatrix}, 
        \bar A_3 =\begin{smallbmatrix}
         0.1 & 0\\  0 &  0.1
         \end{smallbmatrix}.
         \end{align*}
Each mode is unstable. 
We note that, even in the case of perfectly known model, the  inequality in \eqref{eq:LM_modified} (i.e., the relaxation proposed in \cite{GeromelColaneri_SIAM2006}) does not admit a solution\footnote{In fact, due to the particular form of  $\bar A_3$, for $i=3$ and $j =1$, \eqref{eq:LM_modified} would imply that $(P_j-P_i)$ is negative definite; in turn, \eqref{eq:LM_modified} for $i=1$ and  $j = 3$ reduces to the Lyapunov inequality and would imply that $A_1$ is Hurwitz, which is false.}. Yet, $\frac{1}{3} (\bar A_1+ \bar A_2+ \bar A_3)$ is Hurwitz; thus, the \gls{LM} inequalities  \eqref{eq:LM1} are solvable with $\Pi$ as in \eqref{eq:specialPi_average}, and so must be \eqref{eq:main} for sufficiently informative data/small disturbances, by \cref{lem:existenceofspecialsol}.

We perform simulations for several noise bounds; let
\begin{align}\label{eq:Phiepsilon}
\Phi^i_{1,1} =  \epsilon I, \ \Phi^i_{1,2}=0, \  \Phi^{i}_{2,2}= -I, \quad (\forall \i),
\end{align}
 for different values of $\epsilon \in \{0.1, 1, 10, 20, 40, 80 \} $. For each disturbance level, we generate $100$ datasets satisfying \eqref{eq:data} (where disturbances are randomly generated and  normalized such that $ \| \bar W_i \bar W_i^\top \| = \upsilon \epsilon $, $\upsilon$ uniformly drawn in $[0,1]$, to satisfy \cref{asm:disturbancemodel}), with $T_i = 20$, for all $i$,; the Slater's condition \eqref{eq:Slaters} is verified for all datasets. Our goal is to find suitable matrices $P_i$'s to implement the stabilizing controller \eqref{eq:control}; in particular, we investigate feasibility of \eqref{eq:main2} for $\Pi$ restricted to be  as in \eqref{eq:specialPi_average} and as in \eqref{eq:specialPi_convex}, respectively. In both cases, we solve \eqref{eq:main2} on Matlab, using Yalmip (solver Mosek) and line-search (over one scalar $\gamma$ in the first case, over three scalars $\{\gamma_i\}_{\i}$ in the second; in both cases, between $1$ and $100$). We record the percentage of  experiments for which we could find a solution, and the corresponding solver time on a commercial laptop. The results are shown in \cref{tab:1} (excluding the fourth and seventh columns).

\begin{table}[t] 
{\small
	%	\begin{adjustwidth}{-1cm}{-1cm}
	\setlength{\tabcolsep}{4pt}
	\centering
	\newlength{\mywidth}
	\setlength{\mywidth}{0.8em}
	\begin{tabular}{ c@{\hskip 2em} c c c@{\hskip 2em} c c c }
		% \cline{1-3}
		\toprule
		\multirow{2}{*}{$\epsilon$}
		&
		\multicolumn{3}{l}{Solvability of \eqref{eq:main2} (\%)}
		&
		\multicolumn{3}{l}{Average solver time (s) \hspace{-1em}}
		\\  
		& \begin{tabular}{c}
		     \eqref{eq:specialPi_average}
       \\[-0.5em] LS
		\end{tabular}
	    &	
     \begin{tabular}{c}
     \eqref{eq:specialPi_convex}
       \\[-0.5em] LS
		\end{tabular}
	    &  	
     \glsunset{RM} 
       \begin{tabular}{c}
     \eqref{eq:specialPi_convex}
       \\[-0.5em] \gls{RM}
		\end{tabular}
  % \hspace{-0.5em}\eqref{eq:specialPi_convex}w.\g&
  & \begin{tabular}{c}
		     \eqref{eq:specialPi_average}
       \\[-0.5em] LS
		\end{tabular}
	    &	
     \begin{tabular}{c}
     \eqref{eq:specialPi_convex}
       \\[-0.5em] LS
		\end{tabular}
	    &  	
     \glsunset{RM} 
       \begin{tabular}{c}
     \eqref{eq:specialPi_convex}
       \\[-0.5em] \gls{RM}
		\end{tabular}
		% & \eqref{eq:specialPi_average}
		% & \eqref{eq:specialPi_convex}
		% & \eqref{eq:specialPi_convex}w.\gls{RM}\glsre
  \glsreset{RM}%
% 		& 
% 		& Ls
% 	    & Ls 
% 	    & Rm
% 		& 
% 		& Ls
% 		& Ls
% 		& Rm
% 		&
		\\
		\midrule 
% 		0.01 & 100 & 100 & 100 &  0.75 & 3.95 & 0.26 \\ 
		0.1 & 100 & 100 & 100  &  0.67 & 3.64 & 0.25 \\ 
		1 & 100 & 100 & 100  & 0.60 &  3.45 & 0.25 \\ 
		10 & 100 & 100 & 100 & 0.68 &  12.31 & 0.44 \\ 
	    20 &  71 & 100 & 100 & 0.92 & 21.81 & 0.30 \\ 
		40 & 0 & 100 & 100 &  -- & 47.57 & 0.53 \\ 
		80 & 0 & 69 & 57 & -- & 102.14 &  3.45 \\ 
		 \bottomrule
	\end{tabular}
	\vspace{0.3em}	\caption{\label{tab:1} Percentage of solutions found and total computation time of \eqref{eq:main2}, for the two relaxations \eqref{eq:specialPi_convex} and \eqref{eq:specialPi_average}, for  several noise levels $\epsilon$, and employing either line-search (LS) or \glsfirst{RM} in the solver.}
	%	\end{adjustwidth}
	}
\end{table}

As expected, both conditions admit a solution in all the experiments, for small noise levels. The second condition is by far computationally more expensive, but also less restrictive. In fact, for larger noise bounds (i.e.,  larger compatibility set) imposing \eqref{eq:specialPi_average} becomes too conservative, resulting in unfeasibility --even though the average of the \emph{true} system matrices is Hurwitz.
Intuitively, the gap between the two conditions, in terms of both complexity and conservatism, is bound to further grow if the number of modes $N$  (equivalently, the number of free parameters in \eqref{eq:specialPi_convex}) increases. This is a fundamental trade-off, which also arises in the model-based case. 
% Other (provably convergent) algorithms branch-and-bound algorithm implemented in the Yalmip solver BMIBNB suffer the same scalability issue. 

Alternatively,  we  also seek a solution to \eqref{eq:main2} under \eqref{eq:specialPi_convex} via a \gls{RM} heuristic that, while giving up on theoretical guarantees, exhibits good empirical  performance.  In particular, we exploit the reformulation \eqref{eq:rankcondition}, and we attempt to solve it via the reweighted nuclear norm iteration in \cite{Boyd:RankMinimization:ACC:2003} --recently successfully employed in the context of data-driven control in \cite{Dai_Semialgebraic_LCSS2020,Dai_Switched_CDC2018}; the algorithm consists of a sequence of \glspl{SDP}, which we solve with YALMIP \cite{Yalmip:Lofberg:2004} and Mosek. Note that, although computing a solution to \eqref{eq:main2} is hard in general, verifying a solution (from data only) is straightforward. The results are shown in the remaining columns of \cref{tab:1}. The average solver time improves remarkably with respect to the brute-force line-search approach (even if we compare it with the more conservative case \eqref{eq:specialPi_average}). However, for the largest noise level, the \gls{RM} method fails to return a solution for some experiments  where line-search succeeds.
%
% \marginJC{"However,..." I don't see this in the table!}
%

\subsubsection{Stable convex combination and $\mc{H}_\infty$ performance}\label{sec:numerics:switched:B}
\vspace{-0.5em}

We consider a switched system with disturbances in the form \eqref{eq:system2}, with $N=3$, $n=3$, $m=0$, 
\begin{align*}
    \bar A_1 &= \begin{smallbmatrix}
          -1 & -0.1 & 0.1 \\
          0.1 & 0.1 & 0.1 \\
          -0.1 & -0.1 & 0.1
    \end{smallbmatrix}, \quad 
    E_1 = \begin{smallbmatrix}
          0 & 0 \\ 1 & 0 \\ 0 & 1
    \end{smallbmatrix}
    \\
    \bar A_2 & =
    \begin{smallbmatrix}
          0.1 & -0.1 & 0.1 \\
          0.1 & -0.1 & 0 \\ 
          -0.1 &  0 & 0.1
    \end{smallbmatrix}, \quad E_2 = \begin{smallbmatrix} 
          1 &  0 \\ 0 &  0 \\ 0 & 1
    \end{smallbmatrix}
    \\ 
        \bar A_3 & = \begin{smallbmatrix}
          0.1 &  0.1 & 0.1 \\
          -0.1 & 0.1 & -0.1 \\
          -0.1 & 0.1 &  -1
    \end{smallbmatrix}, \quad E_3 = \begin{smallbmatrix}
          1 & 0 \\
          0 & 1 \\
          0 & 0
    \end{smallbmatrix},
\end{align*}
and, for all $\i$, $F_i = 0$ and $C_i = \diag(1,3,1)$, corresponding to a larger penalization for the second state.  We collect $100$ datasets from open-loop experiments, with $T_i=20$ for all $\i$, and the samples of the disturbance $\dist$  satisfying an energy bound as in  \eqref{eq:Phiepsilon}; the corresponding compatibility sets and matrices $\Phi_i$'s are computed as in \cref{rem:refineddisturbance}. For each dataset,  we aim at designing a controller to optimize the $\mc{H}_\infty$ performance of the system, by solving the data-driven program \eqref{eq:mainHinf}.
Note that each $\bar A_i$ is unstable and so is $\frac{1}{3}(\bar A_1+\bar A_2+\bar A_3)$; moreover, even in the case of known model, the condition \eqref{eq:specialPi_average} results in unfeasibility of the \gls{LM} inequalities \eqref{eq:LM1} --and similarly of \eqref{eq:mainHinf}. However, with an oracle of the system matrices, feasibility of  \eqref{eq:mainHinf} under \eqref{eq:specialPi_convex} is expected when the compatibility sets are  small enough, based on \cref{lem:existenceofspecialsol} and the fact that the matrix $0.2 \bar A_1+0.6\bar A_2+0.2\bar A_3$ is Hurwitz.

In fact, for small noise bounds, we are able to find a solution to \eqref{eq:mainHinf} for all the datasets, via line-search over $\{\gamma_i\}_{\i}$. For any fixed value of these parameters, minimizing $\rho$ subject to \eqref{eq:specialPi_convex} is an \gls{SDP}. The average optimal value obtained for disturbance bound $\epsilon = 0.001$, $0.01$, $0.1$ (i.e., $\textnormal{SNR}\approx
 63$dB, $54$dB, $38$dB, respectively) is $\rho = 45.2$, $55.4$, $150.4$, respectively. For $\epsilon =1$ ($\textnormal{SNR}\approx 33$dB), the problem is unfeasible; the optimal value assuming perfect knowledge of the model is $41.6$. 
 Finally, to evaluate the impact on performance when the noise bound is overapproximated, we generate datasets with disturbance model in \eqref{eq:Phiepsilon} and $\epsilon = 0.001$, but solve \eqref{eq:mainHinf} by using the conservative bound  $\Phi^i_{1,1} =  \hat \epsilon I$, for some $\hat \epsilon > \epsilon$. The average guaranteed $\mc{H}_\infty$ performance for $\hat \epsilon = 0.01$, $0.1$, $0.2$, $0.3$  is $\rho = 54.3$, $131.1$, $612.6$, $1462.3$, respectively;  for $\hat \epsilon = 1$ the problem is always infeasible. As expected, the performance deteriorates, both with larger disturbances and coarser bounds. \color{black}

\cref{fig:2} shows one closed-loop trajectory under the controller \eqref{eq:control}, with disturbance $\dist(t) = \frac{1}{t} \operatorname{col}(\sin(t),\sin(t-\frac{2}{3}\pi),\sin(t-\frac{4}{3}\pi))$. The bottom-right plot shows part of the switching signal: after a transitory where  mode $i=2$ is active, the trajectory hits a sliding surface and starts chattering between modes $2$ and $3$ (a case we accounted for in our analysis,  see \cref{prop:Hinf} in \cref{app:LM}).
% Sliding mode is often exploited  a fast switching is undesirable (e.g., because if 

%%%%%%%%%%%%%%%%%%%%%%%%%%%%%%%%%%%%%%%%%%%%%%%%%%%%%%%%%%%%%%%%%%%%%%%%%%%%%%%%%%%%%%%%%%%%%%%%%%%%%%%%%%%%%%%%%%%%%%%%%%%%%%%%%%%%%%%%%%%%%%%%%%%%%%

% Sliding motion is a good approximation
% Even though an ideal sliding motion would not occur in a real
%  system, it is very helpful to the understanding of the behavior of real switched systems. As remarked in [32], a sliding mode is an approximation of the solution of a real system, by letting the nonidealities, such as time-delay, hysteresis, and other types (e.g., sampling time), go to zero. Thus it is expected that when a switching law is implemented by a computer and a nonideal actuator, the solution of the real system would be closely de- scribed by the sliding motion. Because of the practical impor- tance of sliding mode and because it is often unavoidable in a switched system,
% where we use a , successfully introduced for data-driven control 
% where  is solved to minimize ; 
% where at each iteration a semidefinite program (in $M$, $\{\bs L_i\}_\i$, $\tilde{\bs Q}_i$, $\{\alpha_i\}_{\i}$) these techniques were recently leveraged in data-driven control, in \cite{Dai_Semialgebraic_LCSS2020,Dai_Switched_CDC2018}.
% where we use a , successfully introduced for data-driven control 
% where  is solved to minimize ; 
% where at each iteration a semidefinite program (in $M$, $\{\bs L_i\}_\i$, $\tilde{\bs Q}_i$, $\{\alpha_i\}_{\i}$) these techniques were recently leveraged in data-driven control, in 

	\begin{figure}[t]
	        \vspace{-1.1em}
			\hspace{-1em}
\includegraphics[width=1\columnwidth]{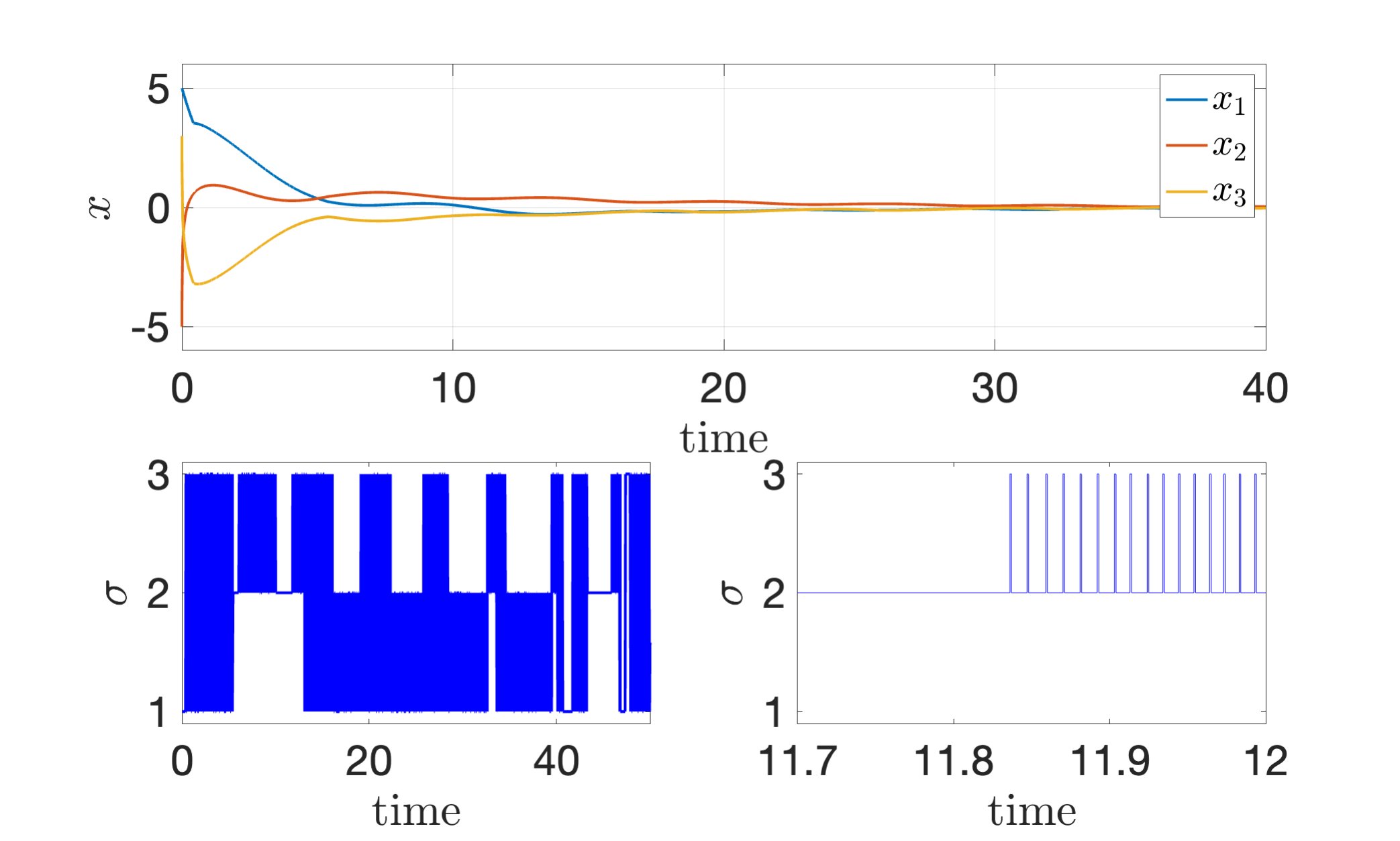}
\caption{$\mc{H}_\infty$ data-driven stabilization of a switched linear system with disturbances: state evolution (top), and switching signal (bottom-left, and a detail, bottom-right). Importantly, \cref{th:main:Hinf}  guarantees the  $\mc{H}_\infty$ performance of the real unknown system even when chattering occurs, as  in this example.\label{fig:2}
}
\end{figure}

% \begin{table}[t] 
% \color{blue}
% {\small
% 	%	\begin{adjustwidth}{-1cm}{-1cm}
% 	\setlength{\tabcolsep}{4pt}
% 	%		\renewcommand{\arraystretch}{1.5}
% 	\centering
% 	\begin{tabular}{ r@{\hskip 3em}  c c c c c}
%  $\epsilon$, $\hat \epsilon$ & 0 & 0.001 & 0.01 & 0.1 & 1
%  \\[-0.4em]
%     \midrule
% 	Increasing noise ($\epsilon$) $\rightarrow$ &  $41.6$ & $45.2$ & $55.4$ & $150.4$ & $\infty$
%  \\[-0.4em]
%   Coarser bound ($\hat \epsilon$) $\rightarrow$ & --  & $44.9$ & $54.3$ & $131.1$ & $\infty$
% 		\end{tabular}
% 	\vspace{0.3em}	\caption{\label{tab:2} Percentage of solutions found and total computation time of \eqref{eq:main2}, for the two relaxations \eqref{eq:specialPi_convex} and \eqref{eq:specialPi_average}, for  several noise levels $\epsilon$, and employing either line-search (LS) or \glsfirst{RM} in the solver.}
% 	%	\end{adjustwidth}
% 	}
%  \color{black}
% \end{table}

%%%%%%%%%%%%%%%%%%%%%%%%%%%%%%%%%%%%%%%%%%%%%%%%%%%%%%%%%%%%%%%%%%%%%%%%%%%%%%%%%%%%%%%%%%%%%%%%%%%%%%%%%%%%%%%%%%%%%%%%%%%%%%%%%%%%%%%%%%%%%%%%

\subsection{Data-driven robust constrained stabilization}
\vspace{-0.7em}

Consider an \gls{LTI} system as in \eqref{eq:disturbedLTI}, with $n= 3$, $m = 1$,
\begin{align*}
    \bar A = \begin{smallbmatrix}
          -1 & 0 & 0 \\ 1 & -2 &-1\\ 0 & 1 & 0
    \end{smallbmatrix}, \quad \bar B = \begin{smallbmatrix}
          0 \\ 0 \\1
    \end{smallbmatrix}, \quad E =  I_3,
\end{align*}
where the disturbance $\dist$ satisfies the instantaneous bound $\dist(t)^\top \dist(t) \leq \eta $,
subject to the state constraints $x \in \mc{X} \coloneqq \{ x\mid \|x \| _\infty \leq 1\}$ (i.e., \cite[Ex.~4]{Thibodeau:Tong:Hu:SetInvariance:AUT:2009}). 

The system matrices $\bar A$, $\bar B$ are unknown, but an open-loop experiment is recorded, of length $T =20$; the data collected satisfy \cref{asm:disturbancemodel} with $\Phi =\Phi_1$ as in \eqref{eq:Phiepsilon}, $\epsilon = T \eta  $  (see  \cref{rem:alternative_disturbance}). 
We leverage \cref{th:invariance} to solve the robust constrained stabilization problem, 
by additionally imposing the input constraints
$\|u \|_\infty \leq 1$, via the conditions in  \eqref{eq:inputconstraints}.

	\begin{figure}[t]
		\centering
\includegraphics[width=0.95\columnwidth]{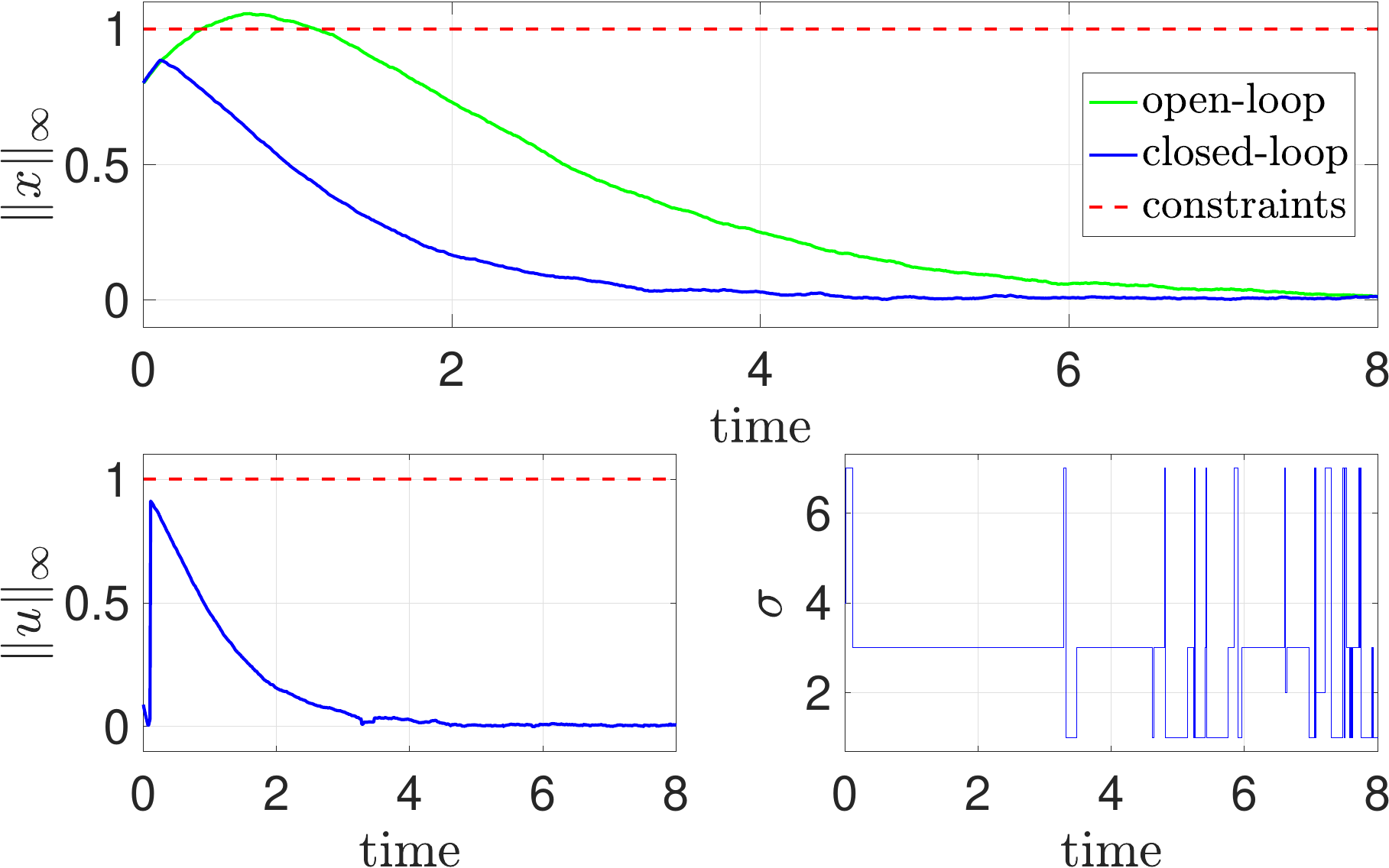}	\caption{Data-driven constrained robust stabilization of a disturbed \gls{LTI} system, via a switched compensator (with active mode $\sigma$). \label{fig:invariance}
}
\end{figure}

We fix $\nu = 0.1$ in \eqref{eq:Pj}. 
To maximize the invariant set $\mc{X}_0$, we look for the smallest $\theta>0 $ such that $\theta^{-\frac{1}{2}}\mc{X}  \subseteq \mc{X}_0$; the latter condition is enforced via the  \glspl{LMI} $  v_k ^\top P_j v_k \leq \theta $, for each $j\in \mc{J}_0$ and each vertex $v_k$ of $\mc{X}$ \cite{Thibodeau:Tong:Hu:SetInvariance:AUT:2009}. 
We also note that the
data-driven inequalities \eqref{eq:maininvariance2} can be recast as a rank-constrained \gls{LMI} (similarly to \eqref{eq:rankconstrained}), by imposing $\operatorname{rank}(M) = 1$, where $M = h h^\top$ and $h = \begin{bmatrix}
      \tilde \gamma_0 &   \beta_0 &  \operatorname{vec}(\tilde P_0)^\top  & \operatorname{vec}(P_0)^\top
\end{bmatrix}$. 
We then solve the resulting data-driven problem via the algorithm in \cite{Boyd:RankMinimization:ACC:2003} and bisection over $\theta$, to design the switched controller \eqref{eq:inputswitchmax} and the invariant set $\mc{X}_0$. 

 \cref{fig:invariance} show a closed-loop simulation for the case $\eta = 0.1$ ($\textnormal{SNR} \approx 32$dB), with randomly generated disturbances.  The top plot shows that the closed-loop system achieves safety, while the uncontrolled system violates the constraints (here, $x(0)= \col(0.8, 0.8,0.8) \in \mc{X}_0$). The volume of the obtained set $\mc{X}_0$ is $75 \% $  of the volume of  $\mc{X}$. For comparison, the percentage obtained with $\nu =  1$ and $\nu=0.01$ are $24 \%$ and $77 \%$, respectively; for $\nu=0.001$, the problem is unfeasible (see \cref{rem:inputconstraints}).

Next, in \cref{fig:volumes}, for different values of the disturbance bound $\eta$, we compare the volume of the guaranteed invariant set $\mc{X}_0$ obtained with our switched controller, in \eqref{eq:inputswitchmax}, with that obtained via:
\begin{enumerate}[leftmargin=*,topsep=-1em]
\item  a data-driven linear controller $u = Kx$, designed as in \cref{sec:Constrained}, by replacing the definition of the Lyapunov function $v_{\max}$ in \eqref{eq:Vmax} with the quadratic $v_{\max} = x^\top P_0 x$, where $\mc{X}_0 \subseteq \mc{X}$ as in \eqref{eq:X0} is imposed via the additional constraints $\tilde P_0 \preccurlyeq \tilde P_j$, for all $j \in \mc{J}$, and the volume of $\mc{X}_0$ is maximized as in \cref{rem:maxX0};
\item the data-driven polynomial controller in \cite{Luppi:Bisoffi:DePersis:Tesi:SafePolynomial:arXiv2021}.

\end{enumerate}
 Note that a larger disturbance bound affects not only the quality of data, but also the problem itself (i.e., ensuring robust invariance is more challenging; mathematically, the matrix $E$ in \eqref{eq:maininvariance2} is scaled). 
 
 We first compare the results obtained by neglecting the input constraints (top axes). For the switched and the linear controller, we choose the regularization parameter $\nu = 10^{-3}$. For the small disturbance bound $\eta = 0.01$, the switched controller achieves a guaranteed invariant set  $\mc{X}_0$  with volume $97 \%$ of the volume of $\mc{X}$, namely almost all constraint set is guaranteed to be invariant under the switched controller in \eqref{eq:inputswitchmax}. We also remark that the 
  polynomial controller only ensures nominal invariance, i.e., invariance in the  absence of  closed-loop disturbance, hence the size of the obtained $\mc{X}_0$ decreases less for increasing $\eta$ (the condition in \cite[Rem.1]{Luppi:Bisoffi:DePersis:Tesi:SafePolynomial:arXiv2021}, which could be used to enforce robust invariance, resulted in infeasibility in our simulations).

In the bottom plot of \cref{fig:invariance} we instead also consider input constraints. For the largest noise bound $\eta = 2$, the program \eqref{eq:maininvariance2}-\eqref{eq:inputconstraints} fails to return a solution. 
 For the polynomial controller, the  input constraints  imposed as in \cite[Rem.~4]{Luppi:Bisoffi:DePersis:Tesi:SafePolynomial:arXiv2021} always result in infeasibility in our experiments.

% \cref{tab:2} compares the value obtained for different choices of the disturbance bound $\eta$.
% %
% \marginJC{$\nu$ is a design parameter, but the table is instead showing for different values of disturbance $\epsilon$, no?}
%

%
% \marginJC{You mean $\epsilon = 2$}
%

%%%%%%%%%%%%%%%%%%%%%%%%%%%%%%%%%%%%%%%%%%%%%%%%%%%%%%%%%%%%%%%%%%%%%%%%%%%%%%%%%%%%%%%%%%%%%%%%%%%%

\begin{figure}[t]
	\centering
\includegraphics[width=\columnwidth]{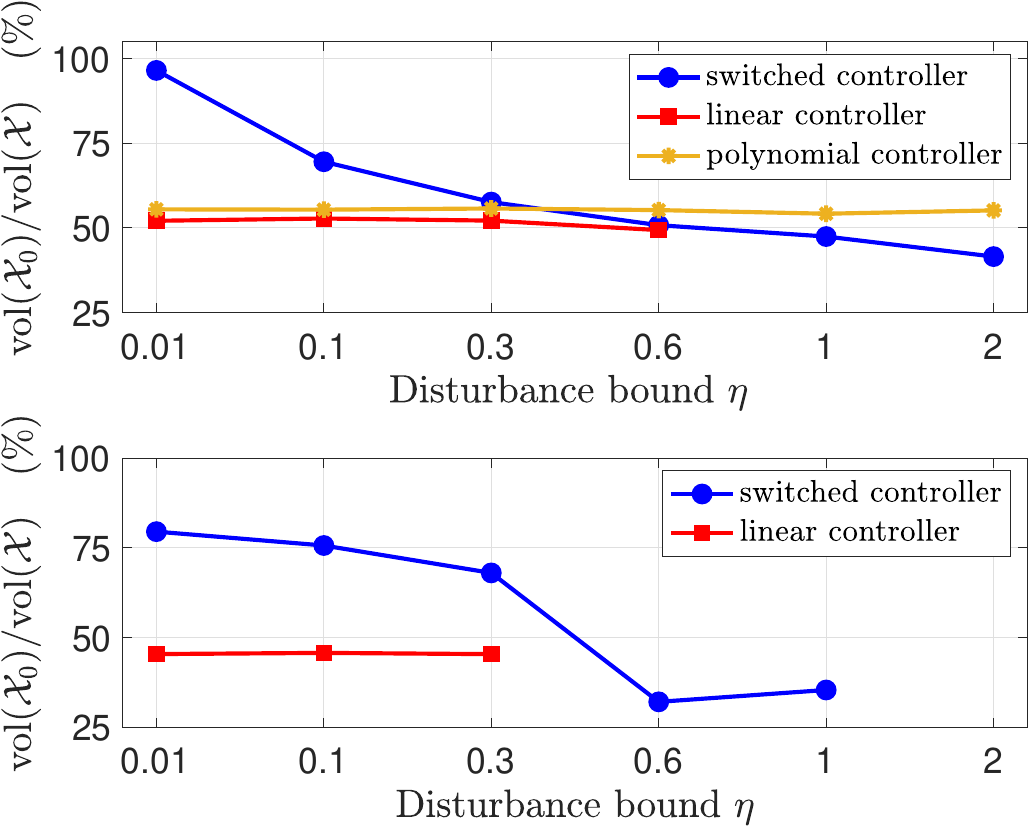}
\caption{Volume of the safe invariant set $\mc{X}_0$ obtained with data-driven compensators: we compare the proposed switched controller versus a linear controller and the polynomial controller in \cite{Luppi:Bisoffi:DePersis:Tesi:SafePolynomial:arXiv2021}. The input is unconstrained in the top plot and constrained as  $\|u\|_{\infty}\leq 1$ in the bottom plot. Missing data-point represent infeasibility of the corresponiding data-based programs. The linear and switched controllers ensure stability and  robust invariance; the polynomial controller  guarantees nominal invariance.  \label{fig:volumes}} 
\end{figure}

\subsection{Data-driven viral escape mitigation}
\vspace{-0.7em}

Here, we focus on treatment scheduling for viral escape mitigation. We consider the switched linear virus mutation dynamics of \cite{Blanchini_CTViralEscape_2010}. The virus has $n$ different genotypes; $x_\ell \coloneqq [x]_\ell$ is the viral population of the $\ell$-th genotype. At time $t$, the drug $\sigma(t) \in \{1\,2,\dots,N\}$ is administered, chosen among $N$ different therapies, more or less effective against each genotype: drug $i$ results in the proliferation rate $[R]_{\ell,i}$ for genotype $\ell$. The resulting dynamics are
\begin{equation}\label{eq:virussystem}
    \dot x_{\ell} = \left( [R]_{\ell,\sigma}  - \delta \right) x_\ell + \mu \textstyle \sum_{j\in\mc{I}, j \neq \ell} [M]_{\ell,j} x_j, 
\end{equation}
where $\delta>0$ is the clearing rate, $\mu>0$ the mutation rate, and $[M]_{\ell,j} =1$ if genotype $j$ can mutate into genotype $\ell$, $0$ otherwise. Here, $\mu = 10^{-4}$, $\delta =0.24 $, $n=5$, $N=4$,
\begin{align}
    R = \begin{smallbmatrix}
        0.05 & 0.05 & 0.37 & 0.25 
        \\
        0.39 & 0.05 & 0.21 & 0.14 
        \\
        0.05 & 0.39 & 0.05 & 0.23
        \\
        0.30 & 0.14 & 0.21 &0.22
        \\
        0.27 & 0.09 & 0.21 & 0.04
    \end{smallbmatrix}, \quad  
    M = \begin{smallbmatrix}
        0 & 1 & 1 & 1 & 0
        \\
        1 &0 &0 &0 &1
        \\
        1 &0 &0 &0 &1
        \\
        1 &0 &0 &0 &1
        \\ 
        0 &1 &1 &1 &0
    \end{smallbmatrix}.
\end{align}
\indent The system in \eqref{eq:virussystem} is an instance of \eqref{eq:system} with $m=0$. We assume that the system is unknown, but data on the effectiveness of each therapy have been recorded
in the form of \eqref{eq:data}, with $T_i=T$ for all $i\in\mc{I}$. The data are affected by a disturbance with bounded sample covariance, namely, for all $i$, $\frac{1}{T} \sum_{\tau= 1}^{T}  (\noise(t_\tau^ i)- \noise_{\text{avg}}^i)(\noise(t_\tau^i)-\noise_{\text{avg}}^i)^\top \leq 0.02 I$, with $\noise_{\text{avg}}^i = \frac{1}{T} \sum_{\tau= 1}^{T} \noise(t_\tau^ i) $. Equivalently, \cref{asm:disturbancemodel} is satisfied with $\Phi^i_{1,2} = \0$, $\Phi_{1,1}^i= 0.02 I$, $\Phi_{2,2} = I - \frac{1}{T} \1 \1^\top$. 
The objective is to choose the discrete input $\sigma$ to overcome the infection as quickly as possible. Thus, we look for a switched controller as in \eqref{eq:control}, by imposing the data-driven condition in \eqref{eq:main2}, and by maximizing the worst-case closed-loop convergence rate as per \cref{rem:rate}.\footnote{
 A drug-resistant genotype could also be studied \cite{Blanchini_CTViralEscape_2010}. In this case, the system would be nonstabilizable, meaning that one can only hope to slow down the infection as much as possible. The latter problem can be cast in terms of data by modifying \eqref{eq:main2} via an exponential discounted change of coordinates \cite{Blanchini_CTViralEscape_2010}.}
\newline\indent
\cref{fig:rates} shows the rates obtained for different values of $T$ 
($\textnormal{SNR} \approx 32$dB in all cases; for $T$ smaller than $50$, no solution is found), and highlights the benefit of having larger datasets, see \cref{rem:batchsizes}. We remark that the implementation of the controller \eqref{eq:control} could result in chattering, which might be undesirable in some cases. One way to avoid this issue is to choose $\sigma$  according to a Markov chain with transition matrix $\Pi^\top$ ($\Pi$ as in \eqref{eq:main2}). This open-loop controller still-ensures (stochastic) stability (see \cref{ex:Markov}) and further does not require state measurement, possibly at the cost of some performance, as illustrated in \cref{fig:virus_trajectory}. 

 \begin{figure}[t]
	        % \vspace{-1.1em}
			\hspace{-1em}
\includegraphics[width=1\columnwidth]{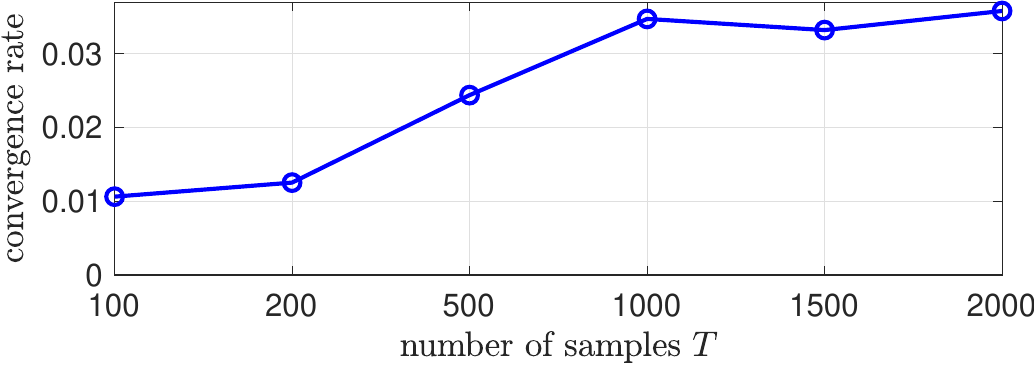}
\caption{ Guaranteed convergence rate for different dataset sizes. \label{fig:rates}
}
\end{figure}
 \begin{figure}[t]
	        % \vspace{-1.1em}
			\hspace{-1em}
\includegraphics[width=1\columnwidth]{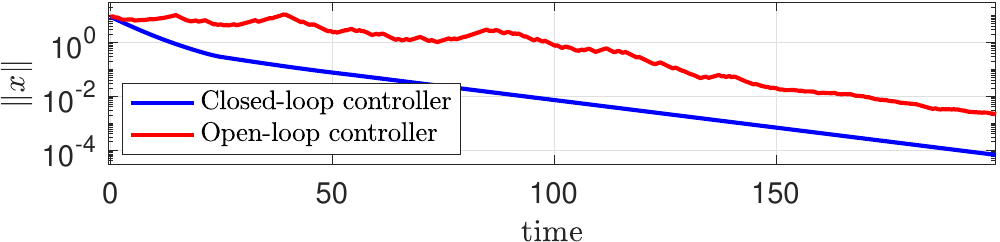}
\caption{ Viral load, with drug therapy scheduling chosen based on the current state or in open-loop according to a Markov chain.  \label{fig:virus_trajectory}
}
\end{figure}

\color{black}
\vspace{-0.5em}
\section{Conclusion}
\vspace{-0.5em}

We have considered the stabilization of an unknown switched linear systems by solving data-driven Lyapunov-Metzler inequalities, parameterized by the set of available noisy experimental data.
Since the problem is computationally expensive even for  a known system, it is crucial to massage the matrix conditions to allow for efficient solution. We have presented various relaxations that significantly reduce the computational cost.
Our techniques find application beyond control of switched systems, for instance in data-driven constrained stabilization. More generally, they can be used to recast in a data-driven fashion (without introducing conservatism nor additional computational complexity)
a large class of coupled-Lyapunov inequalities,  pervasive in problems related to switching systems and  stabilization of differential inclusions   \cite{Hu:Ma:Lin:Composite:TAC:2008}, \cite{Hu:Blanchini:Nonconservative:AUT:2010}.
% avoiding the occurrence of additional nonlinearities.
% % In practice suitable relaxations can also be employed for reduced complexi

As future work, it would be valuable to address the data-based design of \emph{mode-independent} continuous compensators, for switched or Markov jump systems. \emph{Nonconservative} design of stabilizing controllers  based on input-\emph{output} data is also a prominent open challenge. The extension of our results to \emph{discrete-time} switched linear systems is left for future research.
%Discrete-time systems?

% \begin{table}[t] 
% 	%	\begin{adjustwidth}{-1cm}{-1cm}
% 	\setlength{\tabcolsep}{5pt}
% 	%		\renewcommand{\arraystretch}{1.5}
% 	\centering
% % 	\newlength{\mywidth}
% % 	\setlength{\mywidth}{0.8em}
% 	\begin{tabular}{ c@{\hskip 1.5em} c c c  c c c }
% 	   % \hline
% 	  	\toprule
% 		$\eta$ & 0.01 & 0.1 & 0.3 & 0.6 & 1 & 2
% 	    \\
% 	    \hspace{-0.5em}$\operatorname{vol}(\mc{X}_0)/\operatorname{vol}(\mc{X})$  ($\%$)
% 	    & 79.5 & 75.7 & 68.0 & 40.3  & 35.3 & -- 
% 	    \\
% 		 \bottomrule
% 	\end{tabular}
% 	\vspace{0.3em}	\caption{\label{tab:2} Ratio between the volumes of the constraint set $\mc{X}$ and the invariant set $\mc{X}_0$, for different bounds $\eta$ on the instantaneous squared norm of the disturbance.}
% 	%	\end{adjustwidth}
% \end{table}
%%%%%%%%%%%%%%%%%%%%%%%%%%%%%%%%%%%%%%%%%%%%%%%%%%%%%%%%%%%%%%%%%%%%%%%%%%%%%%%%%%%%%%%%%%%%%%%%%%%%

\appendix

\section{\gls{LM} inequalities and sliding motion}\label{app:LM}

We consider a continuous-time switched linear system 
\begin{align}\label{eq:system_nou}
    \dot{x}(t) = A_{\sigma(t)} x(t),
\end{align}
where $x\in \R^{n}$, $\sigma(t)\in \mc{I}=\{1,2,\dots,N\}$ is a controlled switching signal, $\{A_i\}_\i$ are the system matrices. We are interested in the \emph{min-switching} feedback law
% $\sigma(x) =  \argmin_{\i} \ x^\top P_i x$ (for some matrices $ \{P_i \in \R^{n\times n} \}_{\i} $), with any selection rule when the $\argmin$ is not unique; for simplicity, let us fix
\begin{align}\label{eq:switching_control}
    \sigma(x) =  \min \left\{ \textstyle \argmin_{\i} \ x^\top P_i x \right\},
\end{align}
(the $\min$ selects the minimum index when the $\argmin$ is set-valued; any other selection rule can also be chosen), where the matrices $\{P_i \in \R^{n\times n} \}_{\i
}$ solve the following \gls{LM} inequalities problem \cite{GeromelColaneri_SIAM2006}:
% One way to design the controller \eqref{eq:switching_control} is by solving the following \gls{LM} inequalities : 
find $\{P_i \succ 0 \}_{\i}$, $\{Q_i \succcurlyeq 0\}_{\i}$, $\Pi =[\pi_{i,j}]_{i,j} \in \M$ such that 
\begin{align}\label{eq:LM_original}
 \hspace{-1em} \forall  \i,  \quad  A_i^\top P_i + P_i A_i + \textstyle \sum_{j\in \mc{I}} \pi_{j,i} P_j +Q_i \prec 0.
\end{align}

\subsection{Solution concept and stability}
It was shown in \cite{GeromelColaneri_SIAM2006} that 
matrices $\{P_i\}_{\i}$ satisfying \eqref{eq:LM_original} ensure asymptotic stability for all  Carathéodory solutions of the closed loop system \eqref{eq:system_nou}-\eqref{eq:switching_control}. However,  the switching rule \eqref{eq:switching_control} can results in \emph{chattering}. For this reason, we instead consider the \emph{Filippov}\footnote{\eqref{eq:sliding} actually defines a superset of the Filippov solutions, as we do not exclude sets of measure zero (which would require  $\mathrm{I}(x)  \coloneqq \{\i \mid \forall  V \in \mc{N}(x), \exists U \subseteq V \cap \mc{X}_i \text{ s.t. } \mu(U)>0\}$); in this way we also include all Carathéodory solutions.} solutions
of \eqref{eq:system_nou}, namely  absolutely continuous trajectories $x:\R_{\geq 0} \rightarrow \R^n$ such that, for almost all $t$, 
\begin{align} \label{eq:sliding}
    \dot x(t) = \textstyle \sum_{i\in \textrm{I(x)}} \alpha_i(x) A_i x,
\end{align}
for some $\{\alpha_i(x)\} _{i \in I(x)} \in \Delta$, where 
\begin{align}
    \mathrm{I}(x) & \coloneqq \{\i \mid \forall  V \in \mc{N}(x), \exists y \in V \text{ s.t. } y\in \mc{X}_i \}  
    % \\
    % \mathrm{I}(x) & \coloneqq \{\i \mid \forall  V \in \mc{N}(x), \exists U \subseteq V \cap \mc{X}_i \text{ s.t. } \mu(U)>0\}  
    \\
    \mc{X}_i &\coloneqq \{ x \in \R^{n} \mid \sigma(x) = i \},
    \end{align}
    where $\mc{N}(x)$ is the set of neighborhoods of $x$ (i.e., the set of all open subsets of $\R^n$ containing $x$). When $x \in \operatorname{int}(\mc{X}_i)$,  $\mathrm{I}(x) = \{i\}$ is a singleton and  $\dot x  = A_i x_i$. A solution can also cross the boundary between two regions $\mc{X}_i$ and $\mc{X}_j$. Finally, $x(t)$ can evolve along the boundaries between two or more regions, in a direction specified not by one of the modes $\i$, but by a convex combination of the matrices $A_i$ with $i \in \mathrm{I(x)}$: in this case we talk about ``sliding mode''. Although ideal sliding mode would not happen in practice (due to discretized controllers, hysteresis, time-delay), it provides a close approximation of the behavior of the real system under fast switching. 
    
%%%%%%%%%%%%%%%%%%%%%%%%%%%%%%%%%%%%%%%%%%%%%%%%%%%%%%%%%%%%%%%%%%%%%%%%%%%%%%%%%%%%%%%%%%%%%%%%%%%%%%%%%%%%%%%%%%%%%%%%%%%%%%%%%%%%%%%%%%%%%%%%%%%%%%%%%%%%%%%%%%%%%%%%%%%%%%%%%%%%%%%%%%%%%%%%%%%%%%%%%%%%%%%
It is known that the \gls{LM} inequalities do \emph{not} ensure stability for all closed-loop Filippov solutions: in fact, \emph{repulsive} sliding motion \cite{Filippov:1988} can cause instability \cite[Ex.~1.1]{Heemels:Weiland:ISSdiscontinuous:AUT:2008}, \cite[Rem.~6]{Heemels:Kundu:Daafouz:LMinequalities:TAC2017}. On the other hand, repulsive sliding mode would not appear in practice, e.g., if the controller is discretized, (and further implies the existence of alternative solutions to \eqref{eq:system_nou}). For the case of $N=2$, asymptotic stability of all solutions with \emph{attractive} sliding mode  was shown in \cite[p.~70]{Liberzon2003}. The result was generalized in \cite[Rem.~2]{GeromelColaneri_SIAM2006} and \cite{Geromel:Colaneri:Hsu:Solucoes:Brasileiro:2018} to all Filippov solutions  such that\footnote{ \cite{GeromelColaneri_SIAM2006,Geromel:Colaneri:Hsu:Solucoes:Brasileiro:2018} actually assume the stronger condition $\dot   v_{\sigma(x)} ( x, \alpha_i(x) A_i x ) \leq \dot v_{i} (x, \alpha_i(x) A_ix)$ $\forall i \in \textrm{I}(x)$.},
% but the proof holds under the weaker \eqref{eq:asmsliding}.},
for almost all $t$
\begin{align}\label{eq:asmsliding}
    \dot   v_{\sigma(x)} \left( x,\textstyle \sum_{i\in \textrm{I}(x)} \alpha_i(x) A_i x \right) \leq \textstyle \sum_{i\in \textrm{I}(x)} \dot v_{i} (x, \alpha_i(x) A_ix)
\end{align}
with (possibly zero) $\alpha_i$'s as in \eqref{eq:sliding}, and \begin{align}
    \dot v_i (x, \xi) \coloneqq x^\top P_i \xi +\xi^\top P_i x \end{align} 
is the directional derivative of $v_i(x) \coloneqq x^\top P_i x$ at $x$ along $\xi$; nevertheless, these results do not take into account performance specifications. We remedy this in the following, by  restricting our attention to the same class of trajectories: throughout the paper, by solution of \eqref{eq:system_nou} we mean a trajectory satisfying \eqref{eq:sliding}--\eqref{eq:asmsliding}\footnote{\label{foot:slidingcondition}With analogous definition when considering systems with input or disturbances: if \eqref{eq:system_nou} is replaced by $\dot x(t) = \xi_{\sigma(x)} (x,t)$ for some (time dependent) mappings $\{\xi_i\}_{\i}$, then the term $A_i x$ shall be replaced by $\xi_i (x,t)$ in \eqref{eq:sliding} and \eqref{eq:asmsliding}. The considerations in \cref{rem:sol_concept} are still valid.}.

\begin{remark}[On the solution concept] \label{rem:sol_concept} Condition \eqref{eq:asmsliding}  virtually always holds in practice: for example, it is verified for any attractive sliding motion involving only two modes \cite[Eq.~3.22]{Liberzon2003} -- the most relevant case and often the only considered \cite{Liberzon2003}, \cite{Filippov:1988}. Indeed, the proof of \cite[Prop.~1]{Hu:Ma:Lin:Composite:TAC:2008} argues that 
\eqref{eq:asmsliding} is \emph{necessary} for the occurrence of chattering if \eqref{eq:switching_control} is discretized with arbitrarily small sampling time. Nonetheless, examples can be constructed where the continuous-time system \eqref{eq:system_nou} does not admit any solution satisfying \eqref{eq:asmsliding} (at the boundary between three or more regions $\mc{X}_i$). These pathological cases are excluded from our analysis: like the related literature \cite{GeromelColaneri_SIAM2006,Hu:Ma:Lin:Composite:TAC:2008}, we assume throughout existence of a solution satisfying \eqref{eq:sliding}-\eqref{eq:asmsliding}, wherever needed.  \hfill $\square$
\end{remark}
Following \cite{GeromelColaneri_SIAM2006}, we study the stability of \eqref{eq:system_nou} via the non-convex, non-differentiable Lyapunov function
\begin{align}\label{eq:vmin}
    v_{\min}(x) \coloneqq \min_{\i} \  x^\top P_i x = \min_{\i} v_i(x),
\end{align}
with matrices $\{P_i\}_{\i}$ solving \eqref{eq:LM_original}. 

\setpropositiontag{A1}
\begin{proposition}[Switched stabilization] \label{prop:LMstability}
    If there exist  $\{P_i\succ 0, Q_i\succcurlyeq 0\}_{\i}$ and  $\Pi=[\pi_{i,j}]_{i,j \in \mc{I}}  \in \M$ satisfying the Lyapunov-Metzler inequalities \eqref{eq:LM_original}, 
    then the switched feedback control law \eqref{eq:switching_control} makes $x^*=0$ globally asymptotically stable for the system \eqref{eq:system_nou}. Moreover, it holds that
    \begin{align}\label{eq:performance}
       \textstyle   \int_{0}^{\infty} \ x^\top Q_{\alpha(x)} x \ \text{d}t< \min_{\i} \ x(0)^\top P_i x(0), 
    \end{align}
    with $Q_{\alpha(x)} \coloneqq \sum_{i\in \textrm{I(x)}} \alpha_i(x) Q_i$, $\{\alpha_i\}_\i$ satisfying \eqref{eq:sliding}\footnote{Without loss of generality, we can take $t\mapsto \alpha_i(x(t))$ almost everywhere continuous (because $\dot x$ in \eqref{eq:system_nou} is), 
    which ensures integrability in \eqref{eq:performance}.}. \hfill $\square$
\end{proposition}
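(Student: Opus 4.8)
The plan is to use the nonsmooth function $v_{\min}$ from \eqref{eq:vmin} as a Lyapunov function. Since each $P_i\succ 0$, $v_{\min}$ is continuous, positive definite and radially unbounded, and being a pointwise minimum of quadratics it is locally Lipschitz; hence along any absolutely continuous solution $x(\cdot)$ of \eqref{eq:system_nou}, the scalar map $t\mapsto v_{\min}(x(t))$ is absolutely continuous and differentiable for almost all $t$. The heart of the proof is to establish that $\frac{d}{dt}v_{\min}(x(t))\leq -x^\top Q_{\alpha(x)}x$ for a.e.\ $t$, with strict inequality away from the origin.

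The key algebraic step exploits the Metzler structure of $\Pi$. Let $i$ be \emph{any} mode active at $x$, i.e.\ $x^\top P_i x=v_{\min}(x)$, so that $x^\top(P_j-P_i)x\geq 0$ for all $j$. Because $\Pi\in\M$ has nonnegative off-diagonal entries and zero column sums, $\pi_{i,i}=-\sum_{j\neq i}\pi_{j,i}$, whence
$$x^\top\Big(\textstyle\sum_{j\in\mc{I}}\pi_{j,i}P_j\Big)x=\sum_{j\neq i}\pi_{j,i}\,x^\top(P_j-P_i)x\geq 0.$$
Substituting this into the strict inequality \eqref{eq:LM_original} evaluated at $x\neq 0$ yields $x^\top(A_i^\top P_i+P_i A_i)x<-x^\top Q_i x$ for every active $i$. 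I then split into two cases. When $x(t)$ has a unique active mode $i$ (the interior of $\mc{X}_i$), one has $\dot x=A_i x$ and $v_{\min}$ coincides with $v_i$ nearby, so $\frac{d}{dt}v_{\min}=x^\top(A_i^\top P_i+P_i A_i)x<-x^\top Q_i x=-x^\top Q_{\alpha(x)}x$. When instead $x(t)$ slides on a surface with $\dot x=\sum_{i\in\mathrm{I}(x)}\alpha_i A_i x$, I invoke \eqref{eq:asmsliding}: since $\sigma(x)$ attains the minimum, $\frac{d}{dt}v_{\min}=\dot v_{\sigma(x)}(x,\dot x)\leq\sum_{i\in\mathrm{I}(x)}\dot v_i(x,\alpha_i A_i x)=\sum_{i\in\mathrm{I}(x)}\alpha_i\,x^\top(A_i^\top P_i+P_i A_i)x$. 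Every $i\in\mathrm{I}(x)$ is active (by continuity of the $v_i$, each attains the minimum at $x$), so the termwise bound applies and gives $\frac{d}{dt}v_{\min}<-\sum_{i\in\mathrm{I}(x)}\alpha_i\,x^\top Q_i x=-x^\top Q_{\alpha(x)}x\leq 0$.

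To conclude, I use that the finite family of matrices in \eqref{eq:LM_original} is negative definite, so there is $\epsilon_0>0$ with $\frac{d}{dt}v_{\min}(x(t))\leq -x^\top Q_{\alpha(x)}x-\epsilon_0\|x\|^2$ for a.e.\ $t$. Monotonicity of $v_{\min}(x(t))$, together with positive definiteness and radial unboundedness, yields Lyapunov stability and boundedness (trajectories remain in a compact sublevel set). Integrating over $[0,T]$ and letting $T\to\infty$ gives $\int_0^\infty x^\top Q_{\alpha(x)}x\,\mathrm{d}t+\epsilon_0\int_0^\infty\|x\|^2\,\mathrm{d}t\leq v_{\min}(x(0))$; in particular $x\in\mc{L}_2$, and since $\dot x$ is bounded (hence $\|x\|^2$ is uniformly continuous), Barbalat's lemma gives $x(t)\to 0$, i.e.\ global asymptotic stability. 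Discarding the term $\epsilon_0\int_0^\infty\|x\|^2\,\mathrm{d}t$, which is strictly positive whenever $x(0)\neq 0$, delivers the strict performance bound \eqref{eq:performance}, recalling $v_{\min}(x(0))=\min_{i\in\mc{I}}x(0)^\top P_i x(0)$.

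I expect the main obstacle to be the rigorous treatment of the sliding regime: establishing that $t\mapsto v_{\min}(x(t))$ is differentiable a.e.\ with $\frac{d}{dt}v_{\min}=\dot v_{\sigma(x)}(x,\dot x)$ along a sliding surface, and recognizing that \eqref{eq:asmsliding} is precisely the hypothesis needed to dominate this nonsmooth derivative by the convex combination of the smooth ones. Once that point is secured, the Metzler sign trick, the interior case, and the integral/Barbalat argument are all routine.
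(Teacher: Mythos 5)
Your proof is correct and rests on the same machinery as the paper's: the nonsmooth Lyapunov function $v_{\min}$ from \eqref{eq:vmin}, the Metzler sign argument at active modes, condition \eqref{eq:asmsliding} to dominate the derivative along sliding motions, and integration of the resulting decrease. The differences are in the bookkeeping, and three points are worth recording. First, your intermediate claim $\tfrac{\mathrm{d}}{\mathrm{d}t}v_{\min}=\dot v_{\sigma(x)}(x,\dot x)$ on a sliding surface is an overstatement: what actually holds (and what the paper imports from \cite[Lem.~2]{Hu:Ma:Lin:Composite:TAC:2008}) is $\tfrac{\mathrm{d}}{\mathrm{d}t}v_{\min}(x(t))=\dot v_{\min}(x,\dot x)=\min_{i\in \mathrm{I}(x)}\dot v_i(x,\dot x)\leq \dot v_{\sigma(x)}(x,\dot x)$, since $\sigma(x)\in\mathrm{I}(x)$ but need not minimize the directional derivatives; your argument is unaffected because only the direction ``$\leq$'' is used downstream, and this single formula also subsumes your interior/sliding case split (in the interior case $\mathrm{I}(x)$ is a singleton and \eqref{eq:asmsliding} holds trivially). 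Second, your ending is more careful than the paper's one-line conclusion: extracting a uniform margin $\epsilon_0\|x\|^2$ from the finite family of strict inequalities \eqref{eq:LM_original} and invoking Barbalat's lemma delivers both attractivity and, importantly, the strictness of \eqref{eq:performance} explicitly, whereas the paper's ``integrate \eqref{eq:Lyapdecrease}'' glosses over the fact that an a.e.\ strict inequality must be argued to survive the limit $T\to\infty$. Third, your computation $x^\top\bigl(\sum_{j\in\mc{I}}\pi_{j,i}P_j\bigr)x\geq 0$ at active modes is the correct sign --- it is exactly what makes the last step of \eqref{eq:Lyapdecrease} go through --- and the ``$\leq 0$'' written in the paper's proof is a typo.
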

\begin{pf}
For any $\xi \in \R^n$, it holds that  $\dot{v}_{\min}(x,\xi)\coloneqq \lim_{h\rightarrow 0^+} \frac{\vmin(x+h\xi)- \vmin(x)}{h} = \min_{i\in \mathrm{I}(x)} \dot v_i (x,\xi) \leq \dot v_{\sigma(x)} (x,\xi)$ \cite[Lem.~2]{Hu:Ma:Lin:Composite:TAC:2008}; therefore, by \eqref{eq:asmsliding}, along any solution satisfying \eqref{eq:sliding}, it holds, for almost all $t$, that 
\begin{align}\dot{v}_{\min{}} (x,\dot x) &  \leq \textstyle \sum _{i\in \mathrm{I}(x)} \alpha_i(x)  x^\top ( A_i^\top P_i + P_i A_i) x  
\\ & <  -\textstyle \sum _{i\in \mathrm{I}(x)} \alpha_i(x) x^\top Q_i x,
\label{eq:Lyapdecrease}
\end{align}
where the last inequality follows by \eqref{eq:LM_original}, because $x^\top P_i  x \leq x ^\top P_j x$ for $i\in \mathrm{I}(x)$, $j\in \mc{I}$,
and thus
$x^\top (\sum_{j\in \mc{I}} \pi_{j,i} P_j )x \leq 0$ for all $i\in \mathrm{I(x)}$. Stability follows by \eqref{eq:Lyapdecrease} because $\vmin$ is radially unbounded; the inequality \eqref{eq:performance} holds by integrating \eqref{eq:Lyapdecrease} over time, since $\vmin(x(t))\rightarrow 0$ as $t\rightarrow \infty$. \hfill $\blacksquare$
\end{pf}

 While the stability in \cref{prop:LMstability} was  established in \cite{Geromel:Colaneri:Hsu:Solucoes:Brasileiro:2018,Geromel:Colaneri:Bolzern:TAC:2008}, with respect to \cite[Th.~1]{Geromel:Colaneri:Bolzern:TAC:2008} we refined the  guarantee \eqref{eq:performance} to  cope with the possible occurrence of sliding motions. 
We next leverage the result to review $\H_2$ and $\H_\infty$ problems for  switched linear systems. 

\begin{remark}[Solvability of \gls{LM} inequalities]
A sufficient condition for  \eqref{eq:LM_original} to admit a solution is that a convex combination of the matrices $\{A_i\}_{\i}$ is Hurwitz \cite{GeromelColaneri_SIAM2006}, meeting the classical stabilizability condition given, e.g., in \cite{Liberzon2003}. Yet, finding a solution 
is not easy, as the problem is nonconvex --due to the bilinear terms in $\{P_i\}_\i$ and $\Pi$. In fact, in practice relaxations are usually employed to reduce the computational load, e.g., \cite[Th.~4]{GeromelColaneri_SIAM2006}. \hfill $\square$
\end{remark}
%
% \marginJC{I think it's a bit ugly to have a section (the appendix in this case) with only 1 subsection. Either we have at least 2 subsections, or none at all, no?}
%
\subsection{$\H_2$ and $\H_\infty$ control}\label{app:LM:performance}

Let us consider a switched linear system 
    \begin{align}
    \label{eq:system_nou2}
    \dot{x}   ={A}_\sigma x + E_\sigma \dist , \quad     z  = C_\sigma x + F_\sigma \dist
\end{align}
where $\dist(t)\in\R^q$ is an exogenous disturbance, $z \in \R^p$ is a performance output, and the matrices $\{E_i\}_\i$ and $\{F_i\}_\i$ measure the influence of the disturbance signal on the state evolution and output, respectively. 
To cope with sliding motions, let us define the \emph{modified} output \begin{align} 
    z_\alpha = \textstyle \sum_{i\in \mathrm{I(x)} } \alpha_i(x) z_i,
\end{align}
with weights $\alpha_i$ as in \eqref{eq:sliding};  note that $z^\alpha$ coincides with $z$ in the absence of sliding mode. We argue that our definition of $z_\alpha$ is very natural to deal with Filippov solutions: as ideal sliding motion approximates fast switching, the performance evaluation should represent all  modes involved (contrarily, $\sigma(x)$ in \eqref{eq:switching_control} would be constant along any sliding trajectory, which is not representative of the behavior of a real system when chattering occurs).

We  study the performance of \eqref{eq:system2} with respect to the channel $(\dist,z_\alpha)$. In particular, assuming  that $\sigma(x)$ is a stabilizing state-feedback controller and that $x(0)=0$, we consider the following performance indices:
\begin{itemize}[leftmargin = *]
    \item \emph{$\mc{H}_2$ index}: Let $F_i=0$, $E_i = E$  for all $\i$;  denote by $x_k:\R_{\geq 0} \rightarrow \R^{n}$ and $z^k_\alpha:\R_{\geq 0} \rightarrow \R^{p}$  state and modified output trajectories generated with  the the disturbance $\dist_k(t)\coloneqq e_k \delta(t)$ (i.e., with zero disturbance and $x(0)= E e_k$).
    Then
    \begin{align}\label{eq:H2}
        J_2 \coloneqq  \sum_{k=1} ^q \|z^k_\alpha\|_2^2 \ ;
      \end{align}
     \item  \emph{$\mc{H}_\infty$ index}: Let 
     $\bar x:\R_{\geq 0} \rightarrow \R^{n}$ and $\bar z_\alpha:\R_{\geq 0} \rightarrow \R^{p}$  state and modified output trajectories generated with 
     an arbitrary disturbance $\bar{\dist}\in \mc{L}_2$. We define
        \begin{align}\label{eq:Hinf}
        J_\infty =  \sup_{0 \neq \bar \dist \in \mc{L}_2} \frac{\|\bar z_\alpha\|_2^2}{\|\bar \dist\|_2^2}.
     \end{align}
\end{itemize}
Intuitively, if $N=1$, the definitions recover the standard $\H_2$ and $\H_\infty$ performance indices for \gls{LTI} systems. For switched linear systems, the quantities $J_2$ and $J_\infty$ were defined analogously in the literature, but in terms of the output $z$ \cite{Geromel:Colaneri:Bolzern:TAC:2008,Deaecto:Geromel:Hinf:ASME:2010}. In fact, these works only consider Carathéodory solutions. The following two propositions fill this gap, by considering the  modified output $z_\alpha$ and by refining the proof of \cite[Th.~3]{Geromel:Colaneri:Bolzern:TAC:2008}, \cite[Th.~2]{Deaecto:Geromel:Hinf:ASME:2010} to account for the presence of sliding motions. 
% Let us  note that, in contrast to the \gls{LTI} case, the quantities  \eqref{eq:H2}-\eqref{eq:Hinf} are hard to compute, and in practice suitable upper bounds must be employed \cite{Antunes:Heemels:LQRswitched:TAC2017,Deaecto:Geromel:Hinf:ASME:2010}. 

\setpropositiontag{A2}
\begin{proposition}[$\H_2$ control]\label{prop:H2}
    If there exist  $\{P_i\succ 0\}_{\i}$ and  $\Pi=[\pi_{i,j}]_{i,j \in \mc{I}}  \in \M$ satisfying the Lyapunov-Metzler inequalities \eqref{eq:LM_original} with $\{Q_i = C_i^\top C_i\}_\i$, 
    then the switched feedback control law \eqref{eq:switching_control} makes $x^*=0$ globally asymptotically stable for the system \eqref{eq:system_nou2} with $E_i= E$ for all $\i$, and ensures that $J_2 < \min_{\i} \operatorname{tr}(E^\top P_i E  )$. \hfill $\square$
\end{proposition}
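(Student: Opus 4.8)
The plan is to reduce the $\H_2$ index to a statement about autonomous trajectories and then invoke \cref{prop:LMstability}. First I would observe that, since $F_i=0$ and $E_i=E$ for all $\i$, the response to the impulsive disturbance $\dist_k(t)=e_k\delta(t)$ from rest coincides, for $t>0$, with the trajectory of the unforced switched system $\dot x = A_\sigma x$ started from $x(0)=Ee_k$: the impulse simply injects the state jump $Ee_k$ at $t=0$, after which the closed loop evolves autonomously under the min-switching law \eqref{eq:switching_control}. Thus each $z_\alpha^k$ is the modified output along such an autonomous Filippov solution. Since the hypotheses are exactly the Lyapunov--Metzler inequalities \eqref{eq:LM_original} with $Q_i=C_i^\top C_i$, \cref{prop:LMstability} applies and gives both global asymptotic stability and, for each $k$, the bound
\begin{align*}
\textstyle\int_0^\infty x^\top Q_{\alpha(x)} x\,\mathrm{d}t < \min_{\i}\,(Ee_k)^\top P_i (Ee_k),
\end{align*}
where $Q_{\alpha(x)}=\sum_{i\in\mathrm{I}(x)}\alpha_i(x)\,C_i^\top C_i$.

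Next I would connect the integrand on the left to $\|z_\alpha^k\|_2^2$. Pointwise, $z_\alpha=\sum_{i\in\mathrm{I}(x)}\alpha_i(x)C_i x$ with $\alpha(x)\in\Delta$, so convexity of $\xi\mapsto\|\xi\|^2$ (Jensen's inequality) yields
\begin{align*}
z_\alpha^\top z_\alpha = \Bigl\| \textstyle\sum_{i\in\mathrm{I}(x)}\alpha_i(x)C_i x \Bigr\|^2 \le \textstyle\sum_{i\in\mathrm{I}(x)}\alpha_i(x)\,\|C_i x\|^2 = x^\top Q_{\alpha(x)} x.
\end{align*}
Integrating over time and chaining with the previous display gives, for every $k$,
\begin{align*}
\|z_\alpha^k\|_2^2 \le \textstyle\int_0^\infty x^\top Q_{\alpha(x)} x\,\mathrm{d}t < \min_{\i}\,(Ee_k)^\top P_i (Ee_k).
\end{align*}
Outside sliding mode $\mathrm{I}(x)$ is a singleton and this convexity step is an equality; it is precisely in sliding motion that the inequality is strict, which is exactly why the modified output $z_\alpha$ (rather than $z$) must be used, and is the refinement over \cite[Th.~3]{Geromel:Colaneri:Bolzern:TAC:2008}.

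Finally, I would sum over $k=1,\dots,q$ and exchange the minimum with the sum. Using $\sum_k\min_{\i}f_k(i)\le\min_{\i}\sum_k f_k(i)$ together with $\sum_{k=1}^q (Ee_k)^\top P_i (Ee_k)=\operatorname{tr}(E^\top P_i E)$, I obtain
\begin{align*}
J_2 = \textstyle\sum_{k=1}^q \|z_\alpha^k\|_2^2 < \textstyle\sum_{k=1}^q \min_{\i}\,(Ee_k)^\top P_i (Ee_k) \le \min_{\i}\operatorname{tr}(E^\top P_i E),
\end{align*}
which is the claim (strictness being preserved since each summand obeys a strict bound). The main obstacle I anticipate is the sliding-mode bookkeeping in the middle step: one must check that the impulse-to-initial-condition reduction is legitimate for the Filippov solution concept, and that the directional-derivative inequality \eqref{eq:asmsliding} underlying \cref{prop:LMstability} is what licenses evaluating the cost along the convex combination $Q_{\alpha(x)}$. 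The convexity inequality is the technical crux that ensures the modified output stays controlled by the convex combination of the quadratic costs $Q_i=C_i^\top C_i$, and hence by the $P_i$'s, even while the trajectory slides.
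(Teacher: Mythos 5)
Your proposal is correct and follows essentially the same route as the paper's proof: reduce the impulsive response to the autonomous trajectory with $x(0)=Ee_k$, apply \cref{prop:LMstability} with $Q_i=C_i^\top C_i$ to each of the $q$ trajectories, sum over $k$, exchange $\min$ and $\sum$, and identify $\sum_{k=1}^q (Ee_k)^\top P_i (Ee_k)=\operatorname{tr}(E^\top P_i E)$. The one difference is that you make explicit the Jensen step $z_\alpha^\top z_\alpha \le x^\top Q_{\alpha(x)} x$, which the paper writes as an equality (valid only when $\mathrm{I}(x)$ is a singleton, i.e., outside sliding motion); your version is the more careful one, and since the inequality goes in the right direction the final strict bound is unaffected.
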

\begin{pf}
\cref{prop:LMstability} implies stability  and that  $J_2  = \sum_{k=1}^q \int_{t=0} ^\infty  x_k^\top Q_\alpha x_k \  \text{d}t
     < \textstyle \sum_{k=1}^q \min_{{\i}}   (Ee_k)^\top P_i (Ee_k)
      \leq  
     \min_{{\i}}  
     \sum_{k=1}^q    (Ee_k)^\top P_i (Ee_k)
$; the conclusion follows by definition of $e_k$. \hfill $\blacksquare$
\end{pf}

\setpropositiontag{A3}
\begin{proposition}[$\H_\infty$ control]\label{prop:Hinf}
If there exist  $\{P_i\succ 0\}_{\i}$, a scalar $\rho >0$ and  $\Pi=[\pi_{i,j}]_{i,j \in \mc{I}}  \in \M$ such that  
    \begin{align}\label{eq:Hinfinequality}
     \begin{bmatrix}
           A_i^\top P_i +P_i A_i + 
          \textstyle \sum_{j\in \mc{I}} \pi_{i,j} P_j & P_i E_i & C_i^\top
          \\
          \star & -\rho I & F_i^\top  \\
          \star & \star & -I
     \end{bmatrix} \prec 0,
 \end{align}
  then the switched feedback control law \eqref{eq:switching_control} makes $x^*=0$ globally asymptotically stable for the system \eqref{eq:system_nou2} and ensures that $J_\infty < \rho $. \hfill $\square$
\end{proposition}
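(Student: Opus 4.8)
The plan is to establish, along any closed-loop Filippov solution of \eqref{eq:system_nou2}, the pointwise dissipation inequality
\[
\dot v_{\min}(x,\dot x) + z_\alpha^\top z_\alpha - \rho\,\dist^\top\dist < 0,
\]
and then integrate it, in close analogy with the proofs of \cref{prop:LMstability,prop:H2}. First I would bring \eqref{eq:Hinfinequality} into a form that can be evaluated along trajectories. Taking the Schur complement of \eqref{eq:Hinfinequality} with respect to the bottom-right $-I$ block removes the output row/column and gives, for each mode $i$,
\[
\begin{bmatrix}
A_i^\top P_i + P_i A_i + \textstyle\sum_{j} \pi_{j,i} P_j + C_i^\top C_i & P_i E_i + C_i^\top F_i \\
\star & -\rho I + F_i^\top F_i
\end{bmatrix} \prec 0 .
\]
Evaluating the associated quadratic form at $\col(x,\dist)$ and recognising the output $z_i = C_i x + F_i\dist$ together with the directional derivative $\dot v_i(x,A_ix+E_i\dist) = 2x^\top P_i(A_ix+E_i\dist)$, this is equivalent to the scalar inequality
\[
\dot v_i(x,A_ix+E_i\dist) + x^\top\Big(\textstyle\sum_j \pi_{j,i} P_j\Big)x + z_i^\top z_i - \rho\,\dist^\top\dist < 0
\]
for all $(x,\dist)\ne 0$ and all $i$.

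Next I would pass from this per-mode bound to the composite Lyapunov function $\vmin$ in \eqref{eq:vmin} and the modified output $z_\alpha$, which is precisely where sliding motion has to be handled. Along a solution satisfying \eqref{eq:sliding}--\eqref{eq:asmsliding} in the input-dependent form of footnote~\ref{foot:slidingcondition} (with $A_ix$ replaced by $A_ix+E_i\dist$), linearity of $\dot v_i$ in its second argument yields the chain $\dot v_{\min}(x,\dot x) = \min_{i\in\mathrm{I}(x)}\dot v_i(x,\dot x) \le \dot v_{\sigma(x)}(x,\dot x) \le \sum_{i\in\mathrm{I}(x)}\alpha_i(x)\,\dot v_i(x,A_ix+E_i\dist)$. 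I would then weight the scalar inequality above by $\alpha_i(x)\ge 0$ and sum over $i\in\mathrm{I}(x)$, using three facts: (i) since $i\in\mathrm{I}(x)$ forces $x^\top P_i x\le x^\top P_j x$ for all $j$ and $\Pi$ is Metzler, the term $\sum_i\alpha_i\,x^\top(\sum_j\pi_{j,i}P_j)x$ is nonnegative, exactly as in the proof of \cref{prop:LMstability}, so it only strengthens the bound; (ii) as $\{\alpha_i\}_{i\in\mathrm{I}(x)}\in\Delta$ and $w\mapsto w^\top w$ is convex, Jensen's inequality gives $z_\alpha^\top z_\alpha = \big(\sum_i\alpha_i z_i\big)^\top\big(\sum_i\alpha_i z_i\big)\le\sum_i\alpha_i\,z_i^\top z_i$; and (iii) $\sum_{i\in\mathrm{I}(x)}\alpha_i = 1$, so the disturbance terms aggregate to $\rho\,\dist^\top\dist$. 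Together these produce the desired dissipation inequality.

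Finally I would extract stability and the gain bound. Setting $\dist\equiv 0$, the top-left block of the Schur-reduced inequality is exactly \eqref{eq:LM_original} with $Q_i = C_i^\top C_i\succcurlyeq 0$, so global asymptotic stability follows immediately from \cref{prop:LMstability} (cf. \cref{prop:H2}). Integrating the dissipation inequality from $0$ to $\infty$ along a solution with $x(0)=0$, and using $\int_0^\infty\dot v_{\min}(x,\dot x)\,\mathrm{d}t = v_{\min}(x(\infty))-v_{\min}(0)\ge 0$ (because $\vmin\ge 0$), gives $\|z_\alpha\|_2^2 \le \rho\|\dist\|_2^2$ for every $0\ne\dist\in\mc{L}_2$. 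To obtain the strict bound $J_\infty<\rho$, I would note that, since \eqref{eq:Hinfinequality} is strict and there are finitely many modes, the per-mode inequality holds with a uniform margin $\varepsilon>0$, i.e. with its right-hand side bounded by $-\varepsilon\,\dist^\top\dist$; carrying this margin through the integration yields $\|z_\alpha\|_2^2\le(\rho-\varepsilon)\|\dist\|_2^2$, hence $J_\infty\le\rho-\varepsilon<\rho$.

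The main obstacle is the rigorous treatment of sliding motion: the whole argument rests on the modified output $z_\alpha$ and on condition \eqref{eq:asmsliding}, without which neither the chain $\dot v_{\min}\le\sum_i\alpha_i\dot v_i$ nor the convexity bound $z_\alpha^\top z_\alpha\le\sum_i\alpha_i z_i^\top z_i$ would combine into a valid dissipation inequality. The remaining ingredients---the two Schur complements, the Metzler sign argument, and the final integration---are routine and follow the pattern already set in \cref{prop:LMstability,prop:H2}.
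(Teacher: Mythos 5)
Your proof is correct and follows essentially the same route as the paper's: a Schur complement to obtain a per-mode dissipation inequality, the chain $\dot v_{\min}(x,\dot x) \le \dot v_{\sigma(x)} \le \sum_{i}\alpha_i \dot v_i(x,A_ix+E_i\dist)$ via \eqref{eq:asmsliding} combined with the Metzler sign argument, and integration from $x(0)=0$. You additionally spell out two steps the paper leaves implicit---the convexity bound $z_\alpha^\top z_\alpha \le \sum_i \alpha_i z_i^\top z_i$ needed to pass from the per-mode outputs to the modified output, and the uniform-margin argument that upgrades the integrated estimate to the strict bound $J_\infty < \rho$---both of which are correct and make the argument more complete.
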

\begin{pf} Stability holds by \cref{prop:LMstability} and negative definiteness of the upper-left block. As in the proof of \cref{prop:LMstability}, we have for almost all $t \geq 0$ \begin{align*}
    \dot v_{\min}(x,\dot x) &  \leq  \dot v_{\sigma(x)}\left( x, \textstyle \sum_{i\in \mathrm{I}} \alpha_i(x) (A_i x+E_i \dist) \right )
    \\
     & \overset{\textnormal{(a)}}{\leq} \textstyle \sum_{i\in \mathrm{I}(x)}\alpha_i(x) \dot v_i(x,A_ix+E_i \dist)
     \\
    & \overset{\textnormal{(b)}}{<
    } \textstyle \sum_{i\in \mathrm{I}(x)}\alpha_i(x) \left(- z_i^\top z_i + \rho \dist^\top \dist \right) 
\end{align*} 
where (a) is the analogous of condition \eqref{eq:asmsliding} (see \cref{foot:slidingcondition}) and (b) follows via a Schur complement argument by \eqref{eq:Hinfinequality} (see \cite[Eq.~14]{Deaecto:Geromel:Hinf:ASME:2010}). The result follows by integrating, since $\vmin(x(0)) = 0$ and  $\vmin(x(t))\rightarrow 0$ as $t\rightarrow \infty$. \hfill $\blacksquare$
\end{pf}

\section{Proofs}
\vspace{-0.8em}
\subsection{Proof of \cref{lem:strictnonstrict}}\label{app:lem:strictnonstrict}
\vspace{-1em}
We only need to show that feasibility of \eqref{eq:LM1} implies feasibility of \eqref{eq:LM2}. Let $\bar S =(\{\bar P_i\}_{\i},\{\bar K_i\}_{\i}, \bar \Pi,\bar Q)$ be a solution for \eqref{eq:LM1}. We prove that, for all $\i$,  the set $\mc{C}_i^\textrm{cl}\coloneqq\{ (A_i+B_i \bar{K}_i)^\top \mid  (A_i,B_i) \in \C_i \} $ is compact; then  there exists a solution to \eqref{eq:LM2} in any neighborhood of $\bar S$ due to the strict inequality in \eqref{eq:LM1}.
\newline\indent
Since the equation in  \eqref{eq:compatibility_definition} is affine, we can write $\C_i=\{(A_i,B_i)\mid \begin{bmatrix}
  A_i & B_i
\end{bmatrix}^\top =E_i^+ V_i +\begin{bmatrix}
  A_i^0 & B_i^0
\end{bmatrix}^\top,  V_i \in \mc{V}_i, (\bar A_i^0,\bar B_i^0)\in  \C_i^0 
\}$, with $E_i \coloneqq \begin{bmatrix}
  X_i^\top & U_i^\top
\end{bmatrix}$, $E^+$ its pseudoinverse, $\mc{V}_i \coloneqq (\{\dot X_i^\top \} + \mc\noiseset_i^\top  ) \cap \{E_i\alpha \mid  \alpha \in \R^{ (n+m) \times n} \}$, $\C_i^0 = \{(A_i^0,B_i^0)\mid A_i^0X_i +B_i^0 U_i=0\}$ (in simple terms, $E_i^+ V_i$ is a particular solution and $\C_i^0$ are the --disturbance independent-- homogeneous solutions). 
\newline\indent
Consider any $(\bar A_i^0,\bar B_i^0)\in \C_i^0$. We define $\bar A_i^\prime \coloneqq (\bar A_i^0+ \bar B_i^0 \bar{K}_i)^\top\bar A_i^0$, $ \bar B_i^\prime \coloneqq (\bar A_i^0+ \bar B_i^0 \bar{K}_i)^\top\bar B_i^0)$ and note that $(\bar A_i^{\prime}, \bar B_i^{\prime}) \in \C_i^0$.  
We claim that
the symmetric matrix $M \coloneqq( (\bar A_i^\prime+\bar B_i^\prime \bar{K}_i)^\top\bar P_i + \bar P_i(\bar A_i^\prime+\bar B_i^\prime \bar{K}_i) )$ is nilpotent, hence equals $0$. If not, take any $V_i \in \mc V _i$, let $\begin{bmatrix} A_i^\beta & B_i^\beta 
  \end{bmatrix}^\top \coloneqq E_i^+ V_i +\beta \begin{bmatrix} \bar A_i^\prime &\bar B_i^\prime  \end{bmatrix}^\top$,
and note that $ (A_i^\beta,B_i^\beta) \in \C_i $ for all $\beta \in \R$; yet this pair violates \eqref{eq:LM1} for $\beta>0$ or $\beta<0$ large enough, providing a contradiction. 
This also means that the symmetric matrix 
$ \bar A_i^\prime+\bar B_i^\prime \bar{K}_i = (\bar A_i^0+ \bar B_i^0 \bar{K}_i)^\top (\bar A_i^0+ \bar B_i^0 \bar{K}_i)$ is nilpotent, hence equals $0$ (if not, let $\R \ni \lambda \neq 0$ and $\R^n \ni m \neq 0$ be an eigenvalue-eigenvector couple, and note that $ m^\top M m  = 2 \lambda m^\top P_i  m = 0 $ contradicts $P_i \succ 0$). 
% Since $\bar P_i \succ 0$ 
% Therefore,
% $ (\bar A_i^\prime+\bar B_i^\prime \bar{K}_i) \bar P_i^\top = -P_i(\bar A_i^\prime+\bar B_i^\prime \bar{K}_i)$, or equivalently 
% \begin{align}
%     \bar P_i^{-1}(\bar A_i^\prime+\bar B_i^\prime \bar{K}_i)^\top \bar P_i = -(\bar A_i^\prime+\bar B_i^\prime \bar{K}_i).
% \end{align}
% $(\bar A_i^\prime+\bar B_i^\prime \bar{K}_i)$ can only have imaginary eigenvalues (it immediately follows by the Lyapunov theorem, or using a basis of eigenvectors). However,
% $ \bar A_i^\prime+\bar B_i^\prime \bar{K}_i = (\bar A_i^0+ \bar B_i^0 \bar{K}_i)^\top (\bar A_i^0+ \bar B_i^0 \bar{K}_i)$  is also symmetric,  hence has real eigenvalues,  hence must be nilpotent,  hence equals 0.
Therefore, we finally have $\bar A_i^0+\bar B_i^0 \bar{K}_i = 0$. 
\newline\indent
We conclude that 
$\mc{C}_i^\textrm{cl} = \{ \begin{bmatrix} I & \bar K_i^\top 
\end{bmatrix} E_i^+ V_i  \mid V_i \in \mc V_i \}$. The proof follows because $\mc \noiseset_i$ is compact (due to $\Phi_{2,2}^i\prec 0$ in \cref{asm:disturbancemodel}), and so must be $\mc V_i$ and in turn $\mc{C}_i^{\textrm{cl}}$. \hfill $\blacksquare$

\subsection{Proof of \cref{lem:existenceofspecialsol}}\label{app:lem:existenceofspecialsol}
\vspace{-1em}
Let   $M_i \coloneqq  {A_i^{\textnormal cl}}^\top P + P A_i^{\textnormal cl}$. With $\Pi$ as in the statement, $P_i = N P+\mu^{-1} \lambda_ i M_i$,  $Q=0$, and recalling that $\sum_{j\in \I} \pi_{j,i} P=0$,  the left-hand side of \eqref{eq:LM1}  is
\begin{align*}
    \Acli^\top (NP+\tilde \mu \lambda_i M_i)+ \star + \textstyle \sum_{j\in \mc{I}\backslash \{i\} }  {\tilde \lambda_i}  (  \lambda_j M_j -  \lambda_i M_i),
\end{align*}
($\star$ is the transpose of the first addend, and we recall that $\tilde \lambda = \lambda^{-1}$, $\tilde \mu = \mu ^{-1}$), which is negative definite for $\mu$ large enough, as taking its limit $\mu \rightarrow \infty$ gives
\begin{align}
    \Acli^\top P + P \Acli + \lambda_i^{-1} \textstyle \sum_{j \in \mc{I}\backslash \{i\} }\lambda_j  M_j \prec 0
\end{align}
where the inequality is  \eqref{eq:quadratic}. Thus we  constructed a solution to \eqref{eq:LM1} based on \eqref{eq:quadratic}; a solution to \eqref{eq:LM2} with the same $\Pi$, $P_i$'s, and some $Q\succ 0$ then exists as per \cref{lem:strictnonstrict}. \hfill $\blacksquare$

\vspace{-0.7em}

% This proof is intriguing in several ways, when dealing with possibly unbounded sets $\C_i$.   
% First, it bounds \eqref{eq:LM2} away from zero at a solution: for discrete time linear systems, a similar derivation gives an upper bound on the convergence rate in \cite[Th.14]{VanWaardeEtal_Noisy_TAC2022}. Secondly, it extends the proof of \cite[Lem.~15]{VanWaardeEtal_TAC2020} to the noisy case and shows that $\operatorname{ker}\begin{bmatrix}
%   X_i^\top & U_i^\top \end{bmatrix} \subseteq \operatorname{ker} \begin{bmatrix}
%     I & \bar K_i
% \end{bmatrix}$. As in \cite[Th.~16]{VanWaardeEtal_Noisy_TAC2022}, this implies that, even in the noisy case and for an arbitrary solution $\bar S $ of \eqref{eq:LM1},  $\bar K_i = U_i H_i$ for some $H_i$ such that $X_i H_i = I$ --  without  any rank assumption on $X_i$, $U_i$.  One consequence is that existence of a solution is 

%
% \marginJC{Check out the list of references for consistency: things I saw (probably more); missing authors in [9], some references have page numbers (e.g., [41]) and others do not (e.g., [47]). }
%
%\IEEEtriggeratref{16}

% \bibliographystyle{plain} 
\bibliographystyle{abbrv}
\bibliography{DDC_switched}

% \printbibliography

\end{document}